\newtheorem{theorem}{Theorem}[section] %[section] here insures the start of each section has sets the theorem counter back to 1. 
\newtheorem{lemma}[theorem]{Lemma}
\newtheorem{corollary}[theorem]{Corollary}
\newtheorem{proposition}[theorem]{Proposition}
\newtheorem{question}[theorem]{Question}
\theoremstyle{definition}
\newtheorem{example}[theorem]{Example}
\theoremstyle{definition}
\newtheorem{definition}[theorem]{Definition}
\theoremstyle{plain}
\theoremstyle{definition}
\newtheorem{remark}[theorem]{Remark}
\newcommand{\LT}[1]{\left\langle LT(#1)\right\rangle}
\newcommand{\X}{\textbf{x}^\alpha}
\newcommand{\Y}{\textbf{x}^\beta}
\newcommand{\IAD}{I_h^{AD}}
\DeclareMathOperator{\height}{ht}			% Federico's proof uses this operator
\newcommand\ackname{Acknowledgements}
  \newenvironment{acknowledgements}{%
      \titlepage
      \null\vfil
      \@beginparpenalty\@lowpenalty
      \begin{center}%
        \bfseries \ackname
        \@endparpenalty\@M
      \end{center}}%
     {\par\vfil\null\endtitlepage}
  \newenvironment{acknowledgements}{%
      \if@twocolumn
        \section*{\abstractname}%
      \else
        \small
        \begin{center}%
          {\bfseries \ackname\vspace{-.5em}\vspace{\z@}}%
        \end{center}%
        \quotation
      \fi}
      {\if@twocolumn\else\endquotation\fi}
\begin{document}

\title{Generalizing Tanisaki's ideal via ideals of truncated symmetric functions}

\author{Aba Mbirika}
\address{Department of Mathematics, Bowdoin College, 8600 College Station, Brunswick, Maine, 04011-8486, U.S.A.}
\email{ambirika@bowdoin.edu}
\urladdr{\url{http://www.bowdoin.edu/~ambirika}}
\thanks{JT is partially supported by NSF grant DMS-0801554 and a Sloan Research Fellowship.}

 \author{Julianna Tymoczko}
 \address{Department of Mathematics and Statistics, Smith College, Clark Science Center, Northampton, Massachusetts, U.S.A.}
 \email{jtymoczko@smith.edu}
\urladdr{\url{http://www.math.smith.edu/~jtymoczko/}}

%%%%%%%%%%%%%%%%%%%%%%%%%%%%%%%%%%%%%%%%%%%%%%%%%%%%%%%%%%%%%%%%%%%%%
%%%%%%%%%%%%%%%%%%%%%%%%%%%%%%%%%%%%%%%%%%%%%%%%%%%%%%%%%%%%%%%%%%%%%
%%  Not sure if the code below should appear.  It wasn't in
%%  version 1, but it's in the final version of the JACO paper.
%%%%%%%%%%%%%%%%%%%%%%%%%%%%%%%%%%%%%%%%%%%%%%%%%%%%%%%%%%%%%%%%%%%%%
%%%%%%%%%%%%%%%%%%%%%%%%%%%%%%%%%%%%%%%%%%%%%%%%%%%%%%%%%%%%%%%%%%%%%

\keywords{} 
\subjclass[2000]{Primary: 05E05; Secondary: 14M15, 05E10}

\date{\today}

\begin{abstract}
We define a family of ideals $I_h$ in the polynomial ring $\mathbb{Z}[x_1,\ldots,x_n]$ that are parametrized by Hessenberg functions $h$ (equivalently Dyck paths or ample partitions).  The ideals $I_h$ generalize algebraically a family of ideals called the Tanisaki ideal, which is used in a geometric construction of permutation representations called Springer theory.  To define $I_h$, we use polynomials in a proper subset of the variables $\{x_1,\ldots,x_n\}$ that are symmetric under the corresponding permutation subgroup.  We call these polynomials {\em truncated symmetric functions} and show combinatorial identities relating different kinds of truncated symmetric polynomials.  We then prove several key properties of $I_h$, including that if $h>h'$ in the natural partial order on Dyck paths then $I_{h} \subset I_{h'}$, and explicitly construct a Gr\"{o}bner basis for $I_h$.  We use a second family of ideals $J_h$ for which some of the claims are easier to see, and prove that $I_h = J_h$.  The ideals $J_h$ arise in work of Ding, Develin-Martin-Reiner, and Gasharov-Reiner on a family of Schubert varieties called partition varieties.  Using earlier work of the first author, the current manuscript proves that the ideals $I_h = J_h$ generalize the Tanisaki ideals both algebraically and geometrically, from Springer varieties to a family of nilpotent Hessenberg varieties.
\end{abstract}

\maketitle

\setcounter{tocdepth}{1}
\tableofcontents

%%%%%%%%%%%%%%%%%%%%%%%%%%%%%%%%%%%%%%%%%%%%
%%%%%%%%%%%%%%%%%%%%%%%%%%%%%%%%%%%%%%%%%%%%
%%  Sect.1 on Introduction
%%%%%%%%%%%%%%%%%%%%%%%%%%%%%%%%%%%%%%%%%%%%
%%%%%%%%%%%%%%%%%%%%%%%%%%%%%%%%%%%%%%%%%%%%

\section{Introduction}
\label{intro}

Symmetric functions are polynomials in $\mathbb{Z}[x_1, x_2, \ldots, x_n]$ that are fixed by the natural action of the permutation group $S_n$ on the variables $\{x_1, x_2, \ldots, x_n\}$.  They are fundamental objects in many fields of mathematics; for instance, surveys by Macdonald, Stanley, and Fulton describe symmetric functions from algebraic~\cite{Mac95}, combinatorial~\cite{RS2}, and geometric perspectives~\cite{Fult97}, including applications to Schubert calculus and geometric representation theory.

We study functions that are symmetric in a subset of the variables $\{x_1, \ldots, x_n\}$, which we call {\em truncated symmetric functions}.\footnote{Biagioli-Faridi-Rosas call these functions {\em partially symmetric functions} \cite{Biag07}, \cite{Biag08}; we avoid this terminology because it refers to something slightly different in computer science.}  The idea of truncated symmetric functions arises naturally in Schubert calculus, for instance in the {\em stability} property of Schubert polynomials and in the quantum cohomology of flag varieties (e.g.,~\cite{FomKir96}, \cite{FGP97}, \cite{Pos99}); they also appear in Springer theory, described below (e.g.,~\cite{dCP}, \cite{Tani}, \cite{Biag08}).  Our main concerns are the {\em truncated elementary symmetric functions} $e_d(x_1, \ldots, x_r)$, defined  as the sum of all squarefree monomials of degree $d$ in the variables $x_1,\ldots,x_r$, and the {\em truncated complete symmetric functions} $\tilde{e}_d(x_{r'}, x_{r'+1}, \ldots, x_n)$, defined as the sum of all monomials (not necessarily squarefree) of degree $d$ in $x_{r'},\ldots,x_n$.  We prove a number of identities involving elementary and complete symmetric functions in Section~\ref{sec:alg_section}, and a remarkable identity relating truncated elementary symmetric functions to truncated complete symmetric functions in Proposition~\ref{prop:crucial_identity}.

Our motivation comes from algebraic geometry and representation theory: we generalize an important family of ideals of symmetric functions called {\em Tanisaki ideals}.  Springer theory is a key example of geometric representation theory that constructs representations of the symmetric group $S_n$ on the cohomology of a family of varieties parametrized by partitions~\cite{Spr}.  Kraft conjectured that the cohomology ring of the Springer variety for the partition $\lambda$ was a particular quotient $\mathbb{Z}[x_1,\ldots,x_n]/\mathcal{I}_{\lambda}$ of the polynomial ring~\cite{Kra80}.  De Concini and Procesi proved Kraft's conjecture by constructing the ideals $\mathcal{I}_\lambda$ explicitly~\cite{dCP}.  Tanisaki simplified the ideals $\mathcal{I}_{\lambda}$ that appear in this quotient~\cite{Tani}; they are now called Tanisaki ideals.  Unlike earlier constructions of Springer's representations, the $S_n$-action is transparent in this presentation: it is simply the natural $S_n$-action on the variables of $\mathbb{Z}[x_1, \ldots, x_n]$, extended to the quotient $\mathbb{Z}[x_1,\ldots,x_n]/\mathcal{I}_{\lambda}$ because $\mathcal{I}_{\lambda}$ is symmetric.  

Springer varieties can be generalized to a two-parameter family of varieties called {\em Hessenberg varieties}~\cite{dMPS}, defined by a partition $\lambda$ and a certain step function $h$ (or equivalently, a Dyck path).  Examples of Hessenberg varieties have been studied in different contexts: quantum cohomology~\cite{Kos96}, \cite{Rie03}, combinatorics~\cite{Ful99}, \cite{Ive06}, geometry~\cite{BriCar04}, and topology~\cite{Tym06}, \cite{HarTym}.  Yet remarkably little is known about them: for instance, outside of special cases like the Springer varieties or the Peterson variety~\cite{BriCar04}, \cite{HarTym}, \cite{HarTym2}, their cohomology ring is unknown.

In this paper, we extend the algebraic and combinatorial approach of Kraft, De Concini-Procesi, and Tanisaki from the Springer varieties to the {\em regular nilpotent Hessenberg varieties}, namely the Hessenberg varieties corresponding to the partition $(n)$.  Regular nilpotent Hessenberg varieties have been studied extensively~\cite{Kos96}, \cite{Rie03}, \cite{BriCar04}, \cite{Tym07}, \cite{HarTym}.  Section~\ref{sec:I_ideals} uses truncated elementary symmetric functions to build a family of ideals $I_h$ that are parametrized by $h$ and that correspond to regular nilpotent Hessenberg varieties in a sense made precise later.  We call each $I_h$ a  \textit{generalized Tanisaki ideal} because we modeled the construction of $I_h$ on Biagioli, Faridi, and Rosas's construction of the Tanisaki ideal, in the case when the partition $\lambda = (n)$~\cite{Biag08}.   Unlike the Tanisaki ideals, $I_h$ are not symmetric and so $\mathbb{Z}[x_1, x_2, \ldots, x_n]/I_h$ does not carry an obvious $S_n$-action.  However, we prove a number of powerful properties satisfied by the ideals $I_h$:
\begin{enumerate}
\item If $h > h'$ is a natural partial order on step functions, then $I_h \subset I_{h'}$ (see Theorem~\ref{thm:I_ideal_containment}).  A stronger condition sometimes holds: in some cases, the generators $\mathfrak{C}_h$ of $I_h$ are actually a subset of the generators $\mathfrak{C}_{h'}$ of $I_{h'}$.  We prove that the set of pairs $h > h'$ for which the generators satisfy $\mathfrak{C}_h \subset \mathfrak{C}_{h'}$ induces a spanning subgraph of the poset on Hessenberg functions $h$ (see Corollary~\ref{cor:All_Paths_Lead_To_Rome}).
\item We identify a reduced generating set for the ideal $I_h$ consisting of $n$ truncated elementary symmetric functions (see Theorem~\ref{thm:min_gen_set_for_I}).  Galetto recently proved that this reduced generating set is in fact minimal~\cite{Galetto}.  We give his proof in Appendix~\ref{app:minimal_gen_proof}.
\item We identify a Gr\"{o}bner basis for $I_h$ consisting of $n$ truncated complete symmetric functions, with respect to two term orders (see Theorem~\ref{thm:J_ideal_Grob}).  This generalizes the case when $h=(n,n,\ldots,n)$, namely when $\mathbb{Q}[x_1,\ldots,x_n]/I_h$ is the cohomology of the full flag variety, which is due in various incarnations to Cauchy, Valibouze, and Mora-Sala~\cite{AC},\cite{AV},\cite{MS}.
\item We identify a monomial basis for $\mathbb{Q}[x_1,\ldots,x_n]/I_h$ (see Theorem~\ref{thm:Basis_for_R/J}).  
\end{enumerate}
To prove points (3) and (4), we construct an entirely new family of ideals $J_h$ that we call \textit{Hessenberg basis ideals}.  We then prove in Section~\ref{I=J} the main theorem of this paper: Theorem~\ref{thm:I_h=J_h}, which says that $I_h=J_h$ for each $h$.  

In earlier work, the first author proved that the quotient $\mathbb{Q}[x_1,\ldots,x_n]/J_h$ is isomorphic as a $\mathbb{Q}$-vector space to the cohomology of the regular nilpotent Hessenberg variety for $h$~\cite{Mb1}.  In this sense the ideals $I_h=J_h$ generalize the Tanisaki ideals {\em geometrically} as well as combinatorially.

Even earlier, Ding, Gasharov-Reiner, and Develin-Martin-Reiner studied ideals $J_h'$ that agree with $J_h$ up to the change of variable $x_i \leftrightarrow x_{n-i+1}$.  Ding proved that the cohomology ring  of  {\em partition varieties}, a family of smooth Schubert varieties parametrized by $h$, was isomorphic as an additive group to $\mathbb{Z}[x_1,\ldots,x_n]/J_h'$~\cite{Din}.   Gasharov-Reiner proved that in fact the cohomology was ring-isomorphic to $\mathbb{Z}[x_1,\ldots,x_n]/J_h'$~\cite{GasRei}, \cite{DevMarRei}.  (The change of variables corresponds to a natural homeomorphism of the flag variety to itself that sends partition varieties to another family of smooth Schubert varieties, whose cohomology rings are exactly $\mathbb{Z}[x_1,\ldots,x_n]/J_h$.) While this article was going to press, we learned that in fact Gasharov-Reiner constructed the same Gr\"{o}bner basis that we give here for a family of ideals that includes $J_h'$ as a special case~\cite[Remark 3.3]{GasRei}.  

This leads to the natural conjecture that regular nilpotent Hessenberg varieties are partition varieties---natural, yet false, since recent work of Insko and Yong identifies the singular locus of many regular nilpotent Hessenberg varieties~\cite{InsYon}.  In particular, Hessenberg varieties are typically singular while partition varieties are always smooth.  However, we conjecture that the cohomology of the regular nilpotent Hessenberg variety is ring-isomorphic to the cohomology of the corresponding partition variety.

Throughout this paper, our methods are purely algebraic and combinatorial.

%%%%%%%%%%%%%%%%%%%%%%%%%%%%%%%%%%%%%%%%%%%%
%%%%%%%%%%%%%%%%%%%%%%%%%%%%%%%%%%%%%%%%%%%%
%%  Sect.2 on Definitions   ---   Contains 3 Subsections
%%%%%%%%%%%%%%%%%%%%%%%%%%%%%%%%%%%%%%%%%%%%
%%%%%%%%%%%%%%%%%%%%%%%%%%%%%%%%%%%%%%%%%%%%

\section{Combinatorial preliminaries}\label{sec:prelim_definitions}

In this section, we define the key combinatorial objects of this paper: a family of nondecreasing step functions called {\em Hessenberg functions}.  Proposition~\ref{prop:one_to_one_correspondences} establishes bijections between Hessenberg functions and several other combinatorial objects, both classical (like Dyck paths) and not.  Section~\ref{sec:Hess_poset} describes a natural poset on Hessenberg functions together with its basic properties.  We describe our geometric motivation in Section~\ref{section:geom_prelims}.

\subsection{Hessenberg functions and degree tuples}  We fix a positive integer $n$ and the polynomial ring $\mathbb{Z}[x_1,\ldots,x_n]$ once and for all.  We identify maps from $\{1,2,\ldots,n\} \rightarrow \mathbb{Z}$ with elements of $\mathbb{Z}^n$, as in the next definition.

\begin{definition}[Hessenberg function]\label{def:Hess_fcn}
Let $h$ be a map 
\[h: \{1,\ldots,n\} \rightarrow \{1,\ldots,n\}\] 
and let $h_i=h(i)$ denote the image of $i$ under $h$.  We say that an $n$-tuple $h=(h_1,\ldots,h_n)$ is a \textit{Hessenberg function} if it satisfies two structure rules:
\[\begin{array}{lll}
(a) & i \leq h_i \leq n  & \textup{  for all }i\in\{1,\ldots,n\}, \textup{ and}\\
(b) & h_i \leq h_{i+1}  & \textup{  for all }i\in\{1,\ldots,n-1\}.
\end{array}\]
\end{definition}

We will show that the set of Hessenberg functions are in natural bijection with the well-known {\em Dyck paths}, as well as with combinatorial objects called {\em degree tuples} that we introduce below.  We recall the definition of a Dyck path (rotating standard conventions $90^{\circ}$ clockwise).

\begin{definition}[Dyck path] Consider the square $\{(x,y): 0 \leq x,y \leq n\}$ in the plane.  A \textit{Dyck path} is any path in the square from $(0,n)$ to $(n,0)$ that 
\begin{enumerate}
\item[(a)] lies strictly below the antidiagonal $y=-x+n$, and 
\item[(b)] is a lattice path, i.e., consists of vertical and horizontal segments of length one.
\end{enumerate}
\end{definition}

We now define degree tuples, named because they index the degrees of certain truncated symmetric functions constructed in Section~\ref{sec:J_ideals}.  Note that the entries of the degree tuple are listed from $\beta_n$ to $\beta_1$.  (This convention will be convenient in Section~\ref{sec:J_ideals}.)

\begin{definition}[Degree tuple]\label{def:deg_tuples}
Let $\beta$ be a map $\beta: \{1,\ldots,n\} \rightarrow \{1,\ldots,n\}$ and denote the image of $i$ under $\beta$ by $\beta_i=\beta(i)$.  We say that an $n$-tuple $\beta=(\beta_n,\beta_{n-1},\ldots,\beta_1)$ is a \textit{degree tuple} if it satisfies two structure rules:
\[\begin{array}{lll}
(a') & 1 \leq \beta_i \leq i & \textup{  for all }i\in\{1,\ldots,n\}, \textup{ and}\\
(b') & \beta_i - \beta_{i-1} \leq 1 & \textup{  for all }i\in\{2,\ldots,n\}.
\end{array}\]
\end{definition}

For us, a {\em partition} $\lambda = (\lambda_1,\ldots,\lambda_n)$ satisfies $n \geq \lambda_1\geq\ldots\geq\lambda_n\geq0$.  We draw Ferrers diagrams flush right and top.  For example if $n=3$, then:
\[\lambda=(3,1,0) \longleftrightarrow
\setlength{\unitlength}{.15in}
\begin{picture}(3,3)(0,0.75)
\linethickness{.2pt}
\multiput(0,0)(1,0){4}{\line(0,1){3}}
\multiput(0,0)(0,1){4}{\line(1,0){3}}
\multiput(.5,2.5)(1,0){3}{\makebox(0,0){$\blacksquare$}}
\put(2.5,1.5){\makebox(0,0){$\blacksquare$}}
\end{picture}
\]

\medskip

We remind the reader of some standard definitions involving partitions.

\begin{definition}[Staircase partition, ample partition, conjugate of a partition] The {\em staircase partition} is defined to be $\rho=(n,n-1,\ldots,1)$.  The partition $\lambda = (\lambda_1,\ldots,\lambda_n)$ is \textit{ample} if  $\rho\subseteq\lambda$, that is, if $\lambda_i \geq n-i+1$ for each $i$.  If $\lambda = (\lambda_1,\ldots,\lambda_n)$ is a partition, the \textit{conjugate} $\lambda'$ of $\lambda$ is given by $\lambda_i' = \#\{k:\lambda_k\geq i\}$.  
\end{definition}

With our convention for Ferrers diagrams, the conjugate $\lambda'$ is the reflection of $\lambda$ across the antidiagonal line.  The next lemma follows from this characterization.

\begin{lemma}\label{lem:ample_partition_closure} The set of ample partitions is closed under conjugation.
\end{lemma}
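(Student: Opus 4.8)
The plan is to translate the statement into the language of Ferrers diagrams and exploit the characterization of ``ample'' given just before the lemma, namely that $\lambda$ is ample precisely when the staircase partition $\rho = (n, n-1, \ldots, 1)$ fits inside $\lambda$, i.e. $\rho \subseteq \lambda$. The key fact I would use is the one highlighted in the sentence preceding the lemma: with the conventions for Ferrers diagrams used here, conjugation is reflection across the antidiagonal, and crucially the staircase partition $\rho$ is \emph{symmetric} under this reflection, that is $\rho' = \rho$.

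\begin{proof}[Proof sketch]
Let $\lambda$ be an ample partition, so that $\rho \subseteq \lambda$ as Ferrers diagrams. Conjugation is order-preserving with respect to containment of Ferrers diagrams: if $\mu \subseteq \nu$ then $\mu' \subseteq \nu'$ (this is immediate from the formula $\mu_i' = \#\{k : \mu_k \geq i\}$, since enlarging a partition can only increase each such count). Applying this to $\rho \subseteq \lambda$ gives $\rho' \subseteq \lambda'$. But the staircase partition satisfies $\rho' = \rho$: indeed $\rho_i' = \#\{k : \rho_k \geq i\} = \#\{k : n-k+1 \geq i\} = \#\{k : k \leq n-i+1\} = n-i+1 = \rho_i$. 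Hence $\rho \subseteq \lambda'$, which is exactly the statement that $\lambda'$ is ample. Therefore the conjugate of any ample partition is again ample.
\end{proof}

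The only mild subtlety is making sure the monotonicity of conjugation under inclusion is stated for the correct convention of Ferrers diagrams (flush right and top, as fixed in the excerpt); but containment of the underlying sets of boxes is convention-independent, and conjugation as ``reflection across the antidiagonal'' is a bijection on boxes that respects this containment, so no real obstacle arises. If one prefers a purely numerical argument avoiding diagrams, one can instead argue directly: $\lambda$ ample means $\lambda_i \geq n-i+1$ for all $i$, and one checks from the conjugate formula that $\lambda_i' = \#\{k : \lambda_k \geq i\} \geq n - i + 1$ by exhibiting $n-i+1$ values of $k$ with $\lambda_k \geq i$ — namely $k = 1, \ldots, n-i+1$, since for such $k$ we have $\lambda_k \geq n-k+1 \geq i$. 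This is essentially the same computation unwound, and it is the natural thing to write if the preceding ``reflection across the antidiagonal'' remark is deemed too informal to cite. Either way the argument is short; the main point is simply recognizing that $\rho$ is self-conjugate and that conjugation is monotone.
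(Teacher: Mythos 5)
Your argument is correct and expands on exactly the idea the paper gestures at: the paper simply asserts that the lemma ``follows from'' the characterization of conjugation as reflection across the antidiagonal, and you supply the missing details (self-conjugacy of $\rho$ and monotonicity of conjugation under containment). This is the same approach, made explicit.
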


The next definition applies to partitions, Hessenberg functions, and degree tuples.

\begin{definition}[Reverse tuple]  If $t=(t_1,\ldots,t_n) \in \mathbb{Z}^n$ then the \textit{reverse of $t$} is
\[t^{rev} = (t_n,t_{n-1},\ldots,t_1).\]
\end{definition}

The main result of Section~\ref{sec:prelim_definitions} follows; it proves bijections between these objects.

\begin{proposition}\label{prop:one_to_one_correspondences} The following sets are in bijective correspondence:
$$\begin{array}{ll}
\mbox{(1)} & \mbox{The set of ample partitions.}\\
\mbox{(2)} & \mbox{The set of Hessenberg functions.}\\
\mbox{(3)} & \mbox{The set of degree tuples.}\\
\mbox{(4)} & \mbox{The set of Dyck paths.}
\end{array}$$
\end{proposition}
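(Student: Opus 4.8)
The plan is to establish the bijections by exhibiting explicit maps between the four sets and checking that they are mutually inverse, using the structural inequalities in the definitions to verify that each map lands in the intended target set. Rather than prove all $\binom{4}{2}$ equivalences directly, I would prove a cycle of bijections, say $(1) \to (2) \to (3) \to (4) \to (1)$, so that composing gives the rest. Since the statement only asserts the existence of bijections, it suffices to write down the maps and confirm bijectivity; the geometric or representation-theoretic meaning is deferred to later sections.

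\medskip

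The key steps, in order, are as follows. First, from an ample partition $\lambda$ to a Hessenberg function: I would set $h_i = \lambda'_{n-i+1}$ (or a close variant), using the conjugate. One checks that ampleness of $\lambda$, i.e. $\rho \subseteq \lambda$, translates via conjugation (Lemma~\ref{lem:ample_partition_closure} and the antidiagonal-reflection characterization) into the inequalities $i \leq h_i$, while the weakly-decreasing nature of the partition $\lambda'$ becomes the monotonicity $h_i \leq h_{i+1}$, and $h_i \leq n$ is immediate. Second, from a Hessenberg function $h$ to a degree tuple $\beta$: the natural guess is $\beta_i = i - h_{?} + \text{(something)}$; concretely I expect $\beta_i$ to count, for each column/index, how far $h$ falls short, something like $\beta_i = \#\{j \le i : h_j \ge i\}$ or $\beta_i = i - (\text{number of } j \text{ with } h_j < i)$. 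The point is that condition (a) $i \le h_i$ forces $\beta_i \ge 1$, the bound $h_i \le n$ gives $\beta_i \le i$, and the "staircase" condition $h_i \le h_{i+1}$ translates into the slope restriction $\beta_i - \beta_{i-1} \le 1$. Third, from a degree tuple $\beta$ to a Dyck path: read the tuple $(\beta_n,\dots,\beta_1)$ as instructions for a lattice path from $(0,n)$ to $(n,0)$, where $\beta_i$ records how many steps the path sits below the antidiagonal at horizontal coordinate $n-i$; conditions $(a')$ and $(b')$ are exactly what is needed for the path to stay strictly below $y=-x+n$ and to be a unit lattice path. Fourth, from a Dyck path back to an ample partition: the region between the Dyck path and the antidiagonal is (a shift of) a Young diagram, and the complementary region inside the $n\times n$ square is the Ferrers diagram of an ample partition; this closes the cycle. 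At each arrow I would also exhibit the inverse map explicitly and note that the round-trip is the identity, which is a routine unwinding of the formulas.

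\medskip

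The main obstacle is bookkeeping with the index reversals and the off-by-one shifts: the degree tuple is indexed in decreasing order $(\beta_n,\ldots,\beta_1)$, the Ferrers diagrams are drawn flush right and top (a nonstandard convention), and the Dyck path convention is rotated $90^\circ$, so getting every inequality to match without sign or index errors requires care. The conceptually delicate point is verifying that the strict inequality "lies strictly below the antidiagonal" for Dyck paths corresponds precisely to ampleness $\rho \subseteq \lambda$ (with $\rho$ the full staircase) rather than to a weaker containment — this is where the precise form of conditions (a) and $(a')$ earns its keep. I would handle this by carrying a small running example (e.g. $n=3$) alongside the general argument so that the reader — and I — can sanity-check the correspondence at each stage. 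Everything else is a direct translation of one list of inequalities into another.
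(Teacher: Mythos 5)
Your proposal is sound and would succeed, but it takes a structurally different route from the paper. The paper uses a \emph{hub-and-spoke} pattern: it fixes ample partitions as a hub and shows that (2), (3), and (4) are each in bijection with (1), via three very short observations. For (2)$\leftrightarrow$(1) the paper simply reverses the tuple, $h \mapsto h^{\mathrm{rev}}$, and observes that rule $(b)$ of Hessenberg functions is precisely the weakly-decreasing condition making $h^{\mathrm{rev}}$ a partition, while rule $(a)$ is precisely ampleness; no conjugation is used. For (3)$\leftrightarrow$(1) the paper uses the affine shift $\beta_i = \lambda_i + i - n$, under which $(b')$ becomes ``$\lambda$ is a partition'' and $(a')$ becomes ampleness. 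For (4)$\leftrightarrow$(1) the paper notes that a partition's boundary path is a Dyck path exactly when the partition is ample. Your approach instead proves a \emph{cycle} $(1)\to(2)\to(3)\to(4)\to(1)$, and your proposed maps are genuinely different: you conjugate in step $(1)\to(2)$ (where the paper only reverses), and your $(2)\to(3)$ formula $\beta_i = i - \#\{j : h_j < i\}$ is the \emph{composed} map $F$ that the paper introduces only after the proposition (see Lemma~\ref{lemma:simple_formula_to_compute_beta}), rather than a single clean arrow. Both routes are valid; the paper's is shorter because each spoke is essentially a one-line translation of inequalities, whereas a cycle forces you either to verify invertibility at every arrow or to check that the round-trip is the identity, and your $(3)\to(4)$ description in particular would need to be made precise before that check could be carried out. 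Your self-diagnosis of the hazards (index reversals, the off-by-one shifts, the strictness of ``below the antidiagonal'' matching $\rho \subseteq \lambda$) is accurate, and those are exactly the places where the paper's choice of separate spokes with very simple maps pays off.
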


\begin{proof}
We prove that each of sets (2), (3), and (4) is in bijection with ample partitions.  Let $h=(h_1,\ldots,h_n) \in \{1,2,\ldots,n\}^n$ and let $h^{rev}=(h_n,h_{n-1},\ldots,h_1)$ be the reverse of $h$.  The map $h$ satisfies rule ($b$) in the definition of Hessenberg functions exactly when $h^{rev}$ is a partition, and satisfies rule ($a$) precisely when this partition $h^{rev}$ is ample.

Let $\lambda = (\lambda_1,\ldots,\lambda_n) \in \{1,2,\ldots,n\}^n$ and define $\beta=(\beta_n,\beta_{n-1},\ldots,\beta_1)$ by 
\[\beta_i = \lambda_i-\rho_i+1 = \lambda_i+i-n.\]  
Observe that $\beta$ satisfies rule ($b'$) in the definition of degree tuples exactly when $\lambda$ is a partition, and satisfies rule ($a'$) precisely when this partition $\lambda$ is ample.

Finally, the \textit{boundary path} of a partition is the path between the partition and its complement in the $n$-by-$n$ square.  A partition is determined by its boundary path.  By definition a boundary path is a Dyck path precisely when its corresponding partition is ample.
\end{proof}

Using Lemma~\ref{lem:ample_partition_closure} and Proposition~\ref{prop:one_to_one_correspondences} we can define a bijective map from Hessenberg functions to degree tuples.  Let $1$ denote the partition $(1,\ldots,1)$ and let $\rho$ be the staircase partition, as usual.  We define a composition of bijections $F$ as follows:
$$F:h \longmapsto h^{rev} \longmapsto \left(h^{rev}\right)' \longmapsto \left(h^{rev}\right)'-\rho+1 \longmapsto \left(\left(h^{rev}\right)'-\rho+1\right)^{rev}.$$
The map $F$ takes a Hessenberg function to a degree tuple.  

\begin{corollary}\label{cor:hess_fcns_biject_with_degree_tuples}
The map $F$ is a bijection between Hessenberg functions and degree tuples.
\end{corollary}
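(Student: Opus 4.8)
The plan is to recognize $F$ as a composition of four elementary maps and to check that each one restricts to a bijection between the appropriate pair of sets, using only Proposition~\ref{prop:one_to_one_correspondences} (together with its proof) and Lemma~\ref{lem:ample_partition_closure}.

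First I would note that the reversal $t \mapsto t^{rev}$ is an involution of $\mathbb{Z}^n$, hence a bijection, and that by the proof of Proposition~\ref{prop:one_to_one_correspondences} a tuple $h \in \{1,\dots,n\}^n$ is a Hessenberg function exactly when $h^{rev}$ is an ample partition; so the first arrow of $F$ restricts to a bijection from the set of Hessenberg functions onto the set of ample partitions. Next I would use that conjugation $\lambda \mapsto \lambda'$ is an involution of the set of partitions — in our convention $\lambda_1 \le n$, so $\lambda'$ is again an $n$-tuple with entries in $\{0,\dots,n\}$ — which by Lemma~\ref{lem:ample_partition_closure} restricts to a bijection of the set of ample partitions to itself; in particular $(h^{rev})'$ is again ample, so all its entries lie in $\{1,\dots,n\}$ and the subsequent entrywise construction applies to it.

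Finally I would treat the last two arrows together: $t \mapsto t - \rho + 1$ is a translation and $t \mapsto t^{rev}$ a reversal, both bijections of $\mathbb{Z}^n$, so their composite is a bijection of $\mathbb{Z}^n$. Writing $\lambda := (h^{rev})'$ and $\beta_i := \lambda_i - \rho_i + 1$, the composite output $\left(\lambda - \rho + 1\right)^{rev}$ is precisely the $n$-tuple $(\beta_n,\beta_{n-1},\dots,\beta_1)$ appearing in the proof of Proposition~\ref{prop:one_to_one_correspondences}, and that proof shows this tuple is a degree tuple exactly when $\lambda$ is an ample partition. Hence the last two arrows together restrict to a bijection from the set of ample partitions onto the set of degree tuples. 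Chaining the three observations, $F$ is a composition in which the codomain of each stage equals the domain of the next and each stage is a bijection, so $F$ is a bijection from Hessenberg functions onto degree tuples.

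I do not expect a genuine obstacle here; the only care needed is bookkeeping — keeping the ambient tuples in $\mathbb{Z}^n$, checking that each intermediate ``ample partition'' stage really lands back in $\{1,\dots,n\}^n$ (which is exactly where Lemma~\ref{lem:ample_partition_closure} is used), and matching the $(\beta_n,\dots,\beta_1)$ indexing convention for degree tuples with the output of the final reversal.
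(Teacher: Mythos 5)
Your proposal is correct and follows the same route as the paper: both decompose $F$ into its elementary constituent maps, invoke Proposition~\ref{prop:one_to_one_correspondences} (and, implicitly or explicitly, Lemma~\ref{lem:ample_partition_closure}) to see that each stage restricts to a bijection between the right sets, and read off that the composite is a bijection onto degree tuples. You simply carry out the bookkeeping that the paper's one-line proof leaves to the reader.
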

\begin{proof}
Each map used to define $F$ is bijective and the composition sends Hessenberg functions to degree tuples, both by Proposition~\ref{prop:one_to_one_correspondences}. The last map 
\[\left(h^{rev}\right)'-\rho+1 \mapsto \left(\left(h^{rev}\right)'-\rho+1\right)^{rev}\] 
ensures that the degree tuple has descending subscripts, as in Definition~\ref{def:deg_tuples}.
\end{proof}

It is often helpful to represent $F(h)$ with a diagram, which we use extensively in Section~\ref{sec:Ideal_equality_section}.  The diagram relies on two observations.  First, the map $h \mapsto (h^{rev})'$ produces a partition whose $i^{th}$ column has length $h_i$.  Second, the map $(h^{rev})' \mapsto (h^{rev})'-\rho+1$ essentially erases the strictly upper-triangular portion of the partition.

\begin{definition}[Hessenberg diagram]\label{def:Hess_diagram}
Let $h$ be a Hessenberg function.  For each $i$, shade the first $h_i$ boxes in the $i^{th}$ column of the Ferrers diagram.  Remove the boxes in the partition $(n-1,n-2,\ldots,1,0)$.  The diagram that remains is the \textit{Hessenberg diagram} of $h$.
\end{definition}

\begin{example}
The Hessenberg diagram of $h=(3,3,4,4,5,6)$, with its Dyck path, is:

\medskip
\begin{center}\setlength{\unitlength}{.2in}
\begin{picture}(6,6)(0,0)
\linethickness{.2pt}
\put(0,0){\line(1,0){6}}
\put(0,1){\line(1,0){6}}
\put(0,2){\line(1,0){5}}
\put(0,3){\line(1,0){4}}
\put(0,4){\line(1,0){3}}
\put(0,5){\line(1,0){2}}
\put(0,6){\line(1,0){1}}
\put(0,6){\line(1,0){6}}
\put(6,6){\line(0,-1){6}}
\put(0,0){\line(0,1){6}}
\put(1,6){\line(0,-1){6}}
\put(2,5){\line(0,-1){5}}
\put(3,4){\line(0,-1){4}}
\put(4,3){\line(0,-1){3}}
\put(5,2){\line(0,-1){2}}
\multiput(.5,5.5)(0,-1){3}{\makebox(0,0){$\blacksquare$}}
\multiput(1.5,4.5)(0,-1){2}{\makebox(0,0){$\blacksquare$}}
\multiput(2.5,3.5)(0,-1){2}{\makebox(0,0){$\blacksquare$}}
\multiput(3.5,2.5)(0,-1){1}{\makebox(0,0){$\blacksquare$}}
\multiput(4.5,1.5)(0,-1){1}{\makebox(0,0){$\blacksquare$}}
\multiput(5.5,.5)(0,-1){1}{\makebox(0,0){$\blacksquare$}}
\linethickness{1.5pt}
\put(0,6){\line(0,-1){3}}
\put(0,3){\line(1,0){2}}
\put(2,3){\line(0,-1){1}}
\put(2,2){\line(1,0){2}}
\put(4,2){\line(0,-1){1}}
\put(4,1){\line(1,0){1}}
\put(5,1){\line(0,-1){1}}
\put(5,0){\line(1,0){1}}
\linethickness{.2pt}	
\put(.15,6.1){\begin{scriptsize}$h_1$\end{scriptsize}}\put(1.15,6.1){\begin{scriptsize}$h_2$\end{scriptsize}}
\put(2.15,6.1){\begin{scriptsize}$h_3$\end{scriptsize}}\put(3.15,6.1){\begin{scriptsize}$h_4$\end{scriptsize}}
\put(4.15,6.1){\begin{scriptsize}$h_5$\end{scriptsize}}\put(5.15,6.1){\begin{scriptsize}$h_6$\end{scriptsize}}
\put(-.7,.25){\begin{scriptsize}$\beta_6$\end{scriptsize}}\put(-.7,1.25){\begin{scriptsize}$\beta_5$\end{scriptsize}}
\put(-.7,2.25){\begin{scriptsize}$\beta_5$\end{scriptsize}}\put(-.7,3.25){\begin{scriptsize}$\beta_3$\end{scriptsize}}
\put(-.7,4.25){\begin{scriptsize}$\beta_2$\end{scriptsize}}\put(-.7,5.25){\begin{scriptsize}$\beta_1$\end{scriptsize}}
\end{picture}
\end{center}
\end{example}

In the previous example, there are $h_i$ boxes in the $i^{th}$ column between the Dyck path and the top of the square, and $\beta_i$ shaded boxes in row $i$.  This leads to the following observation.

\begin{lemma}[Formula to compute $F(h)$ from $h$]\label{lemma:simple_formula_to_compute_beta}  
If $h=(h_1,\ldots,h_n)$ is a Hessenberg function, then the degree tuple $\beta = F(h)$ is the sequence $\beta = (\beta_n,\beta_{n-1},\ldots,\beta_1)$ where
\[\beta_i = i - \#\{h_k | h_k < i\}.\]
\end{lemma}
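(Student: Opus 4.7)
The plan is to directly unwind the composition of bijections defining $F$ in Corollary~\ref{cor:hess_fcns_biject_with_degree_tuples} and then reduce the claim to the standard combinatorial description of the conjugate partition.

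First, I would set $\lambda = (h^{rev})'$ with standard indexing $\lambda = (\lambda_1,\ldots,\lambda_n)$. The intermediate step of $F$ sends $\lambda$ to $\mu = \lambda - \rho + 1$, whose $i$-th entry is $\mu_i = \lambda_i - (n-i+1) + 1 = \lambda_i + i - n$, exactly as noted in the proof of Proposition~\ref{prop:one_to_one_correspondences}. The final operation of $F$ is the reversal $\mu \mapsto \mu^{rev}$; since the degree-tuple convention writes $\beta = (\beta_n, \beta_{n-1}, \ldots, \beta_1)$ with descending subscripts, a quick bookkeeping check shows that the reversal simply relabels positions so that $\beta_i = \mu_i$ for each $i$. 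Thus $\beta_i = \lambda_i + i - n$.

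Next, I would expand $\lambda_i$ using the definition of the conjugate partition. By definition $\lambda_i = \#\{k : (h^{rev})_k \geq i\}$, and since $(h^{rev})_k = h_{n-k+1}$, the substitution $j = n - k + 1$ gives
\[
\lambda_i \;=\; \#\{j : h_j \geq i\} \;=\; n - \#\{j : h_j < i\}.
\]
Plugging this into $\beta_i = \lambda_i + i - n$ yields
\[
\beta_i \;=\; \bigl(n - \#\{j : h_j < i\}\bigr) + i - n \;=\; i - \#\{h_k : h_k < i\},
\]
which is the claimed formula.

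The main (and essentially only) thing to watch is the index bookkeeping: because $F$ chains together a reversal, a conjugation (which itself swaps the role of rows and columns), a shift by $-\rho + 1$, and another reversal, one must carefully verify that the formula $\beta_i = \lambda_i + i - n$ pairs $\beta_i$ with the $i$-th part of $\lambda$ rather than with $\lambda_{n-i+1}$ or some other reindexing. Once this identification is confirmed, the remainder is a one-line substitution, so there is no genuine obstacle beyond this careful chase of conventions.
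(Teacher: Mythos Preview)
Your proof is correct and follows essentially the same route as the paper: both unwind the composition $F$ to obtain $\beta_i = \#\{k : h_k \geq i\} + i - n$ and then pass to the complement to reach $\beta_i = i - \#\{k : h_k < i\}$. The paper compresses your bookkeeping into a single ``by definition'' and adds a geometric gloss via the Hessenberg diagram, but the underlying computation is identical.
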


\begin{proof}
By definition $\beta_i =  \#\{h_{n-k+1} | h_{n-k+1} \geq i\} + i-n$.  Reindexing the set, we obtain $\beta_i =  \#\{h_{k} | h_{k} \geq i\} + i-n$, which is the number of shaded boxes below the diagonal on the $i^{th}$ row of the Hessenberg diagram of $h$.  If we subtract unshaded boxes below the diagonal rather than shaded boxes above the diagonal, we get $\beta_i = i - \#\{h_k | h_k < i\}$, as desired.
\end{proof}

\subsection{Posets on Hessenberg functions and degree tuples}\label{sec:Hess_poset}

Hessenberg functions and degree tuples have natural partial orders induced from partial orders on the lattice $\mathbb{Z}^n$.  We use the following, which corresponds to the partial ordering on Dyck paths by containment.

\begin{definition}[Poset on Hessenberg functions]\label{def:h_sequence_tree}
Consider the two Hessenberg functions $h=(h_1,\ldots,h_n)$ and $h'=(h'_1,\ldots,h'_n)$.  The {\em partial order on Hessenberg functions} is defined by the rule that $h \leq h'$ if and only if $h_i \leq h_i'$ for all $i$.  The \textit{Hasse diagram on Hessenberg functions} is the directed graph whose vertices are Hessenberg functions, and with an edge from $h$ to $h'$ if exactly one entry in $h'$ is one less than its corresponding entry in $h$, in other words if $h'_{i_0} = h_{i_0} - 1$ for some $i_0$ but $h'_i = h_i$ for all $i\neq i_0$.   
\end{definition}

Recall that $h \geq h'$ in the partial order if and only if there is a path from $h$ to $h'$ in the Hasse diagram.  The left side of Figure~\ref{fig:h_beta_trees} gives an example of the Hasse diagram on Hessenberg functions when $n=4$. For all $n$, the top vertex of the Hasse diagram is the function $(n,\ldots,n)$ and the bottom vertex is $(1,2,\ldots,n)$.  (The double-lined dashed edges in Figure~\ref{fig:h_beta_trees} will be explained in Section~\ref{subsec:special_containment_of_I_ideals}.)
 
We define a partial order on degree tuples similarly, with $\beta \geq \beta'$ if $\beta_i \geq \beta_i'$ for all $i$.  The right side of Figure~\ref{fig:h_beta_trees} gives an example of the Hasse diagram when $n=4$.  The reader may observe that the Hasse diagram for this partial order is the same as that for Hessenberg functions.  By Lemma~\ref{lemma:simple_formula_to_compute_beta}, the map $F$ from Hessenberg functions to degree tuples that was defined in Corollary~\ref{cor:hess_fcns_biject_with_degree_tuples} preserves these partial orders.  

\begin{figure}[t]
\begin{center}
\subfigure[Hessenberg functions.]{
\xymatrix{
& h=4444 \ar@{:>}[d] \\
& 3444 \ar@{:>}[dr] \ar[dl] \\
2444 \ar[d] \ar@{:>}[dr] &  & 3344 \ar@{:>}[d] \ar@{:>}[dl] \\
1444 \ar@{:>}[d] & 2344 \ar[dl] \ar[d] \ar@{:>}[dr] & 3334\ar@{:>}[d] \\
1344 \ar@{:>}[dr] \ar[d] & 2244 \ar@{:>}[dl] \ar@{:>}[dr] & 2334 \ar@{:>}[d] \ar[dl] \\
1244 \ar@{:>}[dr] & 1334 \ar@{:>}[d] & 2234 \ar@{:>}[dl] \\
& 1234
}
}
\hspace{.25in}
\subfigure[Degree tuples.]{
\xymatrix{
& \beta=4321 \ar[d] \\
& 3321 \ar[dl] \ar[dr] \\
3221 \ar[d] \ar[dr] &  & 2321 \ar[d] \ar[dl] \\
3211 \ar[d] & 2221 \ar[d] \ar[dr] \ar[dl] & 1321\ar[d] \\
2211 \ar[d] \ar[dr] & 2121 \ar[dl] \ar[dr] & 1221 \ar[d] \ar[dl] \\
2111 \ar[dr] & 1211 \ar[d] & 1121 \ar[dl] \\
& 1111
}
}
\caption{\label{fig:h_beta_trees} Hasse diagrams for $n=4$.}
\end{center}
\end{figure}
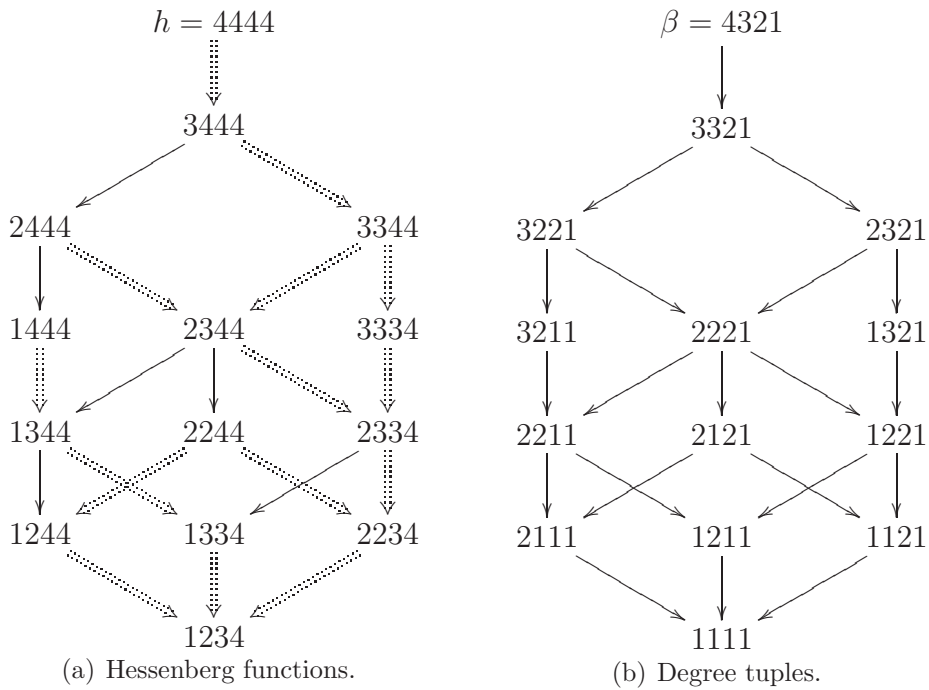

The next corollary fits Hessenberg functions and degree tuples into the existing literature, including Stanley's list of the (many) combinatorial interpretations of Catalan numbers~\cite{RS2}.  It follows immediately from Proposition~\ref{prop:one_to_one_correspondences}, together with the corresponding results counting Dyck paths or chains of Dyck paths, respectively.

\begin{corollary}\label{cor:CatalanPaths} The following two enumerative properties hold:

\begin{enumerate}
\item The number of Hessenberg functions (equivalently, degree tuples) is
$$ \mbox{Catalan(n)} = \frac{1}{n+1} \left(\begin{array}{c} {2n} \\ {n} \end{array}\right).$$
\item The number of maximal chains, namely chains from the top vertex down to the bottom vertex, on the Hasse diagram of Hessenberg functions (equivalently, degree tuples) is
$$\frac{\left(\begin{array}{c} {n} \\ {2} \end{array}\right)!}{\prod_{i=1}^{n-1} (2i-1)^{n-1}}.$$
\end{enumerate}
\end{corollary}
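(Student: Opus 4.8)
The plan is to deduce both statements from Proposition~\ref{prop:one_to_one_correspondences}, converting each enumeration into a classical count; I will spell out the two reductions rather than merely cite them. For part (1), Proposition~\ref{prop:one_to_one_correspondences} puts Hessenberg functions (equivalently degree tuples) in bijection with Dyck paths, and in our convention a Dyck path is, after undoing the $90^{\circ}$ rotation, exactly a lattice path from $(0,0)$ to $(n,n)$ that never passes above the main diagonal. It is classical (one of the standard Catalan interpretations; see Stanley~\cite{RS2}) that the number of such paths is $\frac{1}{n+1}\binom{2n}{n}$, which gives the first claim. One could instead give a direct bijective argument on the sequences $(h_1,\dots,h_n)$ with $i\le h_i\le h_{i+1}\le n$, but invoking the Dyck-path bijection is cleanest.

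For part (2) I would transport the Hasse diagram to Young's lattice via the reversal bijection $h\mapsto h^{rev}$ of Proposition~\ref{prop:one_to_one_correspondences}. Under it the covering relation of Definition~\ref{def:h_sequence_tree}, namely lowering exactly one entry of $h$ by one, becomes the relation of removing a single box from the ample partition $h^{rev}$ while keeping it ample; the top vertex $(n,\dots,n)$ maps to the full square $(n^n)$ and the bottom vertex $(1,2,\dots,n)$ maps to the staircase $\rho=(n,n-1,\dots,1)$. Hence maximal chains in the Hasse diagram correspond bijectively to saturated chains from $(n^n)$ down to $\rho$ in Young's lattice. Labelling each box of the skew shape $(n^n)/\rho$ by the step at which it is adjoined as the chain is traversed upward produces a standard Young tableau of that skew shape, and this assignment is a bijection. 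Since $(n^n)/\rho$ has $i-1$ boxes in its $i$-th row, right-justified, a $180^{\circ}$ rotation carries it to the straight staircase shape $\delta_{n-1}=(n-1,n-2,\dots,1)$; rotation preserves the number of standard Young tableaux, so the count equals $f^{\delta_{n-1}}$.

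Finally I would apply the hook length formula to $\delta_{n-1}$. This shape has $\sum_{i=1}^{n-1} i=\binom{n}{2}$ boxes, which both explains the numerator $\binom{n}{2}!$ and shows every maximal chain has length $\binom{n}{2}$ (each covering step changes the coordinate sum by exactly one). A short arm/leg computation gives that the hook length of the box in row $i$, column $j$ of $\delta_{n-1}$ is $2(n-i-j)+1$; collecting boxes with equal hook length, the odd number $2k+1$ occurs with multiplicity $n-k-1$ for $k=0,1,\dots,n-2$, so $\prod_{\square}h(\square)=\prod_{k=0}^{n-2}(2k+1)^{\,n-k-1}=\prod_{i=1}^{n-1}(2i-1)^{\,n-i}$. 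The hook length formula then yields $f^{\delta_{n-1}}=\binom{n}{2}!\big/\prod_{i=1}^{n-1}(2i-1)^{\,n-i}$; for $n=3,4$ this is $2$ and $16$, which one can also check directly against Figure~\ref{fig:h_beta_trees}.

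The statement is genuinely a corollary, so there is no deep obstacle. The only points requiring care are: (a) verifying that $h\mapsto h^{rev}$ matches the covering relation of the Hasse diagram with the box-removal covering relation on the ample interval of Young's lattice — this is immediate from the poset-isomorphism observations already recorded around Corollary~\ref{cor:hess_fcns_biject_with_degree_tuples}; and (b) the hook-length bookkeeping for the staircase, from which the exponent in the denominator of (2) should read $2i-1$ to the power $n-i$.
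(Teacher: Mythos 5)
Your proof is correct, and it takes a genuinely different route from the paper's, which simply cites Stanley's enumeration of Dyck paths for part (1) and Woodcock's Proposition~50 (with an alternate reference to Stanley) for part (2). You instead give a self-contained argument for part (2): transport the Hasse diagram via $h \mapsto h^{rev}$ to the ample interval $[\rho,(n^n)]$ of Young's lattice (the covering relation of Definition~\ref{def:h_sequence_tree} becomes single-box removal), identify maximal chains with standard Young tableaux of the skew shape $(n^n)/\rho$, rotate to the staircase $\delta_{n-1}=(n-1,\ldots,1)$, and apply the hook length formula. The bookkeeping checks out: box $(i,j)$ of $\delta_{n-1}$ has hook length $2(n-i-j)+1$, the hook product is $\prod_{i=1}^{n-1}(2i-1)^{\,n-i}$, and there are $\binom{n}{2}$ boxes. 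You have also correctly caught a misprint in the paper's displayed formula: the exponent should be $n-i$, not $n-1$. (With exponent $n-1$ the $n=4$ denominator would be $1^3\cdot 3^3\cdot 5^3=3375$, and $720/3375$ is not even an integer, whereas the paper's own worked example gives $16 = 720/45$ with $45 = 1^3\cdot 3^2\cdot 5^1$, i.e., exponent $n-i$.) What the paper's citation approach buys is brevity and a pointer to Woodcock's bijective proof; what yours buys is a complete derivation from the hook length formula together with the sanity check that exposed the typo.
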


\begin{proof}
The first claim follows from Stanley's enumeration of Dyck paths~\cite[Corollary 6.2.3(v)]{RS2}.  Woodcock has a particularly ingenious proof of the second claim~\cite[Proposition 50]{JW}; Stanley gives a different version of this formula~\cite[pg.116]{RS1}. %She establishes a bijection between maximal chains and standard Young tableau for the staircase partition of size $\binom{n}{2}$.  The result follows from the hook length formula.
\end{proof}

\begin{example}
Figure~\ref{fig:h_beta_trees} shows there are exactly 14 nodes, which is the $4^{th}$ Catalan number by Corollary~\ref{cor:CatalanPaths}.  The number of maximal chains of Hessenberg functions is
$$\frac{\left(\begin{array}{c} {4} \\ {2} \end{array}\right)!}{\prod_{i=1}^{4-1} (2i-1)^{4-1}} \; = \; 16.$$
\end{example}

\subsection{Geometric interpretations} \label{section:geom_prelims}
We study Hessenberg functions for the following geometric application.  A \textit{flag} in $\mathbb{C}^n$ is a nested sequence of vector subspaces
$$V_1 \subseteq V_2 \subseteq \cdots \subseteq V_n = \mathbb{C}^n$$
where each $V_i$ has dimension $i$.  The collection of all such flags is called the \textit{full flag variety} and is denoted $\mathfrak{F}$.  Hessenberg varieties are parametrized by a linear operator together with a Hessenberg function, as follows.

\begin{definition}[Hessenberg variety]
Fix a nilpotent matrix $X \in \textup{Mat}_n(\mathbb{C})$ and let $h$ be a Hessenberg function.  The \textit{Hessenberg variety} of $X$ and $h$ is the following subvariety:
$$\mathfrak{H}(X,h) = \{\mbox{Flags}\in\mathfrak{F} \;|\; X\cdot V_i\subseteq V_{h(i)} \mbox{ for all $i$} \}.$$
\end{definition}

For instance, when $h=(n,\ldots,n)$ then every flag satisfies  $X\cdot V_i\subseteq V_{h(i)}$ and so $\mathfrak{H}(X,h)$ is the full flag variety.  When $h=(1,\ldots,n)$ or equivalently $h(i)=i$ for all $i$, then $\mathfrak{H}(X,h)$ is the {\em Springer variety} from the Introduction.

%%%%%%%%%%%%%%%%%%%%%%%%%%%%%%%%%%%%%%%%%%%%
%%%%%%%%%%%%%%%%%%%%%%%%%%%%%%%%%%%%%%%%%%%%
%%  Sect.3 on Decomposition Identities
%%%%%%%%%%%%%%%%%%%%%%%%%%%%%%%%%%%%%%%%%%%%
%%%%%%%%%%%%%%%%%%%%%%%%%%%%%%%%%%%%%%%%%%%%

\section{Decomposition identities on truncated symmetric functions}\label{sec:alg_section}

\subsection{Algebraic preliminaries}
We will study polynomials that are symmetric in a subset of the variables $\{x_1,\ldots, x_n\}$, focusing particularly on elementary and complete symmetric functions.  While not symmetric functions, these {\em truncated symmetric functions} retain significant combinatorial structure.  In this section, we prove a series of identities involving truncated symmetric functions.  

\begin{definition}[Truncated symmetric functions]\label{def:truncated_sym_fcn}
For $S \subseteq \{1,\ldots,n\}$ and $d>0$, the  \textit{truncated elementary symmetric function} $e_d(S)$ is defined to be the following sum:
$$e_d(S) = \sum_{\{ i_1<\cdots<i_d\} \subseteq S} x_{i_1} x_{i_2} \cdots x_{i_d}.$$
The  \textit{truncated complete symmetric function} $\tilde{e}_d(S)$ is defined as the following sum of (not necessarily squarefree) monomials:
$$\tilde{e}_d(S) = \sum_{\begin{array}{c} \textup{multisets} \\
\{i_1\leq\cdots\leq i_d\}  \subseteq S\end{array}}
x_{i_1} x_{i_2} \cdots x_{i_d}.$$
Our convention is that if $d=0$ then $e_d(S) = \tilde{e}_d(S) = 1$, including when $S$ is the empty set.  If $d<0$ and $S \neq \emptyset$ then $e_d(S) = \tilde{e}_d(S) = 0$.  If $d>|S|$ then $e_d(S)=0$.
\end{definition}

\begin{example}
Fix $n=4$.  If $d=2$ and $r=3$, then the truncated symmetric function $e_2(1,2,3) = x_1x_2+x_1x_3+x_2x_3$ and $\tilde{e}_2(3,4) = x_3^2+x_3x_4+x_4^2$.
\end{example}

\begin{remark}
Although some other sets $S$ will arise naturally in our calculations, for notational convenience we will typically use either the set $S=\{1,2,\ldots,r\}$ or the set $S = \{r,r+1,\ldots,n\}$.  When $r=n$, the function $e_d(1,\ldots,n)$ is the elementary symmetric function of degree $d$, denoted $e_d$ in this paper like elsewhere in the literature.  When $r=1$, the function $\tilde{e}_d(1,\ldots,n)$ is the complete symmetric function of degree $d$, which is usually denoted $h_d$.  We avoid standard notation in this case because $h$ is used elsewhere.
\end{remark}

\subsection{Four decomposition identities} This section collects several identities on truncated symmetric functions.  We use these identities repeatedly in the remaining sections.  The first two, which are fundamental to our analysis in Sections~\ref{subsec:general_containment_of_I_ideals} and~\ref{sec:Ideal_equality_section}, express the truncated elementary symmetric function $e_d(1,\ldots,r)$ in terms of $e_{d-1}(1,\ldots,r-1)$ and $e_d(1,\ldots,r-1)$, and similarly for the truncated complete symmetric functions.  Both are well known (e.g.,~\cite[pg.~21]{Pro07}, \cite[Equation (3.1)]{FGP97}).%Neither lemma is particularly new; we provide proofs for completeness, and give different styles of proof for variety.

\begin{lemma}\label{lem:decomposing_truncated_elem_symfcn}
The truncated elementary symmetric function $e_d(1,\ldots,r)$ decomposes as
$$e_d(1,\ldots,r) = x_r \cdot e_{d-1}(1,\ldots,r-1) + e_d(1,\ldots,r-1)$$
for $d < r$.  If $d=r$, then $e_d(1,\ldots,r)=x_r \cdot e_{d-1}(1,\ldots,r-1)$.
\end{lemma}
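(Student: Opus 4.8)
The plan is to partition the index set $\{1,\ldots,r\}$ according to whether or not the largest variable $x_r$ appears in a given squarefree monomial, mirroring the standard pull-out-the-last-variable argument for ordinary elementary symmetric functions. Recall that $e_d(1,\ldots,r)$ is by definition the sum over all $d$-element subsets $\{i_1 < \cdots < i_d\} \subseteq \{1,\ldots,r\}$ of the monomial $x_{i_1}\cdots x_{i_d}$. I would split this sum into two disjoint pieces: those subsets that contain the element $r$, and those that do not.

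First I would handle the subsets not containing $r$. These are exactly the $d$-element subsets of $\{1,\ldots,r-1\}$, and the corresponding monomials sum to $e_d(1,\ldots,r-1)$ by definition. Next, for the subsets $\{i_1 < \cdots < i_d\}$ that do contain $r$, necessarily $i_d = r$, so each such monomial factors as $x_r \cdot (x_{i_1}\cdots x_{i_{d-1}})$ where $\{i_1 < \cdots < i_{d-1}\}$ ranges over all $(d-1)$-element subsets of $\{1,\ldots,r-1\}$. Summing these and factoring out $x_r$ yields $x_r \cdot e_{d-1}(1,\ldots,r-1)$. Adding the two pieces gives the stated decomposition $e_d(1,\ldots,r) = x_r \cdot e_{d-1}(1,\ldots,r-1) + e_d(1,\ldots,r-1)$.

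For the boundary case $d = r$, the only $d$-element subset of $\{1,\ldots,r\}$ that does not contain $r$ would have to be a $d$-element subset of the $(r-1)$-element set $\{1,\ldots,r-1\}$, which is impossible; equivalently $e_r(1,\ldots,r-1) = 0$ by the convention that $e_d(S) = 0$ when $d > |S|$. Hence in this case $e_d(1,\ldots,r) = x_r \cdot e_{d-1}(1,\ldots,r-1)$, as claimed. One should also note the degenerate sub-cases covered by the stated conventions (e.g.\ $d = 0$, or $d-1 < 0$), but these are immediate from Definition~\ref{def:truncated_sym_fcn} and require no argument.

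There is essentially no obstacle here: the identity is a direct bookkeeping consequence of the combinatorial definition, and the only thing to be careful about is keeping the edge-case conventions straight (what happens when $d \le 0$ or $d \ge r$) so that the formula holds uniformly without spurious extra terms. The argument is entirely elementary and occupies only a few lines.
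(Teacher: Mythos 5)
Your proof is correct and is the standard subset-splitting argument one would give here; the paper itself omits a proof of this lemma (citing it as well known), so there is nothing to compare against, but your argument is exactly the expected one and handles the $d=r$ boundary case properly via the convention $e_d(S)=0$ when $d>|S|$.
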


%\begin{proof}
%The functions $e_d(1,\ldots,r)$ vanish for $d>r$ so it suffices to consider $d\leq r$.  Suppose $0<d<r$.  Then
%\begin{align*}
%e_d(1,\ldots,r) &=	\sum_{1 \leq i_1 < \cdots < i_d \leq r} x_{i_1} x_{i_2} \cdots x_{i_d}\\
%		&= 	x_r \cdot \left( \sum_{1 \leq i_1 < \cdots < i_{d-1} \leq r-1} x_{i_1} x_{i_2} \cdots x_{i_{d-1}} \right) + \sum_{1 \leq i_1 < \cdots < i_d \leq r-1} x_{i_1} x_{i_2} \cdots x_{i_d}\\
%		&=	x_r\cdot e_{d-1}(1,\ldots,r-1) \;\; + \;\; e_d(1,\ldots,r-1).
%\end{align*}
%If $d=r$, then the second summand vanishes and $e_r(1,\ldots,r)=x_r \cdot e_{r-1}(1,\ldots,r-1)$.
%\end{proof}

\begin{lemma}\label{lem:decomposing_truncated_complete_symfcn}
The truncated complete symmetric function $\tilde{e}_d(r,r+1,\ldots,n)$ decomposes as
$$\tilde{e}_d(r,r+1,\ldots,n) = x_r \cdot \tilde{e}_{d-1}(r,r+1,\ldots,n) + \tilde{e}_d(r+1,r+2,\ldots,n)$$
for $r<n$.  If $r=n$, then $\tilde{e}_d(n) = x_n \cdot \tilde{e}_{d-1}(n)$, where $\tilde{e}_i(n)  = x_n^i$.
\end{lemma}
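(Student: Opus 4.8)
The statement to prove is Lemma~\ref{lem:decomposing_truncated_complete_symfcn}, the decomposition
\[\tilde{e}_d(r,r+1,\ldots,n) = x_r \cdot \tilde{e}_{d-1}(r,r+1,\ldots,n) + \tilde{e}_d(r+1,r+2,\ldots,n)\]
for $r<n$, together with the base case $r=n$. The plan is to argue directly from the combinatorial definition of $\tilde{e}_d$ as a sum over multisets, by partitioning the indexing multisets according to whether or not the smallest available index $r$ appears.

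First I would set $S = \{r, r+1, \ldots, n\}$ and recall that $\tilde{e}_d(S)$ is the sum of $x_{i_1}\cdots x_{i_d}$ over all size-$d$ multisets $\{i_1 \leq \cdots \leq i_d\} \subseteq S$. I partition these multisets into two classes: those that contain at least one copy of $r$, and those that contain no copy of $r$. The multisets in the second class are exactly the size-$d$ multisets drawn from $S \setminus \{r\} = \{r+1, \ldots, n\}$, so their contribution to the sum is precisely $\tilde{e}_d(r+1,\ldots,n)$. For the first class, I would observe that deleting one copy of $r$ from such a multiset gives a bijection with the size-$(d-1)$ multisets drawn from $S$ (since $r$ is the minimum of $S$, re-inserting a copy of $r$ into any multiset over $S$ is always legal and recovers a multiset of the first class); under this bijection the monomial $x_{i_1}\cdots x_{i_d}$ corresponds to $x_r$ times the monomial of the size-$(d-1)$ multiset. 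Hence the first class contributes $x_r \cdot \tilde{e}_{d-1}(r,\ldots,n)$, and summing the two contributions gives the claimed identity. For the base case $r=n$, the only multisets over $\{n\}$ of size $d$ is $\{n,n,\ldots,n\}$, giving $\tilde{e}_d(n) = x_n^d$ directly, from which $\tilde{e}_d(n) = x_n \cdot \tilde{e}_{d-1}(n)$ is immediate; I would also note the degenerate cases ($d=0$ gives $1$, $d<0$ gives $0$) are consistent with the stated conventions in Definition~\ref{def:truncated_sym_fcn} so that the formula holds for all $d$.

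There is no serious obstacle here; the lemma is essentially a bookkeeping identity and is cited as well known in the excerpt. The only point requiring any care is making the bijection in the first class precise --- specifically, that because $r = \min S$, the operation ``remove one copy of $r$'' is always defined on multisets in the first class and its inverse ``add one copy of $r$'' always lands back in the first class and stays inside $S$; this is where the truncation to $\{r,\ldots,n\}$ (as opposed to an arbitrary subset $S$) is used. A secondary point is to check the edge cases: when $d = 0$ both sides reduce to $1$ (the empty multiset contains no copy of $r$, so it falls in the second class, and $x_r \cdot \tilde{e}_{-1} = 0$), and when $d$ exceeds the available multiplicities there is nothing to check since multisets impose no upper bound on $d$ as long as $S \neq \emptyset$. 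Alternatively, one could deduce the identity formally from the generating-function factorization $\sum_{d\geq 0}\tilde{e}_d(r,\ldots,n)\, t^d = \prod_{i=r}^n (1 - x_i t)^{-1}$ by peeling off the $i=r$ factor, but the direct multiset argument is cleaner and self-contained, so that is the route I would take.
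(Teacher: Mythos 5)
Your proof is correct, and the argument---partitioning the size-$d$ multisets over $\{r,\ldots,n\}$ by whether they contain $r$, and deleting one copy of $r$ to set up the bijection with size-$(d-1)$ multisets over the same set---is the standard one. For the record, the paper does not supply its own proof of this lemma at all: it states it and cites it as well known (Procesi, and Fomin--Gelfand--Postnikov's Equation (3.1)), so there is no internal proof to compare against; your direct multiset argument is exactly what one would write. The only cosmetic remark is that the $r=n$ base identity $\tilde{e}_d(n) = x_n\cdot\tilde{e}_{d-1}(n)$ is false at $d=0$ (where the left side is $1$ and the right side is $0$), but the paper's Definition~\ref{def:truncated_sym_fcn} sets things up for $d>0$, so this is a non-issue; you already noted the analogous $d=0$ check works out in the $r<n$ case.
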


%\begin{proof}
%Consider the set 
%\[\mathcal{P}_{d,r} = \left\{\textup{multisets }M \subseteq \{x_r, x_{r+1}, \ldots, x_n\} \textup{ such that }|M|=d\right\}.\]  
%It can be partitioned into two disjoint subsets: those multisets $M$ that contain $x_r$ and those that do not.  The multiset $M$ contains $x_r$ exactly when $M$ can be written $M = \{x_r\} \cup M'$ for a multiset $M' \in \mathcal{P}_{d-1,r}$.  The multiset $M$ does not contain $x_r$ exactly when $M \in \mathcal{P}_{d,r+1}$.  Each truncated complete symmetric function can be written
%\[\tilde{e}_d(r,r+1,\ldots,n) = \sum_{M \in \mathcal{P}_{d,r}} \left(\prod_{x_i \in M} x_i\right)\]
%so the claim follows.  If $r=n$, then the second summand in the statement of the claim vanishes and $\tilde{e}_d(n) = x_n \cdot \tilde{e}_{d-1}(n)$.
%\end{proof}

The next lemma will decompose the truncated elementary symmetric function $e_d(1,\ldots,r) $ as a linear combination of truncated elementary symmetric functions with a {\em fixed}, smaller variable set but \emph{varying} degrees.

\begin{lemma}\label{lem:cool_combinatorial_sym_func_lemma}
Let $d,r,n$ be positive integers such that $d<r\leq n$ and fix $j<r$.  The  function $e_d(1,\ldots,r)$ is a $\mathbb{Z}[x_1,\ldots,x_n]$-linear combination of truncated elementary symmetric functions in the variables $x_1,\ldots,x_{r-j}$:
\begin{equation}
e_d(1,\ldots,r) = \sum_{t=0}^j e_t(r-j+1,r-j+2,\ldots,r) \cdot e_{d-t}(1,\ldots,r-j).\label{equ:cool_sum_for_lemma}
\end{equation}
\end{lemma}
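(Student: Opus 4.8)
The plan is to prove the identity by iterating the basic one-step decomposition of Lemma~\ref{lem:decomposing_truncated_elem_symfcn}, peeling off the variables $x_r, x_{r-1}, \ldots, x_{r-j+1}$ one at a time. Concretely, I would induct on $j$. The base case $j=0$ is trivial, since the right-hand side reduces to the single term $e_0(\emptyset)\cdot e_d(1,\ldots,r) = e_d(1,\ldots,r)$.

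For the inductive step, assume the formula holds for $j-1$, so that
\[
e_d(1,\ldots,r) = \sum_{t=0}^{j-1} e_t(r-j+2,\ldots,r)\cdot e_{d-t}(1,\ldots,r-j+1).
\]
Now apply Lemma~\ref{lem:decomposing_truncated_elem_symfcn} to each factor $e_{d-t}(1,\ldots,r-j+1)$, writing it as $x_{r-j+1}\cdot e_{d-t-1}(1,\ldots,r-j) + e_{d-t}(1,\ldots,r-j)$ (taking care of the boundary case where the degree equals the number of variables, where the identity still holds because $e_{d-t}(1,\ldots,r-j)=0$ when $d-t>r-j$). Substituting and collecting the coefficient of $e_{d-s}(1,\ldots,r-j)$ for each $s$, I get two contributions: the term $e_s(r-j+2,\ldots,r)$ from the $t=s$ summand (the part without $x_{r-j+1}$), and the term $x_{r-j+1}\cdot e_{s-1}(r-j+2,\ldots,r)$ from the $t=s-1$ summand (the part with $x_{r-j+1}$). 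Their sum is exactly $e_s(r-j+1, r-j+2, \ldots, r)$, again by Lemma~\ref{lem:decomposing_truncated_elem_symfcn} applied with the variable set $\{r-j+1,\ldots,r\}$ and the new smallest variable $x_{r-j+1}$. This gives the $j$ version of the formula.

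The main thing to be careful about — and the only real obstacle — is the bookkeeping at the edges of the summation range: when $s=0$ the $t=s-1$ term is absent, when $s=j$ the $t=s$ term is absent, and one must check that the conventions of Definition~\ref{def:truncated_sym_fcn} ($e_{-1}=0$, $e_0=1$, $e_k(S)=0$ for $k>|S|$) make all these boundary cases consistent so that the single clean formula $e_s(r-j+1,\ldots,r)$ covers every $s$. A cleaner alternative would be a direct combinatorial proof: every squarefree degree-$d$ monomial $x_{i_1}\cdots x_{i_d}$ with $i_1<\cdots<i_d\le r$ splits uniquely according to how many of its indices lie in $\{r-j+1,\ldots,r\}$ versus $\{1,\ldots,r-j\}$; if $t$ indices lie in the former block the monomial contributes to $e_t(r-j+1,\ldots,r)\cdot e_{d-t}(1,\ldots,r-j)$, and summing over $t$ from $0$ to $j$ (the block has only $j$ variables) recovers the left-hand side. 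I would likely present the inductive argument as the main proof since it mirrors how the lemma is used downstream, but the combinatorial observation is worth a remark as it makes the identity transparent.
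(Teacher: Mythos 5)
Your proposal is correct, but you invert the paper's emphasis: you give an inductive proof built on iterating Lemma~\ref{lem:decomposing_truncated_elem_symfcn} as the main argument, relegating the direct combinatorial observation to a remark, whereas the paper's proof \emph{is} that combinatorial observation. The paper simply notes that every $d$-element subset $\{i_1<\cdots<i_d\}\subseteq\{1,\ldots,r\}$ is enumerated uniquely by choosing $t$ indices from the block $\{r-j+1,\ldots,r\}$ and $d-t$ from $\{1,\ldots,r-j\}$, and matches the resulting monomials to the product $e_t(r-j+1,\ldots,r)\cdot e_{d-t}(1,\ldots,r-j)$; it is three lines and needs no bookkeeping at the endpoints of the sum. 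Your induction works, and its base case and boundary checks ($e_{-1}=0$, $e_k(S)=0$ for $k>|S|$) are handled correctly, but one small point to flag: when you recombine $x_{r-j+1}\cdot e_{s-1}(r-j+2,\ldots,r) + e_s(r-j+2,\ldots,r)$ back into $e_s(r-j+1,\ldots,r)$, you are peeling off the \emph{smallest} variable of $\{r-j+1,\ldots,r\}$, whereas Lemma~\ref{lem:decomposing_truncated_elem_symfcn} as stated peels off the largest; this is harmless since elementary symmetric functions are symmetric and the identity $e_d(S)=x_a\,e_{d-1}(S\setminus\{a\})+e_d(S\setminus\{a\})$ holds for any $a\in S$, but it is worth saying so explicitly rather than citing the lemma verbatim. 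The inductive route buys a proof that mirrors how the identity is applied downstream (where one-step decompositions are the working tool), at the cost of more edge-case checking; the paper's bijective route is the more transparent and self-contained of the two, which is presumably why the authors chose it.
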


\begin{proof}
By definition, the function $e_d(1,\ldots,r) = \sum x_{i_1} x_{i_2} \cdots x_{i_d}$ where the sum is taken over the $\left(\begin{array}{c} {r} \\ {d} \end{array}\right)$ different subsets $\{i_1 < \cdots < i_d\}$ of $\{1,2,\ldots,r\}$.  We describe an alternate strategy to enumerate the subsets of $\{1,2,\ldots,r\}$.  For each $t=0,1,\ldots,j$, choose $t$ elements from the set $\{r-j+1,r-j+2,\ldots,r\}$ and choose $d-t$ elements independently from $\{1,2,\ldots,r-j\}$. By definition, for each fixed $t \in \{0,1,2,\ldots,j\}$, the product 
\begin{align}
e_t(r-j+1,r-j+2,\ldots,r) \cdot e_{d-t}(1,\ldots,r-j) 
\label{equ:summand_in_cool_lemma}
\end{align}
sums exactly those monomials $x_{i_1} \cdots x_{i_t} x_{i_{t+1}} \cdots x_{i_d}$ whose subscripts satisfy:
$$r-j+1 \leq i_1 < \cdots < i_t \leq r \;\; \mbox{ and } \;\; 1 \leq i_{t+1} < \cdots < i_d \leq r-j.$$  
We conclude $\sum_{t=0}^j e_t(r-j+1,r-j+2,\ldots,r) \cdot e_{d-t}(1,\ldots,r-j) = e_d(1,\ldots,r)$ as desired.
\end{proof}

\begin{remark}\label{rem:nonzero_terms_in_cool_sym_func_lemma}
If $j>r$ then all of the summands in Equation~\eqref{equ:summand_in_cool_lemma} are zero, so we assume $0\leq j\leq r$.  In addition, the product in Equation~\eqref{equ:summand_in_cool_lemma} may be zero for certain values of $t$.  The second factor is zero unless $d-t \leq r-j$.  (The first factor is always nonzero since it has $j$ variables and degree $t\leq j$.)  Hence the product in Equation~\eqref{equ:summand_in_cool_lemma} is nonzero whenever $t \geq max\{0,d-r+j\}$.  
\end{remark}

\begin{remark}
By setting all $x_i=1$ and counting terms in Lemma~\ref{lem:cool_combinatorial_sym_func_lemma}, we obtain another proof of the well-known combinatorial identity:
$$\sum_{t=0}^j \binom{j}{t} \cdot \binom{r-j}{d-t} = \binom{r}{d}.$$
%This follows since for each $t\in\{0,\ldots,j\}$, there are $\binom{j}{t} \cdot \binom{r-j}{d-t}$ possible summands in the expansion of the product in Equation~\eqref{equ:summand_in_cool_lemma}.  Hence as $t$ goes from 0 to $j$, the expansion of the right hand side of Equation~\eqref{equ:cool_sum_for_lemma} yields exactly $\sum_{t=0}^j \binom{j}{t} \cdot \binom{r-j}{d-t}$ pairwise distinct summands.  Moreover, the right hand side of Equation~\eqref{equ:cool_sum_for_lemma} recovers all possible $\binom{r}{d}$ distinct summands in the function $e_d(1,\ldots,r)$.
\end{remark}

The final identity in this section is an analogue of Lemma~\ref{lem:cool_combinatorial_sym_func_lemma} for complete symmetric functions.  It will decompose the truncated complete symmetric function $\tilde{e}_d(r,r+1,\ldots,n)$ as a linear combination of truncated complete symmetric functions all of which have the \emph{same} smaller variable set $x_{r+1},x_{r+2},\ldots,x_n$ but \emph{varying} degrees.  Unlike Lemma~\ref{lem:cool_combinatorial_sym_func_lemma}, we cannot completely eliminate an expression $\tilde{e}_{d'}(r,r+1,\ldots,n)$ involving the original variable set.  %This lemma is used in Corollary~\ref{cor:I_in_J}.

\begin{lemma}\label{lem:complete_symm_func_lemma}
Fix any $d,d'$ with $0 \leq d' \leq d \leq n$.  Then
\[\tilde{e}_d(r,\ldots,n) = x^{d-d'}_r \cdot \tilde{e}_{d'}(r,\ldots,n) + \sum_{t=1}^{d-d'} x^{d-(d'+t)}_r \cdot \tilde{e}_{d'+t}(r+1,\ldots,n).\]
\end{lemma}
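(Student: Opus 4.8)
The plan is to induct on the nonnegative integer $d-d'$, using the one-step decomposition of Lemma~\ref{lem:decomposing_truncated_complete_symfcn} to drive the induction. For the base case $d=d'$, the factor $x_r^{d-d'}=x_r^0=1$ and the sum $\sum_{t=1}^{0}$ is empty, so the right-hand side collapses to $\tilde{e}_{d'}(r,\ldots,n)=\tilde{e}_{d}(r,\ldots,n)$, which is exactly the left-hand side.

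For the inductive step, assume $d>d'$ and that the identity holds for the pair $(d-1,d')$, which is legitimate since $d-1\geq d'$. Applying Lemma~\ref{lem:decomposing_truncated_complete_symfcn} once gives
\[\tilde{e}_d(r,\ldots,n) = x_r\cdot\tilde{e}_{d-1}(r,\ldots,n) + \tilde{e}_d(r+1,\ldots,n).\]
Substitute the inductive hypothesis for $\tilde{e}_{d-1}(r,\ldots,n)$ and distribute the factor $x_r$: multiplying $x_r^{(d-1)-d'}$ by $x_r$ produces $x_r^{d-d'}$, and multiplying each $x_r^{(d-1)-(d'+t)}$ by $x_r$ produces $x_r^{d-(d'+t)}$, so after this step the inherited sum runs over $t=1,\ldots,(d-1)-d'$. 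Finally, observe that the leftover term $\tilde{e}_d(r+1,\ldots,n)$ is precisely the missing $t=d-d'$ summand $x_r^{d-(d'+t)}\tilde{e}_{d'+t}(r+1,\ldots,n)$, since for $t=d-d'$ the exponent of $x_r$ is $0$ and $d'+t=d$. This extends the summation range to $t=1,\ldots,d-d'$ and yields the claimed formula.

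I do not anticipate a genuine obstacle: the only points requiring care are the bookkeeping of the exponents of $x_r$ and of the summation bounds (handled automatically by the induction), and the degenerate case $r=n$, where $\{r+1,\ldots,n\}=\emptyset$ forces every $\tilde{e}_{d'+t}(r+1,\ldots,n)$ with $d'+t\geq 1$ to vanish and the formula reduces to $x_n^{d-d'}\tilde{e}_{d'}(n)=x_n^{d-d'}x_n^{d'}=x_n^{d}=\tilde{e}_d(n)$, consistent with Lemma~\ref{lem:decomposing_truncated_complete_symfcn}. As an alternative to the induction, one may argue combinatorially: classify the degree-$d$ monomials in $\{x_r,\ldots,x_n\}$ by the exponent $a$ of $x_r$. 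Those with $a\geq d-d'$ are exactly $x_r^{d-d'}$ times a degree-$d'$ monomial in $\{x_r,\ldots,x_n\}$, accounting for $x_r^{d-d'}\tilde{e}_{d'}(r,\ldots,n)$; those with $a=d-(d'+t)$ for $1\leq t\leq d-d'$ are exactly $x_r^{d-(d'+t)}$ times a degree-$(d'+t)$ monomial in $\{x_{r+1},\ldots,x_n\}$, accounting for the $t$-th summand. Since these cases are disjoint and exhaustive, summing them gives the identity directly.
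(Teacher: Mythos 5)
Your main proof is correct and takes essentially the same route as the paper's: both induct on $d-d'$ with Lemma~\ref{lem:decomposing_truncated_complete_symfcn} as the engine. The only difference is which end of the pair you move — the paper assumes the claim for $(d,d'+1)$ and decomposes $\tilde{e}_{d'+1}(r,\ldots,n)$ inside the inductive hypothesis, whereas you assume the claim for $(d-1,d')$ and decompose $\tilde{e}_d(r,\ldots,n)$ first before substituting the inductive hypothesis for $\tilde{e}_{d-1}(r,\ldots,n)$ — but this is bookkeeping, not a different idea. Your alternate combinatorial argument at the end, classifying the degree-$d$ monomials in $\{x_r,\ldots,x_n\}$ by the exponent of $x_r$, is a genuinely distinct and shorter proof; it avoids Lemma~\ref{lem:decomposing_truncated_complete_symfcn} and the induction entirely. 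In fact the paper itself uses exactly this style of argument to establish Equation~\eqref{eqn:J-containment} in the proof of Theorem~\ref{thm:J_ideal_containment}, which, after reindexing the sum by $s=d-(d'+t)$, is the same identity as Lemma~\ref{lem:complete_symm_func_lemma} — so the combinatorial route you sketch is one the authors evidently also found natural, just deployed later in the paper.
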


\begin{proof}
We induct on the difference $d-d'$.  The claim is vacuously true if $d-d'=0$.  We assume the claim holds for the pair $d, d'+1$, whose difference is $d-d' = i-1$.  In other words, we assume that $\tilde{e}_d(r,\ldots,n)$ decomposes as
\vspace{-.1in}
\begin{align}
x^{d-(d'+1)}_r \cdot \tilde{e}_{d'+1}(r,\ldots,n) + \sum_{t=1}^{d-(d'+1)} x^{d-(d'+1+t)}_r \cdot \tilde{e}_{(d'+1)+t}(r+1,\ldots,n).\label{eqn:induction hyp for complete lemma}
\end{align}
We prove the claim holds for the pair $d,d'$ with $d-d'=i$.  Lemma~\ref{lem:decomposing_truncated_complete_symfcn} implies 
$$\tilde{e}_{d'+1}(r,\ldots,n) = x_r \cdot \tilde{e}_{d'}(r,\ldots,n) + \tilde{e}_{d'+1}(r+1,\ldots,n).$$
Substituting into Equation~\eqref{eqn:induction hyp for complete lemma} and then incorporating into the sum, we obtain
\begin{align*}
\tilde{e}_d(r,\ldots,n) &= x^{d-(d'+1)}_r \cdot \left( x_r \cdot \tilde{e}_{d'}(r,\ldots,n) + \tilde{e}_{d'+1}(r+1,\ldots,n) \right) \\
				&\hspace{.75in} + \; \sum_{t=1}^{d-(d'+1)} x^{d-(d'+1+t)}_r \cdot \tilde{e}_{(d'+1)+t}(r+1,\ldots,n) \\
				&= x^{d-d'}_r  \cdot \tilde{e}_{d'}(r,r+1,\ldots,n) \\
				&\hspace{.75in} + \; \sum_{t=0}^{d-(d'+1)} x^{d-(d'+1+t)}_r \cdot \tilde{e}_{(d'+1)+t}(r+1,\ldots,n).
\end{align*}
Reparametrizing $t$ gives
$$\tilde{e}_d(r,\ldots,n) = x^{d-d'}_r \cdot \tilde{e}_{d'}(r,\ldots,n) +  \sum_{t=1}^{d-d'} x^{d-(d'+t)}_r \cdot \tilde{e}_{d'+t}(r+1,\ldots,n)$$
as desired.  By induction, the claim holds for all $0 \leq d' \leq d \leq n$.
\end{proof}

%%%%%%%%%%%%%%%%%%%%%%%%%%%%%%%%%%%%%%%%%%%%
%%%%%%%%%%%%%%%%%%%%%%%%%%%%%%%%%%%%%%%%%%%%
%%  Sect.4 on I_h ideals      ---   Contains 4 subsections
%%%%%%%%%%%%%%%%%%%%%%%%%%%%%%%%%%%%%%%%%%%%
%%%%%%%%%%%%%%%%%%%%%%%%%%%%%%%%%%%%%%%%%%%%

\section{Family of generalized Tanisaki ideals \texorpdfstring{$I_h$}{I}}\label{sec:I_ideals}

In this section, we construct a family of ideals $I_h$ parametrized by Hessenberg functions $h$ that partially generalize the Tanisaki ideal.  We then establish certain fundamental properties about these ideals.  Following Biagioli, Faridi, and Rosas's construction of the Tanisaki ideal~\cite{Biag08}, we will use Young diagrams to build a set of truncated elementary symmetric functions $\mathfrak{C}_h$ that generate $I_h$.  The partial order on Hessenberg functions corresponds to the partial order of inclusion of ideals, in the sense that if $h>h'$ then $I_h \subset I_{h'}$.  (We prove this in Section~\ref{subsec:general_containment_of_I_ideals}.)  Additionally, we prove in Section~\ref{subsec:special_containment_of_I_ideals} that for certain sequences of Hessenberg functions $h>h'$ the generating sets themselves satisfy $\mathfrak{C}_h \subset \mathfrak{C}_{h'}$.  In Section~\ref{sec:min_gen_set}, we exhibit a reduced generating set for $I_h$.  In Theorem~\ref{thm:Galetto_proof}, Galetto confirms that this set is in fact minimal.

%\subsection{Constructing the ideal \texorpdfstring{$I_h$}{I}}\label{subsec:I_Ideal_Construction}
\subsection{Constructing the ideal \texorpdfstring{$I_h$}{I}}\label{subsec:I_Ideal_Construction}
We begin by defining a tableau called an {\em $h$-Ferrers diagram} that corresponds to a Hessenberg function.  This generalizes Biagioli-Faridi-Rosas's construction of the Tanisaki ideal when $\mu=(1^n)$ from $h=(1,2,\ldots,n)$ to arbitrary $h$. 

\begin{definition}[$h$-Ferrers diagram]
Let $h=(h_1,\ldots,h_n)$ be a Hessenberg function.  Draw the Ferrers diagram for a staircase partition $(1,2,\ldots,n)$ flush right and bottom. % so it appears as an actual staircase.  
The {\em $h$-Ferrers diagram} is obtained by filling the bottom row with the numbers $h_1,h_2,\ldots,h_n$ from left to right, and filling with numbers in descending order up each column, as follows:
$$\setlength{\unitlength}{.24in}
\begin{picture}(24,6)(0,0)
\linethickness{.25pt}
\put(24,0){\line(-1,0){24}}
\put(24,1){\line(-1,0){24}}
\put(24,2){\line(-1,0){20}}
\put(24,4){\line(-1,0){12}}
\put(24,5){\line(-1,0){8}}
\put(24,6){\line(-1,0){4}}
\put(24,0){\line(0,1){6}}
\put(20,0){\line(0,1){6}}
\put(16,0){\line(0,1){5}}
\put(8,0){\line(0,1){3}}
\put(8,3){\line(1,0){4}}
\put(12,3){\line(0,1){1}}
\put(4,0){\line(0,1){2}}
\put(0,0){\line(0,1){1}}
\put(11.5,0.5){\ldots}
\put(11.5,1.5){\ldots}
\put(17.9,2.6){\vdots}
\put(21.9,2.6){\vdots}
\put(1.7,.25){\begin{footnotesize}\mbox{$h_1$}\end{footnotesize}}
\put(5.7,.25){\begin{footnotesize}\mbox{$h_2$}\end{footnotesize}}
\put(17.4,.25){\begin{footnotesize}\mbox{$h_{n-1}$}\end{footnotesize}}
\put(21.7,.25){\begin{footnotesize}\mbox{$h_n$}\end{footnotesize}}
\put(5.2,1.25){\begin{footnotesize}{$h_2-1$}\end{footnotesize}}
\put(16.8,1.25){\begin{footnotesize}\mbox{$h_{n-1}-1$}\end{footnotesize}}
\put(21.2,1.25){\begin{footnotesize}\mbox{$h_n-1$}\end{footnotesize}}
\put(16.2,4.25){\begin{footnotesize}\mbox{$h_{n-1}-(n-2)$}\end{footnotesize}}
\put(20.5,4.25){\begin{footnotesize}\mbox{$h_n-(n-2)$}\end{footnotesize}}
\put(20.5,5.25){\begin{footnotesize}\mbox{$h_n-(n-1)$}\end{footnotesize}}
\end{picture}$$
\end{definition}
\begin{example} The $h$-Ferrers diagram for $h=(2,3,3,5,5,6)$ is:
\medskip
$$\setlength{\unitlength}{.2in}
\begin{picture}(6,6)(0,0)
\linethickness{.25pt}
\put(6,0){\line(-1,0){6}}
\put(6,1){\line(-1,0){6}}
\put(6,2){\line(-1,0){5}}
\put(6,3){\line(-1,0){4}}
\put(6,4){\line(-1,0){3}}
\put(6,5){\line(-1,0){2}}
\put(6,6){\line(-1,0){1}}
\put(6,0){\line(0,1){6}}
\put(5,0){\line(0,1){6}}
\put(4,0){\line(0,1){5}}
\put(3,0){\line(0,1){4}}
\put(2,0){\line(0,1){3}}
\put(1,0){\line(0,1){2}}
\put(0,0){\line(0,1){1}}
\put(5.3,.25){6}\put(4.3,.25){5}\put(3.3,.25){5}
\put(2.3,.25){3}\put(1.3,.25){3}\put(.3,.25){2}
\put(5.3,1.25){5}\put(5.3,2.25){4}\put(5.3,3.25){3}\put(5.3,4.25){2}\put(5.3,5.25){1}
\put(4.3,1.25){4}\put(4.3,2.25){3}\put(4.3,3.25){2}\put(4.3,4.25){1}
\put(3.3,1.25){4}\put(3.3,2.25){3}\put(3.3,3.25){2}
\put(2.3,1.25){2}\put(2.3,2.25){1}
\put(1.3,1.25){2}
\end{picture}$$
\end{example}

We now convert an $h$-Ferrers diagram into a collection $\mathfrak{C}_h$ of truncated symmetric functions that generate the ideal $I_h$.

\begin{definition}[Generators $\mathfrak{C}_h$ and ideal $I_h$]
Let $h = (h_1, h_2, \ldots, h_n)$ be a Hessenberg function.  Define the set
$$\mathfrak{C}_h = \{e_{h_i-r}(1,\ldots,h_i)\; | \; 0 \leq r \leq i-1\}_{i=1}^n.$$
Let $I_h$ be the ideal generated by the set $\mathfrak{C}_h$, namely
\[I_h = \langle \mathfrak{C}_h \rangle.\]
\end{definition}

Note that each box in the $h$-Ferrers diagram corresponds to an element of $\mathfrak{C}_h$; the entry in the box is the degree of the corresponding truncated symmetric function, and the variable set is determined by the entry at the bottom of the box's column. 

\begin{example}\label{exam:h=(3334)_h-Ferrers}
Let $h=(3,3,3,4)$.  Then the $h$-Ferrers diagram, $\mathfrak{C}_h$, and $I_h$ are:
\medskip
$$\setlength{\unitlength}{.2in}
\begin{picture}(4,4)(0,0)
\linethickness{.25pt}
\put(4,0){\line(-1,0){4}}
\put(4,1){\line(-1,0){4}}
\put(4,2){\line(-1,0){3}}
\put(4,3){\line(-1,0){2}}
\put(4,4){\line(-1,0){1}}
\put(4,0){\line(0,1){4}}
\put(3,0){\line(0,1){4}}
\put(2,0){\line(0,1){3}}
\put(1,0){\line(0,1){2}}
\put(0,0){\line(0,1){1}}
\put(.3,.25){3}\put(1.3,.25){3}\put(2.3,.25){3}\put(3.3,.25){4}
\put(3.3,1.25){3}\put(3.3,2.25){2}\put(3.3,3.25){1}
\put(2.3,1.25){2}\put(2.3,2.25){1}
\put(1.3,1.25){2}
\end{picture}~,~$$
$$\mathfrak{C}_h = \left\{ e_1,\; e_2,\; e_3,\; e_4,\; e_1(1,2,3),\; e_2(1,2,3),\; e_3(1,2,3)\right\}, \mbox{ and}$$
$$
I_h = \left\langle e_1,\; e_2,\; e_3,\; e_4,\; x_1+x_2+x_3,\; x_1x_2+x_1x_3+x_2x_3,\;  x_1x_2x_3 \right\rangle.$$
\end{example}

We make several small observations about this construction.

\begin{remark}\label{observe:extremal_I_cases} \hspace{1in}
\begin{itemize}
\item Every collection $\mathfrak{C}_h$ contains the elementary symmetric functions $e_1,\ldots,e_n$.
\item If $h=(n,\ldots,n)$, the collection $\mathfrak{C}_h$ is the set of elementary symmetric functions in $x_1, x_2, \ldots, x_n$.
\item If $h=(1,2,\ldots,n)$, the ideal $I_h$ is the Tanisaki ideal for the partition $\mu = (1^n)$.  
\end{itemize}
\end{remark}
\begin{proof} \hspace{1in}
\begin{itemize}
\item The structure rules for Hessenberg functions require that $h_n = n$, so the far-right column of the $h$-Ferrers diagram is $1,2,\ldots,n$ for every Hessenberg function $h$.  By definition $\mathfrak{C}_h$ contains the elementary symmetric functions $e_1, e_2, \ldots, e_n$ for all $h$.
\item The bottom row of the $h$-Ferrers diagram for $h=(n,\ldots,n)$ is $n,n,n,\ldots,n$.  Thus the set $\mathfrak{C}_h$ contains exactly the $n$ distinct functions $e_i(1,\ldots,n)$ for $1\leq i\leq n$.
\item If $h=(1,2,\ldots,n)$ then the diagonal of the $h$-Ferrers diagram consists solely of ones, so the functions $e_1(1), e_1(1,2), \ldots, e_1(1,\ldots,n)$ all lie in $\mathfrak{C}_h$.  Since $x_1,\ldots,x_n$ are in the ideal, we conclude that $I_h = \left\langle x_1,x_2,\ldots,x_n \right\rangle$, which is the Tanisaki ideal for the Springer variety associated to $\mu=(1^n)$.  
\end{itemize}
\end{proof}

\begin{remark}
The Hessenberg variety for $h=(n,\ldots,n)$ is the full flag variety, and the ideal $I_h$ is the ideal generated by the elementary symmetric functions.  Borel proved that they are related: the cohomology of the full flag variety is the quotient $\mathbb{Z}[x_1,\ldots,x_n]/I_h$~\cite{Bor}.
\end{remark}

\subsection{Poset on ideals \texorpdfstring{$I_h$}{I}}\label{subsec:general_containment_of_I_ideals}

Let $h = (h_1,h_2,\ldots,h_n)$ and $h' = (h'_1,h'_2,\ldots,h'_n)$ be two Hessenberg functions.  We will show that if $h \geq h'$ (with the partial order from Definition~\ref{def:h_sequence_tree}) then $I_h$ is contained in $I_{h'}$.  In other words, there is an order-reversing morphism of posets from the poset on Hessenberg functions to the set of ideals $I_h$ partially ordered by inclusion.

\begin{definition}
We call two Hessenberg functions $h>h'$  \textit{adjacent} if there exists an edge between them in the Hasse diagram for Hessenberg functions, or equivalently if $h'_{i_0} = h_{i_0} - 1$ for some $i_0$ and if $h'_i = h_i$ for all $i\neq i_0$.
\end{definition}

We will prove the inclusion $I_h \subset I_{h'}$ by first assuming that $h > h'$ are adjacent, and then extending to arbitrary $h>h'$ using paths in the Hasse diagram for Hessenberg functions.

\begin{theorem}[Poset on ideals $I_h$]\label{thm:I_ideal_containment}
If $h > h'$ then $I_h \subset I_{h'}$.
\end{theorem}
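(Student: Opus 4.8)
The plan is to reduce the general statement to the case of adjacent Hessenberg functions, as the authors themselves flag in the discussion preceding the theorem. Suppose $h > h'$. Because $h \geq h'$ in the partial order, there is a directed path in the Hasse diagram $h = h^{(0)} \to h^{(1)} \to \cdots \to h^{(k)} = h'$ in which each consecutive pair is adjacent (this is exactly the remark that $h \geq h'$ iff there is a path from $h$ to $h'$ in the Hasse diagram). Since ideal inclusion is transitive, it suffices to prove $I_{h} \subset I_{h'}$ whenever $h > h'$ are adjacent; the general case then follows by composing inclusions along the path.

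So fix adjacent Hessenberg functions $h > h'$, where $h'_{i_0} = h_{i_0} - 1$ and $h'_i = h_i$ for all $i \neq i_0$. I want to show every generator in $\mathfrak{C}_h$ lies in $I_{h'}$. The generators of $\mathfrak{C}_h$ coming from columns $i \neq i_0$ are of the form $e_{h_i - r}(1,\ldots,h_i) = e_{h'_i - r}(1,\ldots,h'_i)$, which are literally elements of $\mathfrak{C}_{h'}$, hence in $I_{h'}$. The only work is with the column $i = i_0$: I must show that each $e_{h_{i_0}-r}(1,\ldots,h_{i_0})$ for $0 \leq r \leq i_0 - 1$ lies in $I_{h'}$. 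Set $r' = h_{i_0}$ for brevity, so these are the functions $e_{r'-r}(1,\ldots,r')$ for $0 \leq r \leq i_0 - 1$, i.e. the degrees $r' , r'-1, \ldots, r'-i_0+1$ — wait, degrees range over $\{h_{i_0}-r : 0\le r\le i_0-1\} = \{h_{i_0}, h_{i_0}-1,\ldots,h_{i_0}-i_0+1\}$. The key tool is Lemma~\ref{lem:decomposing_truncated_elem_symfcn}, which gives
\[
e_d(1,\ldots,r') = x_{r'} \cdot e_{d-1}(1,\ldots,r'-1) + e_d(1,\ldots,r'-1).
\]
Here $r' - 1 = h_{i_0} - 1 = h'_{i_0}$, so both terms on the right involve the variable set $\{1,\ldots,h'_{i_0}\}$ attached to column $i_0$ of the $h'$-Ferrers diagram. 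I need to check that both $e_{d-1}(1,\ldots,h'_{i_0})$ and $e_d(1,\ldots,h'_{i_0})$ are among the generators in $\mathfrak{C}_{h'}$ — i.e. that their degrees fall in the allowed range $\{h'_{i_0}, h'_{i_0}-1,\ldots,h'_{i_0}-i_0+1\}$ for column $i_0$ of $h'$ (or vanish, or are otherwise already known to be in $I_{h'}$). For $d = h_{i_0} - r$ with $0 \le r \le i_0-1$, we have $d-1 = h'_{i_0} - r$ and $d = h'_{i_0} - r + 1$; the degree $h'_{i_0}-r$ is in range for $0 \le r \le i_0-1$, and $h'_{i_0}-r+1$ is in range for $1 \le r \le i_0-1$, while the leftover case $r=0$ gives $d = h'_{i_0}+1 > h'_{i_0}$, for which $e_d(1,\ldots,h'_{i_0}) = 0$ by the convention in Definition~\ref{def:truncated_sym_fcn}. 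In every case the right-hand side of the decomposition is a $\mathbb{Z}[x_1,\ldots,x_n]$-combination of elements of $\mathfrak{C}_{h'}$, so $e_d(1,\ldots,h_{i_0}) \in I_{h'}$.

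Assembling these observations: every generator of $I_h$ lies in $I_{h'}$, so $I_h \subseteq I_{h'}$ for adjacent $h>h'$, and then transitivity along a Hasse path yields $I_h \subset I_{h'}$ for all $h > h'$. The one point requiring genuine care — and the place I'd expect to have to be most careful — is the bookkeeping on degree ranges for the single changed column $i_0$: confirming that applying Lemma~\ref{lem:decomposing_truncated_elem_symfcn} to each of the $i_0$ generators in that column produces only terms that are themselves generators of $\mathfrak{C}_{h'}$ (or zero), with the boundary case $r=0$ handled by the vanishing convention. The authors likely also note, separately, that the inclusion is strict, which follows because the two ideals define quotients of different dimensions (e.g. via the monomial bases of Theorem~\ref{thm:Basis_for_R/J}), but the stated theorem only asserts $\subseteq$ up to the notational abuse of writing $\subset$, so the containment argument above is the substance.
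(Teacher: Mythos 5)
Your proposal is correct and follows essentially the same route as the paper: reduce to adjacent $h > h'$ via a Hasse-diagram path, then apply Lemma~\ref{lem:decomposing_truncated_elem_symfcn} to each generator $e_{h_{i_0}-r}(1,\ldots,h_{i_0})$ in the changed column to express it as a $\mathbb{Z}[x_1,\ldots,x_n]$-combination of generators of $\mathfrak{C}_{h'}$, with the $r=0$ boundary term vanishing by convention. The paper's write-up is terser about the degree bookkeeping; you have spelled it out more carefully, but the argument is the same.
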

\begin{proof}
Suppose $h = (h_1,\ldots,h_n)$ and $h'=(h_1',\ldots,h_n')$ are Hessenberg functions with $h > h'$.  There is a path from $h$ to $h'$ in the Hasse diagram on Hessenberg functions since $h>h'$.  Let $h=f_1 > f_2 > \cdots> f_s = h'$ be any such path.  If $I_{f_i} \subset I_{f_{i+1}}$ for each pair of {\em adjacent} Hessenberg functions $f_i > f_{i+1}$ in this sequence, then $I_h \subset I_{h'}$.  

Hence we prove the claim when $h>h'$ are adjacent, which we do by proving that the generators $\mathfrak{C}_h$ of $I_h$ are in $I_{h'}$.  Adjacency means that for some $i_0$ we have $h_i = h_i'$ when $i \neq i_0$ and otherwise $h_{i_0}=k=h'_{i_0}+1$, or equivalently that the $h$- and $h'$-Ferrers diagrams are identical except in column $i_0$.  It suffices to show that the $i_0$ distinct generators $e_{k-r}(1,\ldots,k) \in \mathfrak{C}_h$ for $0\leq r\leq i_0-1$ also lie in $I_{h'}$.  Lemma~\ref{lem:decomposing_truncated_elem_symfcn} says that for all $r=0,1,\ldots,i_0-1,$ we have
$$e_{k-r}(1,\ldots,k) = x_k \cdot e_{(k-r)-1}(1,\ldots,k-1) + e_{k-r}(1,\ldots,k-1).$$
Since $e_k(1,\ldots,k-1)=0$, we conclude that column $i_0$ of the $h'$-Ferrers diagram produces $e_{(k-1)-r}(1,\ldots,k-1)$ for all  $r=0,\ldots,i_0-1$.  Hence $I_{h'}$ contains $\mathfrak{C}_h$ and $I_h \subset I_{h'}$ as desired.
\end{proof}

The following result is an immediate corollary.

\begin{corollary}
If $h > h'$ and $R=\mathbb{Z}[x_1,\ldots,x_n]$, then the quotient $R/I_{h'}$ surjects naturally onto the quotient $R/I_h$.
\end{corollary}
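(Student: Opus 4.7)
The plan is to deduce the corollary as a direct consequence of Theorem~\ref{thm:I_ideal_containment} via the universal property of quotient rings. First, I would apply Theorem~\ref{thm:I_ideal_containment} to the hypothesis $h > h'$ to obtain the ideal containment $I_h \subset I_{h'}$; this is the only nontrivial ingredient, and all the real work has already been done in the preceding theorem.

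Next, I would invoke the third isomorphism theorem for rings: whenever one ideal is contained in another, the corresponding quotients are linked by a natural surjective ring homomorphism induced by the identity on the ambient ring. In the setting at hand, this immediately yields the natural surjection from $R/I_{h'}$ onto $R/I_h$ asserted in the corollary. I would spell out the map explicitly on coset representatives, verify well-definedness using the ideal containment from Theorem~\ref{thm:I_ideal_containment}, and obtain surjectivity directly from the fact that every element of the target has a representative in $R$ which in turn represents an element of the source.

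I anticipate no substantive obstacle: the entire argument is essentially a one-line invocation of Theorem~\ref{thm:I_ideal_containment} combined with standard commutative algebra. The only care required is bookkeeping around the direction of the induced map and its compatibility with the quotient structure, both of which are immediate from the universal property. The conceptual content of the corollary is that the order-reversing assignment $h \mapsto I_h$ on the Hasse diagram of Hessenberg functions induces a compatible family of quotient surjections among the rings $R/I_h$, recording the poset structure of Theorem~\ref{thm:I_ideal_containment} on the ring-theoretic side and setting up the geometric comparison to cohomology rings of Hessenberg varieties that motivates the rest of the paper.
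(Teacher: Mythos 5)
Your proposal is correct and matches the paper's proof exactly: the paper also cites Theorem~\ref{thm:I_ideal_containment} for the containment $I_h \subset I_{h'}$ and then observes the induced surjection $R/I_{h'} \twoheadrightarrow R/I_h$ in a single line. You simply spell out the standard quotient-ring bookkeeping that the paper leaves implicit.
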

\begin{proof}
Since $I_h$ is contained in $I_{h'}$, the claim follows.
\end{proof}

\subsection{Generator-containment sequences}\label{subsec:special_containment_of_I_ideals}
In this subsection we prove a stronger property holds: for certain Hessenberg functions $h>h'$, the generators $\mathfrak{C}_h$ of $I_h$ are contained in the set of generators $\mathfrak{C}_{h'}$ for the ideal $I_{h'}$.  This is generally false, even for adjacent Hessenberg functions.  For example if $h=(3,4,4,4)$ and $h'=(2,4,4,4)$, then $\mathfrak{C}_h = \{e_1, e_2, e_3, e_4, x_1x_2x_3\}$ is not a subset of $\mathfrak{C}_{h'} = \{e_1, e_2, e_3, e_4, x_1x_2\}$, though $I_h \subset I_{h'}$ by Theorem~\ref{thm:I_ideal_containment}.  

First we define {\em generator-containment sequences} of sets $\mathfrak{C}_h$.  In Lemma~\ref{lem:sufficient_gen_containment_condition}, we give a sufficient condition to ensure that $\mathfrak{C}_h \subset \mathfrak{C}_{h'}$ form a generator-containment sequence.  Theorem~\ref{thm:generator_containment_sequences_connect_graph} proves that the generator-containment sequences of Hessenberg functions induce a spanning subgraph in the Hasse diagram of Hessenberg functions.

\begin{definition}
Let $h=f_1 > f_2 > \cdots > f_r=h'$ be a sequence of Hessenberg functions such that $f_{i}$ and $f_{i+1}$ are adjacent for each $i \leq r-1$.  The sequence is a \textit{generator-containment sequence} if $\mathfrak{C}_{f_{i}} \subset \mathfrak{C}_{f_{i+1}}$ for each $i \leq r-1$.
\end{definition}

\begin{lemma}\label{lem:sufficient_gen_containment_condition}
Suppose $h>h'$ are adjacent Hessenberg functions and that $i_0$ is the index with $h_{i_0} = h_{i_0}'+1$.  If $h_{i_0} = h'_k$ for some $k > i_0$ then $\mathfrak{C}_h \subset \mathfrak{C}_{h'}$.
\end{lemma}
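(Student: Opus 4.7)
The plan is to inspect what the $h$- and $h'$-Ferrers diagrams look like column by column, and show that every truncated elementary symmetric function contributed by a column of the $h$-diagram also appears as a contribution from some column of the $h'$-diagram. Since $h$ and $h'$ are adjacent, they agree on every entry except $h_{i_0}=h'_{i_0}+1$, so their diagrams are identical in every column except column $i_0$. For all columns $i \neq i_0$, the corresponding generators in $\mathfrak{C}_h$ and $\mathfrak{C}_{h'}$ are literally the same sets, so nothing has to be checked there.

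The work is therefore in column $i_0$. Write $k_0 = h_{i_0}$. The generators from column $i_0$ of $\mathfrak{C}_h$ are exactly
\[\left\{e_{k_0-r}(1,\ldots,k_0)\,:\, 0\leq r\leq i_0-1\right\},\]
a set of $i_0$ truncated elementary symmetric functions in the variable set $\{x_1,\ldots,x_{k_0}\}$. The hypothesis hands us an index $k>i_0$ with $h'_k = k_0$, so column $k$ of the $h'$-Ferrers diagram is anchored at the value $k_0$ and therefore contributes the generators
\[\left\{e_{k_0-r}(1,\ldots,k_0)\,:\, 0\leq r\leq k-1\right\}\]
to $\mathfrak{C}_{h'}$. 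Since $k>i_0$ implies $k-1\geq i_0-1$, this set of generators in $\mathfrak{C}_{h'}$ contains the generators from column $i_0$ of $\mathfrak{C}_h$. Combined with the agreement on the other columns, this gives $\mathfrak{C}_h\subset \mathfrak{C}_{h'}$.

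The argument is essentially bookkeeping in the $h$-Ferrers diagram, so I do not anticipate any real obstacle; the only point worth being careful about is that the generators from column $i_0$ of $\mathfrak{C}_h$ use the variable set $\{x_1,\ldots,x_{k_0}\}$ while the generators from column $i_0$ of $\mathfrak{C}_{h'}$ use the strictly smaller variable set $\{x_1,\ldots,x_{k_0-1}\}$ and hence cannot absorb them directly. The hypothesis $h_{i_0}=h'_k$ for some $k>i_0$ is precisely the condition that another column further to the right in the $h'$-diagram has the correct bottom entry $k_0$ and, being further right, also has enough vertical room ($k-1\geq i_0-1$ rows above the bottom) to cover every degree that appeared in column $i_0$ of the $h$-diagram.
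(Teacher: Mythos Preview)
Your proof is correct and follows essentially the same argument as the paper: both reduce to column $i_0$, observe that its generators in $\mathfrak{C}_h$ use the variable set $\{x_1,\ldots,x_{h_{i_0}}\}$, and then locate a taller column $k>i_0$ in the $h'$-Ferrers diagram with bottom entry $h'_k=h_{i_0}$ that contributes all of these generators to $\mathfrak{C}_{h'}$. Your closing paragraph explaining why column $i_0$ of $\mathfrak{C}_{h'}$ cannot itself absorb these generators is a helpful clarification that the paper leaves implicit.
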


\begin{proof}
By definition of adjacency, we know that $h_{i_0} = h_{i_0}'+1$ and $h_i = h_i'$ for all $i \neq i_0$, or equivalently that the $h$-Ferrers and $h'$-Ferrers diagrams differ only in the $i_0^{th}$ column.  It suffices to show that the generators corresponding to the entries of column $i_0$ in the $h$-Ferrers diagram also lie in $\mathfrak{C}_{h'}$, namely that the functions $e_{h_{i_0}-r}(1,\ldots,h_{i_0}) \in \mathfrak{C}_h$ for $0 \leq r \leq i_0-1$ are also in $\mathfrak{C}_{h'}$. Suppose that $h'_k=h_{i_0}$ for some $k > i_0$.  Then some column to the right of column $i_0$ in the $h'$-Ferrers diagram has the value $h_{i_0}$ in its bottom box.  This column is taller than column $i_0$ in the $h$-Ferrers diagram.  Thus the generators $e_{h_{i_0}-r}(1,\ldots,h_{i_0})$ for $0 \leq r \leq i_0-1$ lie in $\mathfrak{C}_{h'}$.
\end{proof}

\begin{theorem}\label{thm:generator_containment_sequences_connect_graph}
For each Hessenberg function $h>(1,2,\dots,n)$ there exists at least one adjacent function $h'$ with both $h>h'$ and $\mathfrak{C}_h \subset \mathfrak{C}_{h'}$.
\end{theorem}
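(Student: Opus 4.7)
The plan is to apply Lemma~\ref{lem:sufficient_gen_containment_condition}, which reduces the problem to exhibiting a single index $i_0$ such that (i) $h_{i_0} > i_0$, so decrementing respects the Hessenberg axiom (a); (ii) either $i_0 = 1$ or $h_{i_0-1} < h_{i_0}$, so the result remains nondecreasing; and (iii) $h_{i_0} = h_k$ for some $k > i_0$. Because $h$ is nondecreasing, condition (iii) is equivalent to $h_{i_0} = h_{i_0+1}$. Thus I am really searching for the leftmost index of a ``plateau'' of length at least two in the sequence $h_1,\ldots,h_n$.

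The first step, and the only genuinely combinatorial one, is the claim that every Hessenberg function $h$ with $h > (1,2,\ldots,n)$ contains a plateau of length $\geq 2$. I would prove this by contrapositive: if $h$ were strictly increasing, then the axioms $i \leq h_i \leq n$ force $h_i = i$ for every $i$ (a strictly increasing $n$-tuple in $\{1,\ldots,n\}$ with each $h_i \geq i$ must be the identity), contradicting $h > (1,2,\ldots,n)$.

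Having produced the plateau, I would let $i_0$ be its leftmost index; this automatically gives (iii) as $h_{i_0}=h_{i_0+1}$, and gives (ii) since either $i_0 = 1$ or (by leftmostness) $h_{i_0-1} < h_{i_0}$. Condition (i) then follows from axiom (a) applied at $i_0+1$: $h_{i_0} = h_{i_0+1} \geq i_0+1 > i_0$. Setting $h'_{i_0} = h_{i_0}-1$ and $h'_i = h_i$ otherwise yields an $n$-tuple; the only remaining monotonicity to check is $h'_{i_0} \leq h'_{i_0+1}$, which reads $h_{i_0}-1 \leq h_{i_0}$ and is trivial. So $h'$ is a Hessenberg function adjacent to $h$ with $h > h'$.

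Finally, I would invoke Lemma~\ref{lem:sufficient_gen_containment_condition} with $k = i_0+1$: since $h_{i_0} = h_{i_0+1} = h'_{i_0+1}$, the lemma delivers $\mathfrak{C}_h \subset \mathfrak{C}_{h'}$, completing the proof. The main potential obstacle is the plateau-existence claim, but it collapses to the one-line observation above; everything else is routine verification against the Hessenberg axioms and a direct appeal to Lemma~\ref{lem:sufficient_gen_containment_condition}.
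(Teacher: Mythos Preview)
Your proof is correct and follows essentially the same approach as the paper: both arguments locate an index $i_0$ that is the leftmost position of a plateau in $h$, decrement there, and invoke Lemma~\ref{lem:sufficient_gen_containment_condition} with $k=i_0+1$. The only difference is cosmetic---the paper selects $i_0$ as the \emph{maximal} index with $h_{i_0}>i_0$ and $h_{i_0-1}\neq h_{i_0}$ and then argues by maximality that $h_{i_0+1}=h_{i_0}$, whereas you first observe (via the contrapositive ``strictly increasing $\Rightarrow$ identity'') that some plateau exists and then take its leftmost index; your phrasing is arguably a bit cleaner.
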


\begin{proof}
Fix $h=(h_1,\ldots,h_n)$ with $h>(1,2,\dots,n)$.  Find the maximal $i_0$ for which both:
\begin{align*}
(1) &\hspace{.25in} i_0 \leq h_{i_0}-1\;\; \mbox{and}\\
(2) &\hspace{.25in} h_{i_0-1} \neq h_{i_0}.
\end{align*}
At least one $i_0$ satisfies Condition (1) since $h>(1,2,\dots,n)$.  If $h_{i_0-1}=h_{i_0}$ then $h_{i_0-1}$ also satisfies Condition (1).  So at least one $i_0$ satisfying both conditions exists.  Define the function $h'$ by $h'_{i_0}=h_{i_0}-1$ and $h'_i=h_i$ for all $i\neq i_0$.  Note that $h'$ is a Hessenberg function since $h'_{i_0} \geq h'_{i_0-1}$ by Condition (2).  

It suffices to show that there exists a value $h'_k$ with $k>i_0$ so that $h'_k=h_{i_0}$.  Then by Lemma~\ref{lem:sufficient_gen_containment_condition} we can conclude $\mathfrak{C}_h \subset \mathfrak{C}_{h'}$.  

We claim $k=i_0+1$ works.  If not then $h'_{i_0+1}>h_{i_0}$ because Hessenberg functions are nondecreasing.  Hence $$h_{i_0+1} = h'_{i_0+1} > h_{i_0} \geq i_0+1,$$ where the last inequality arises from Condition $(1)$.  Thus $h_{i_0+1}$ satisfies Conditions $(1)$ and $(2)$, contradicting the maximality of $i_0$.  We conclude $h'_{i_0+1} = h_{i_0}$ as desired.
\end{proof}

The following corollary highlights the main conclusions of Theorem~\ref{thm:generator_containment_sequences_connect_graph}.

\begin{corollary}\label{cor:All_Paths_Lead_To_Rome}
There is a generator-containment sequence from each Hessenberg function $h$ to the minimal Hessenberg function $(1,2,\ldots,n)$.  In particular,
\begin{itemize}
\item  all generating sets $\mathfrak{C}_h$ are contained in the set $\mathfrak{C}_{(1,2,\ldots,n)}$, and
\item these generator-containment sequences form a spanning subgraph in the Hasse diagram of Hessenberg functions.
\end{itemize}
\end{corollary}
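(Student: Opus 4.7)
The plan is to deduce the corollary directly by iterating Theorem~\ref{thm:generator_containment_sequences_connect_graph}. Starting from any Hessenberg function $h$, apply the theorem to produce an adjacent $h' < h$ with $\mathfrak{C}_{h} \subset \mathfrak{C}_{h'}$. If $h' = (1,2,\ldots,n)$ we stop; otherwise apply the theorem again to $h'$ and keep going. This gives a sequence
\[ h = f_1 > f_2 > \cdots > f_r \]
in the Hasse diagram, where each consecutive pair is adjacent and satisfies $\mathfrak{C}_{f_i} \subset \mathfrak{C}_{f_{i+1}}$. Hence each prefix $f_1 > \cdots > f_k$ is already a generator-containment sequence in the sense of Definition~4.5.

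The main thing to verify is that this process terminates at $(1,2,\ldots,n)$. I would argue this via a strictly decreasing integer statistic: each adjacency step decreases exactly one entry $h_{i_0}$ by one, so the sum $|f_i| := \sum_{j=1}^n (f_i)_j$ strictly decreases by $1$ at each step. Since every Hessenberg function satisfies $|f| \geq |(1,2,\ldots,n)| = \binom{n+1}{2}$, the sequence must stop, and by Theorem~\ref{thm:generator_containment_sequences_connect_graph} it can only stop at $(1,2,\ldots,n)$ (the unique Hessenberg function from which the theorem provides no further step). This establishes the first sentence of the corollary.

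For the first bullet, chain the inclusions along the sequence produced above:
\[ \mathfrak{C}_h = \mathfrak{C}_{f_1} \subset \mathfrak{C}_{f_2} \subset \cdots \subset \mathfrak{C}_{f_r} = \mathfrak{C}_{(1,2,\ldots,n)}. \]
For the second bullet, I would observe that Theorem~\ref{thm:generator_containment_sequences_connect_graph} supplies at least one outgoing ``generator-containment'' edge at every non-minimal vertex of the Hasse diagram, and the minimal vertex $(1,2,\ldots,n)$ is visited as the terminus of every such sequence. Collecting the set of all edges appearing in these sequences (equivalently, all edges $h \to h'$ in the Hasse diagram for which $\mathfrak{C}_h \subset \mathfrak{C}_{h'}$) therefore yields a subgraph that touches every vertex, which is exactly the definition of a spanning subgraph.

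The only genuine content lies in the termination argument and the assembly of inclusions; neither is hard, so I do not expect a real obstacle here. The substantive work has already been done in Theorem~\ref{thm:generator_containment_sequences_connect_graph}, and this corollary is essentially a bookkeeping consequence.
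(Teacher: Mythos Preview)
Your proposal is correct and matches the paper's approach: the paper presents this corollary as an immediate consequence of Theorem~\ref{thm:generator_containment_sequences_connect_graph} without a separate proof, and your argument simply spells out the iteration and termination that the paper leaves implicit. The termination via the strictly decreasing sum $\sum_j (f_i)_j$ and the chaining of inclusions are exactly the right bookkeeping.
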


Figure~\ref{fig:h_beta_trees}.(a) and Figure~\ref{fig:Hess_Fcn_Hasse_n=5} show the Hasse diagrams on Hessenberg functions for $n=4$ and $n=5$, respectively, with generator-containment sequences indicated using double-lined dashed edges.

\subsection{Reduced generating set for \texorpdfstring{$I_h$}{I}}\label{sec:min_gen_set}

An $h$-Ferrers diagram determines $\frac{n(n+1)}{2}$ generators for the ideal $I_h$.  The generating set $\mathfrak{C}_h$ of the generalized Tanisaki ideal $I_h$ is often highly nonminimal, like the generators of the original Tanisaki ideal.  In this section, we construct a reduced generating set for $I_h$ with only $n$ generators, using the functions corresponding to the antidiagonal of the $h$-Ferrers diagram.  We conjectured---and Galetto proved in the Appendix to this manuscript---that this set of antidiagonal generators gives a minimal generating set for $I_h$.  We further conjecture that similar methods could provide a minimal generating set for the Tanisaki ideal (see Section~\ref{section:questions}).

\begin{definition}[Antidiagonal ideal]\label{def:anti_diagonal_I_ideal}
Let $h=(h_1,\ldots,h_n)$ be a Hessenberg function.  The \textit{antidiagonal ideal} $\IAD$ in $I_h$ is the ideal generated by the functions corresponding to the boxes on the antidiagonal of the $h$-Ferrers diagram.  That is,
$$\IAD = \left\langle e_{h_i-(i-1)}(1,\ldots,h_i) \right\rangle_{i=1}^n \subseteq I_h.$$
\end{definition}

In fact we can show that each generator in $\mathfrak{C}_h$ lies in the antidiagonal ideal.  The next lemma proves this in a special case.

\begin{lemma}\label{lem:bottom_row_is_fine}
Let $h=(h_1,\ldots,h_n)$ be a Hessenberg function.  For each $i \in \{1,2,\ldots,n\}$, the generator $e_{h_i}(1,\ldots,h_i)$ lies in $\IAD$.
\end{lemma}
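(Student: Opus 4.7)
The plan is to prove this by induction on $i$. For the base case $i=1$, the antidiagonal generator coming from column~$1$ of the $h$-Ferrers diagram is $e_{h_1-(1-1)}(1,\ldots,h_1) = e_{h_1}(1,\ldots,h_1)$, which is precisely the bottom-row generator for column~$1$, so it lies trivially in $\IAD$.

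For the inductive step, I would exploit a coincidence: because $e_{h_i}(1,\ldots,h_i)$ has degree equal to the size of its variable set, the only squarefree monomial available is the full product $x_1 x_2 \cdots x_{h_i}$. Hence
\[
e_{h_i}(1,\ldots,h_i) \;=\; x_1 x_2 \cdots x_{h_i} \;=\; \left(\prod_{k=h_{i-1}+1}^{h_i} x_k\right) \cdot e_{h_{i-1}}(1,\ldots,h_{i-1}),
\]
where the product on the right is read as the empty product~$1$ in the case $h_i=h_{i-1}$. By the inductive hypothesis $e_{h_{i-1}}(1,\ldots,h_{i-1}) \in \IAD$, and multiplying by a polynomial keeps us inside the ideal, so $e_{h_i}(1,\ldots,h_i) \in \IAD$ as required.

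If one prefers to avoid invoking the explicit monomial factorization, the same displayed identity can be produced by iterating Lemma~\ref{lem:decomposing_truncated_elem_symfcn} in its boundary case $d = r$, which says $e_r(1,\ldots,r) = x_r \cdot e_{r-1}(1,\ldots,r-1)$. Applying this $h_i - h_{i-1}$ times strips off the variables $x_{h_i}, x_{h_i-1}, \ldots, x_{h_{i-1}+1}$ one at a time and terminates at $e_{h_{i-1}}(1,\ldots,h_{i-1})$, giving the same factorization.

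There is no real obstacle in this lemma: the bottom-row generators happen to be exactly the product monomials $x_1\cdots x_{h_i}$, so the reduction to the antidiagonal entry of the previous column is a one-step factoring. The more interesting work will come in the subsequent (presumably more general) lemma that handles all boxes of the $h$-Ferrers diagram, not just the bottom row, where the truncated symmetric functions are no longer single monomials and one will need the full strength of Lemma~\ref{lem:cool_combinatorial_sym_func_lemma} rather than only its degenerate $d=r$ specialization.
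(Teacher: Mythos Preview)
Your proof is correct and uses essentially the same idea as the paper: both observe that $e_{h_i}(1,\ldots,h_i)=x_1x_2\cdots x_{h_i}$ is a single monomial and then factor out a smaller bottom-row generator. The only difference is that the paper goes straight to column~$1$, writing $e_{h_i}(1,\ldots,h_i)=(x_{h_1+1}\cdots x_{h_i})\cdot e_{h_1}(1,\ldots,h_1)$ and noting that $e_{h_1}(1,\ldots,h_1)$ is already an antidiagonal generator, whereas you peel off one column at a time via induction on $i$; your induction simply unwinds to the paper's one-step factorization.
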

\begin{proof}
Consider $e_{h_i}(1,\ldots,h_i) \in \mathfrak{C}_h$.  When $i=1$, the claim is vacuously true.  If $i>1$ then
\[e_{h_i}(1,\ldots,h_i) 	= x_1 \cdots x_{h_i}= (x_{h_1+1} \cdots x_{h_i}) \cdot e_{h_1}(1,\ldots,h_1).\]
Hence $e_{h_i}(1,\ldots,h_i)$ is a multiple of $e_{h_1}(1,\ldots,h_1)$, so $e_{h_i}(1,\ldots,h_i) \in \IAD$.
\end{proof}

The previous lemma is the base case for an inductive proof in the next theorem.

\begin{theorem}[Reduced generating set for $I_h$]\label{thm:min_gen_set_for_I} Let $h=(h_1,\ldots,h_n)$ be a Hessenberg function.  Then $I_h \subseteq \IAD$ and in particular $I_h$ is generated by the generators of $\IAD$.
\end{theorem}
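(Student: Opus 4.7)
The plan is to induct on the column index $i$ of the $h$-Ferrers diagram, showing at each stage that every column-$i$ generator $e_{h_i - r}(1,\ldots,h_i)$, for $0 \leq r \leq i-1$, lies in $\IAD$. Since the reverse containment $\IAD \subseteq I_h$ is immediate from the definition, this yields $I_h = \IAD$. The base case $i=1$ is trivial: the sole column-$1$ generator $e_{h_1}(1,\ldots,h_1)$ is itself the antidiagonal entry in that column.

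For the inductive step, fix $i \geq 2$ and assume that every generator from columns $1,\ldots,i-1$ lies in $\IAD$. Two subcases are disposed of cheaply. When $r = i-1$ we are staring at the column-$i$ antidiagonal generator itself. When $r = 0$, Lemma~\ref{lem:bottom_row_is_fine} already exhibits $e_{h_i}(1,\ldots,h_i)$ as a polynomial multiple of $e_{h_1}(1,\ldots,h_1)$, which is antidiagonal.

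The substantive range is $0 < r < i-1$, and here the engine is Lemma~\ref{lem:cool_combinatorial_sym_func_lemma} applied with variable set $\{1,\ldots,h_i\}$ and splitting parameter $j = h_i - h_{i-1}$:
\[e_{h_i - r}(1,\ldots,h_i) \;=\; \sum_{t=0}^{h_i - h_{i-1}} e_t(h_{i-1}+1,\ldots,h_i)\cdot e_{h_i - r - t}(1,\ldots,h_{i-1}).\]
Reparametrizing by $s := h_{i-1} - (h_i - r - t)$, so that the second factor becomes $e_{h_{i-1} - s}(1,\ldots,h_{i-1})$, the index $s$ runs over the interval $[\,r - (h_i - h_{i-1}),\; r\,]$. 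Summands with $s < 0$ vanish because the degree exceeds the number of available variables. The surviving summands satisfy $0 \leq s \leq r \leq i-2$ and are thus polynomial multiples of generators sitting in column $i-1$ of the $h$-Ferrers diagram; by the inductive hypothesis each lies in $\IAD$, and therefore so does $e_{h_i - r}(1,\ldots,h_i)$.

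The main obstacle is the bookkeeping in the choice of splitting parameter. Taking $j = h_i - h_{i-1}$ in Lemma~\ref{lem:cool_combinatorial_sym_func_lemma} is precisely what transfers the second factors onto the variable set $\{1,\ldots,h_{i-1}\}$ of the previous column, and any smaller or larger jump would either fail to reduce or overshoot the inductive hypothesis. The range check $0 \leq s \leq r \leq i-2$ then depends crucially on the strict inequality $r < i-1$, which is why the two boundary cases $r = 0$ and $r = i-1$ must be handled separately (by Lemma~\ref{lem:bottom_row_is_fine} and by the defining property of $\IAD$, respectively), anchoring both ends of the column.
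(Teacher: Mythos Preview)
Your proof is correct and follows essentially the same approach as the paper: induction on the column index, with Lemma~\ref{lem:bottom_row_is_fine} handling the bottom box and Lemma~\ref{lem:cool_combinatorial_sym_func_lemma} (with splitting parameter $j = h_i - h_{i-1}$) expressing the remaining off-antidiagonal generators as combinations of the previous column's generators. The only cosmetic differences are that the paper takes $i=2$ as its base case and separately treats the degenerate case $h_i = h_{i-1}$, whereas your argument absorbs that case into the general step (with $j=0$ the decomposition is tautological), and your index bookkeeping via the substitution $s = h_{i-1} - (h_i - r - t)$ is a bit cleaner.
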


\begin{proof}
We will show that the generators of $I_h$ that correspond to boxes off the antidiagonal of the $h$-Ferrers diagram lie in $\IAD$, namely that if $2 \leq i \leq n$ and $0 \leq r \leq i-2$ then $e_{h_i-r}(1,\ldots,h_i) \in \IAD$.  (The only box in the first column is on the antidiagonal, so $e_{h_1}(1,\ldots,h_1) \in \IAD$ by definition.)

We induct on the columns of the $h$-Ferrers diagram moving left to right.  The base case is $i=2$.  It holds since $e_{h_2}(1,\ldots,h_2)$ lies in $\IAD$ by Lemma~\ref{lem:bottom_row_is_fine}, and the antidiagonal generator $e_{h_2-1}(1,\ldots,h_2)$ is by definition in $\IAD$.  

Assume for some column $i$ that the function $e_{h_i-r}(1,\ldots,h_i)$ is in the ideal $\IAD$ for all $r \in \{0,\ldots,i-2\}$.  We now show that for column $i+1$ and  for all $r \in \{0,\ldots,i-1\}$, the generator $e_{h_{i+1}-r}(1,\ldots,h_{i+1}) \in \IAD$.   Consider the following schematic of columns $i$ and $i+1$:
\smallskip
$$\setlength{\unitlength}{.225in}
\begin{picture}(8,9)(0,0)
\linethickness{.25pt}
\put(0,0){\line(0,1){8}}
\put(4,0){\line(0,1){9}}
\put(8,0){\line(0,1){9}}
\put(0,0){\line(1,0){8}} \put(0,1){\line(1,0){8}} \put(0,2){\line(1,0){8}}
\put(0,7){\line(1,0){8}} \put(0,8){\line(1,0){8}}
\put(4,9){\line(1,0){4}}
\put(1.8,.35){\begin{footnotesize}\mbox{$h_i$}\end{footnotesize}}
\put(5.4,.35){\begin{footnotesize}\mbox{$h_{i+1}$}\end{footnotesize}}
\put(1.3,1.35){\begin{footnotesize}\mbox{$h_i-1$}\end{footnotesize}}
\put(4.9,1.35){\begin{footnotesize}\mbox{$h_{i+1}-1$}\end{footnotesize}}
\put(2,2.5){\vdots} \put(6,3.1){\vdots}
\put(2,5.9){\vdots} \put(6,5.9){\vdots}
\put(.4,7.35){\begin{footnotesize}\mbox{$h_i-(i-1)$}\end{footnotesize}}
\put(4.2,7.35){\begin{footnotesize}\mbox{$h_{i+1}-(i-1)$}\end{footnotesize}}
\put(4.9,8.35){\begin{footnotesize}\mbox{$h_{i+1}-i$}\end{footnotesize}}

\put(0,3.5){\line(1,0){4}} \put(0,4.5){\line(1,0){8}} \put(0,5.5){\line(1,0){8}}
\put(1.3,4.85){\begin{footnotesize}\mbox{$h_i-s$}\end{footnotesize}}
\put(4.9,4.85){\begin{footnotesize}\mbox{$h_{i+1}-s$}\end{footnotesize}}
\put(.4,3.85){\begin{footnotesize}\mbox{$h_i-(s-1)$}\end{footnotesize}}
\end{picture}$$
If $h_i = h_{i+1}$ then the inductive step applies, since 
\[e_{h_{i+1}-r}(1,\ldots,h_{i+1}) =e_{h_{i}-r}(1,\ldots,h_{i})\] 
for all $r \in \{0,\ldots,i-1\}$.  Assume instead that $h_i < h_{i+1}$ and consider $h_{i+1}-s$.  If $s=0$ then $e_{h_{i+1}-s}(1,\ldots,h_{i+1})$ lies in $\IAD$ by Lemma~\ref{lem:bottom_row_is_fine}.  Suppose that $s\in\{1,\ldots,i-1\}$.  Choose $r$ so that $h_{i+1}-r = h_i$.  By Lemma~\ref{lem:cool_combinatorial_sym_func_lemma} we may write 
\begin{align*}
e_{h_{i+1}-s}(1,\ldots,h_{i+1}) = \sum_{t=0}^{h_{i+1}-h_i} e_t(h_i+1,\ldots,h_{i+1}) \cdot e_{h_{i+1}-s-t}(1,\ldots,h_i).
\end{align*}

\vspace{.1in}

\noindent We need to verify that the degrees $h_{i+1}-s-t$ are values in column $i$ whenever the function $e_{h_{i+1}-s-t}(1,\ldots,h_i)$ is nonzero.  Since $t \leq h_{i+1}-h_i$ we have
$$h_{i+1}-s-(h_{i+1}-h_i) \leq h_{i+1}-s-t $$
and so $h_i-s \leq h_{i+1}-s-t$.  Recall from Remark~\ref{rem:nonzero_terms_in_cool_sym_func_lemma} that $e_{h_{i+1}-s-t}(1,\ldots,h_i)$ is zero unless $h_{i+1}-s-t \leq h_i$.  Hence we assume $h_{i+1}-s-h_i \leq t$ and so $h_{i+1}-s-t \leq h_i$.  

We conclude that the degrees $h_{i+1}-s-t$ satisfy 
\[h_i - s \leq h_{i+1}-s-t \leq h_i,\] 
namely they are values in column $i$.  By the induction hypothesis, each $e_{h_{i+1}-s-t}(1,\ldots,h_i)$ lies in $\IAD$ and hence $e_{h_{i+1}-s-t}(1,\ldots,h_{i+1})$ also lies in $\IAD$.  Thus $I_h \subseteq \IAD$.
\end{proof}

%\begin{remark}[Reduced generating set for $I_h$ is not unique]
%Returning to Example~\ref{exam:h=(3334)_h-Ferrers}, we find the antidiagonal generators $e_1$, $x_1+x_2+x_3$, $x_1x_2+x_1x_3+x_2x_3$, and $x_1x_2x_3$ are a reduced generating set for $I_h$ when $h=(3,3,3,4)$.  However the last two generators in that list, namely $e_2(3)$ and $e_3(3)$, could have been replaced with the generators $e_2$ and $e_3$.  Thus the following three ideals all coincide:
%\medskip
%$$I_{(3344)} = \left(
%\begin{array}{c}
%e_1, e_2, e_3, e_4, \\
%x_1x_2x_3, \\
%x_1x_2+x_1x_3+x_2x_3, \\
%x_1+x_2+x_3
%\end{array}
%\right) = 
%\left(
%\begin{array}{c}
%e_1, \\
%x_1x_2x_3, \\
%x_1x_2+x_1x_3+x_2x_3, \\
%x_1+x_2+x_3
%\end{array}
%\right)
%= \left(
%\begin{array}{c}
%e_1, e_2, e_3, \\
%x_1+x_2+x_3
%\end{array}
%\right).$$
%\end{remark}

%\begin{conjecture}
%Let $h=(h_1,\ldots,h_n)$ be a Hessenberg function.  The generators of the antidiagonal ideal $I_h^{AD}$ give a minimal generating set for the generalized Tanisaki ideal $I_h$.
%\end{conjecture}

%The proof that the antidiagonal generators are in fact a minimal set of generators will be deferred until later.
%, and define a family of ideals $J_h$ whose Gr\"{o}bner bases we can identify.  In the penultimate section, we show that $I_h = J_h$ for each Hessenberg function $h$.
%We then use this result to conclude that the antidiagonal generating set for $I_h$ is minimal.

\begin{theorem}[Galetto~\cite{Galetto}]\label{thm:Galetto_proof}
The generators of $\IAD$ form a minimal generating set for $I_h$.
\end{theorem}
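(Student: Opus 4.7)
The plan is to reduce to a standard fact in commutative algebra: a regular sequence of positive-degree homogeneous elements is automatically a minimal generating set of the ideal it generates. So the goal becomes showing that the $n$ antidiagonal generators $g_i = e_{h_i-(i-1)}(1,\ldots,h_i)$ for $i=1,\ldots,n$ form a regular sequence in $R=k[x_1,\ldots,x_n]$ for a field $k$ (say $k=\mathbb{Q}$, which suffices since the generators have integer coefficients).

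The first step is to show that $I_h$ is $\mathfrak{m}$-primary, where $\mathfrak{m}=\langle x_1,\ldots,x_n\rangle$. By Remark~\ref{observe:extremal_I_cases}, the set $\mathfrak{C}_h$ contains $e_1,\ldots,e_n$, so $\langle e_1,\ldots,e_n\rangle\subseteq I_h$. The coinvariant ring $R/\langle e_1,\ldots,e_n\rangle$ is finite-dimensional (in fact of dimension $n!$), so $R/I_h$ is also finite-dimensional, and hence $I_h$ has height equal to $\dim R = n$.

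Now apply the standard Cohen--Macaulay result that in a Cohen--Macaulay ring of dimension $n$, any $n$ elements generating an ideal of height $n$ form a regular sequence. By Theorem~\ref{thm:min_gen_set_for_I}, the elements $g_1,\ldots,g_n$ generate $I_h$, so they are a regular sequence. To conclude minimality, suppose for contradiction that some $g_{i_0}$ lies in $\langle g_j : j\neq i_0\rangle$. Since all $g_i$ are homogeneous of positive degree in the graded maximal ideal, any permutation of the regular sequence remains regular, so we may reorder so that $g_{i_0}$ is the last element. Then $g_{i_0}$ maps to zero in $R/\langle g_j : j \neq i_0\rangle$, contradicting the fact that $g_{i_0}$ must be a non-zero-divisor on this quotient (which is nonzero because its defining ideal has height at most $n-1 < n$).

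The main obstacle is less a mathematical one than a bookkeeping one: one must verify that each $g_i$ is a nonzero homogeneous element of positive degree (which follows from $1\leq h_i-(i-1)\leq h_i$) and that $R/I_h$ being finite-dimensional forces $I_h$ to have height exactly $n$ in the graded sense. Galetto's proof in the appendix may well give a more direct combinatorial argument adapted to $h$-Ferrers diagrams, but the conceptual route through Cohen--Macaulay parameter ideals seems to be the most efficient way to derive minimality from the generating set of Theorem~\ref{thm:min_gen_set_for_I}.
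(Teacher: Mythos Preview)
Your argument is correct and follows the same Cohen--Macaulay template as Galetto's proof in the appendix: pass to $\mathbb{Q}[x_1,\ldots,x_n]$, show $I_h$ has height $n$, and conclude that $n$ generators must be minimal. The paper finishes via Krull's height theorem (the minimal number of generators is at least the height), while you route through the equivalent fact that $n$ elements generating a height-$n$ ideal in a Cohen--Macaulay ring form a regular sequence; either way works.

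There is one genuine difference worth noting. To establish that $R/I_h$ is finite-dimensional (equivalently $\height I_h = n$), the paper invokes the main theorem $I_h = J_h$ together with the explicit basis of $R/J_h$ from Corollary~\ref{thm:dim(R/J)}. Your route is more self-contained: you use only Remark~\ref{observe:extremal_I_cases}, namely that $e_1,\ldots,e_n \in \mathfrak{C}_h$, and the classical fact that the coinvariant ring is finite-dimensional. This avoids any forward dependence on Section~\ref{sec:Ideal_equality_section}, so your proof of minimality could in principle appear immediately after Theorem~\ref{thm:min_gen_set_for_I} rather than in an appendix.
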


Galetto's observation uses tools from commutative algebra together with results from later sections of this paper; the proof can be found in Appendix~\ref{app:minimal_gen_proof}.

%%%%%%%%%%%%%%%%%%%%%%%%%%%%%%%%%%%%%%%%%%%%
%%%%%%%%%%%%%%%%%%%%%%%%%%%%%%%%%%%%%%%%%%%%
%%  Ch.5 on J_h ideals
%%%%%%%%%%%%%%%%%%%%%%%%%%%%%%%%%%%%%%%%%%%%
%%%%%%%%%%%%%%%%%%%%%%%%%%%%%%%%%%%%%%%%%%%%

\section{Family of Hessenberg basis ideals \texorpdfstring{$J_h$}{J}}\label{sec:J_ideals}

In the last section we generalized the Tanisaki ideal to a family of ideals $I_h$.  In this section we develop machinery to give a Gr\"{o}bner basis for each ideal $I_h$.  We do this by constructing a family of ideals $J_h$ called \textit{Hessenberg basis ideals}.  We build the ideals $J_h$ using truncated complete symmetric functions, similarly to how we built the ideals $I_h$ from truncated elementary symmetric functions.  The ideals $J_h$ have several useful properties, including:
\begin{enumerate}
\item For each Hessenberg function $h=(h_1,\ldots,h_n)$, we define a set $\mathcal{J}_h$ of $n$ polynomials that generate the corresponding ideal $J_h$. (Definition~\ref{def:J-ideal_Construction})
\item The generators of $J_h$ form a Gr\"obner basis. (Theorem~\ref{thm:J_ideal_Grob})
\item Let $R=\mathbb{Z}[x_1,\ldots,x_n]$.  Then $R/J_h$ has finite rank, and a monomial basis for $R/J_h$ is easily obtained from the degree tuple corresponding to $h$. (Theorem~\ref{thm:Basis_for_R/J} and Corollary~\ref{thm:dim(R/J)})
\end{enumerate}
The first author proved that the monomial basis for $R/J_h$ gives the Betti numbers for regular nilpotent Hessenberg varieties~\cite{Mb1}, as discussed in more detail in Section~\ref{subsec:cohom_applications}.  The ideals $J_h'$ obtained from $J_h$ under the change of variables $x_i \leftrightarrow x_{n-i+1}$ also appear in work of Ding, Gasharov-Reiner, and Develin-Martin-Reiner, where the quotient $R/J_h'$ is proven to be the cohomology ring of a class of smooth Schubert varieties~\cite{GasRei}, \cite{DevMarRei} and where the Gr\"{o}bner basis for $J_h'$ was first noted~\cite[Remark 3.3]{GasRei}.

\subsection{First properties of the ideal \texorpdfstring{$J_h$}{J}} \label{subsec:constructing_the_ideal_J}

The generators of the ideal $J_h$ are naturally parametrized by the degree tuple $\beta$ corresponding to $h$ rather than the Hessenberg function itself.  Given a Hessenberg function $h=(h_1,\ldots,h_n)$, recall from Lemma~\ref{lemma:simple_formula_to_compute_beta} that the corresponding degree tuple $\beta=(\beta_n,\beta_{n-1},\ldots,\beta_1)$ is defined by 
\[\beta_i = i - \#\{h_k | h_k < i\}  \textup{     for each     } i \in \{1,\ldots,n\}.\] 

\begin{definition}[The ideal $J_h$]\label{def:J-ideal_Construction}
Let $h=(h_1,\ldots,h_n)$ be a Hessenberg function with corresponding degree tuple $\beta=(\beta_n,\beta_{n-1},\ldots,\beta_1)$.  We define a set of polynomials $\mathcal{J}_h$ by 
\[\mathcal{J}_h : = \{\tilde{e}_{\beta_n}(n),\tilde{e}_{\beta_{n-1}}(n-1,n),\ldots,\tilde{e}_{\beta_1}(1,\ldots,n)\}\]
and we define the \textit{Hessenberg basis ideal} $J_h$ by
$$J_h:=\langle \mathcal{J}_h \rangle = \langle \tilde{e}_{\beta_n}(n),\tilde{e}_{\beta_{n-1}}(n-1,n),\ldots,\tilde{e}_{\beta_1}(1,\ldots,n)\rangle.$$
\end{definition}

\begin{example}\label{example: J_h}
The Hessenberg function $h=(3,3,3,4)$ corresponds to the degree tuple $\beta=(1,3,2,1)$ so $J_h = \langle \tilde{e}_1(4),\tilde{e}_3(3,4),\tilde{e}_2(2,3,4),\tilde{e}_1(1,2,3,4) \rangle$.  That is,
$$J_h = \left(
\begin{array}{c}
x_4, \\
x_3^3 + x_3^2 x_4 + x_3 x_4^2 + x_4^3, \\
x_2^2 + x_2 x_3 + x_2 x_4 + x_3^2 + x_3 x_4 + x_4^2, \\
x_1 + x_2 + x_3 + x_4
\end{array}
\right).$$
\end{example}
\medskip

The ideals $J_h$ are partially ordered by inclusion.  As with the ideals $I_h$, this poset has the same structure as the poset on Hessenberg functions, in the following sense.

\begin{theorem}[Poset on ideals $J_h$]\label{thm:J_ideal_containment}
Let $\beta'$ be the degree tuple for $h'$.  
\begin{enumerate}
\item Choose $i \in \{1,2,\ldots,n\}$. If $d > \beta'_i$ then $\tilde{e}_d(i,i+1,\ldots,n) \in J_{h'}$.
\item If $\beta$ is the degree tuple corresponding to $h$ and $\beta > \beta'$ then $J_h \subseteq J_{h'}$.
\end{enumerate}
\end{theorem}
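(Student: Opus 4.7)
\medskip

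The plan is to prove part (1) by descending induction on $i$, leveraging the decomposition in Lemma~\ref{lem:complete_symm_func_lemma}, and then to derive part (2) immediately from part (1) by comparing generators of $J_h$ and $J_{h'}$ one at a time.

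For part (1), I would first handle the base case $i=n$: the generator is $\tilde{e}_{\beta'_n}(n) = x_n^{\beta'_n}$, so any $\tilde{e}_d(n) = x_n^d$ with $d > \beta'_n$ equals $x_n^{d-\beta'_n} \cdot x_n^{\beta'_n}$ and is manifestly in $J_{h'}$. For the inductive step, assume the claim for all indices greater than $i$ and apply Lemma~\ref{lem:complete_symm_func_lemma} with $r=i$ and $d'=\beta'_i$ to write
\[
\tilde{e}_d(i,\ldots,n) = x_i^{d-\beta'_i}\,\tilde{e}_{\beta'_i}(i,\ldots,n) + \sum_{t=1}^{d-\beta'_i} x_i^{d-(\beta'_i+t)}\,\tilde{e}_{\beta'_i+t}(i+1,\ldots,n).
\]
The first summand is a polynomial multiple of the generator $\tilde{e}_{\beta'_i}(i,\ldots,n) \in \mathcal{J}_{h'}$. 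Each remaining summand is $x_i^{d-(\beta'_i+t)}$ times $\tilde{e}_{\beta'_i+t}(i+1,\ldots,n)$, so it suffices to show each such $\tilde{e}_{\beta'_i+t}(i+1,\ldots,n)$ lies in $J_{h'}$ for $t \geq 1$.

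The key combinatorial input here is rule $(b')$ from Definition~\ref{def:deg_tuples}: $\beta'_{i+1} - \beta'_i \leq 1$, so $\beta'_{i+1} \leq \beta'_i + 1 \leq \beta'_i + t$ for every $t \geq 1$. When $t \geq 2$, we have $\beta'_i + t > \beta'_{i+1}$ strictly, hence the inductive hypothesis (applied at index $i+1$) places $\tilde{e}_{\beta'_i+t}(i+1,\ldots,n)$ in $J_{h'}$. When $t=1$, either $\beta'_{i+1} = \beta'_i + 1$ (in which case $\tilde{e}_{\beta'_i+1}(i+1,\ldots,n)$ is literally the generator $\tilde{e}_{\beta'_{i+1}}(i+1,\ldots,n) \in \mathcal{J}_{h'}$) or $\beta'_{i+1} \leq \beta'_i$ (in which case $\beta'_i + 1 > \beta'_{i+1}$ strictly and the induction hypothesis again applies). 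This case analysis at $t=1$ is the most delicate point and is the step I expect to require the most care.

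For part (2), suppose $\beta \geq \beta'$ componentwise. It suffices to show $\mathcal{J}_h \subseteq J_{h'}$. Each generator of $J_h$ has the form $\tilde{e}_{\beta_i}(i,\ldots,n)$. If $\beta_i = \beta'_i$, this is already a generator of $J_{h'}$. If $\beta_i > \beta'_i$, then by part (1), $\tilde{e}_{\beta_i}(i,\ldots,n) \in J_{h'}$. Either way, every generator of $J_h$ lies in $J_{h'}$, so $J_h \subseteq J_{h'}$. The main obstacle throughout is really just the boundary bookkeeping in part (1); once that induction is set up correctly, part (2) is a one-line consequence.
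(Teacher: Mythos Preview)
Your proof is correct and follows essentially the same route as the paper: descending induction on $i$, the same decomposition of $\tilde{e}_d(i,\ldots,n)$ (which the paper derives inline rather than by citing Lemma~\ref{lem:complete_symm_func_lemma}), and the same appeal to rule~$(b')$ to feed the inductive hypothesis. The one cosmetic difference is that the paper runs the induction with the hypothesis $d \geq \beta'_{i+1}$ rather than $d > \beta'_{i+1}$, which absorbs your $t=1$ case split automatically.
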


\begin{proof}
Our proof is by induction.  The inductive hypothesis is that whenever $d \geq \beta'_{i+1}$ the function $\tilde{e}_d(i+1,i+2,\ldots,n) \in J_{h'}$.  The base case occurs when $i+1=n$.  In this case we have $\tilde{e}_d(n) = x^d_n$ and so $\tilde{e}_d(n) \in J_{h'}$ whenever $d \geq \beta'_n$.

To prove the inductive step, we partition the terms of $\tilde{e}_d(i,i+1,\ldots,n)$ according to the power of $x_i$.  We will show that
\begin{equation} \label{eqn:J-containment}
\tilde{e}_d(i,i+1,\ldots,n) = x_i^{d-\beta'_i} \cdot \tilde{e}_{\beta'_i}(i,i+1,\ldots,n) + \sum_{t=0}^{d-\beta'_i-1} x_i^t \cdot \tilde{e}_{d-t}(i+1,i+2,\ldots,n).
\end{equation}
Suppose that $x_i^{\alpha_i}$ is the power of $x_i$ that appears in a given monomial term of the function $\tilde{e}_d(i,i+1,\ldots,n)$.  Consider the following cases separately: when $\alpha_i \geq d-\beta'_i$ and when $\alpha_i = t$ for each $t=0,1,2,\ldots,d-\beta'_i-1$.  On the one hand, the terms in which $\alpha_i \geq d-\beta'_i$ are exactly the terms of $x_i^{d-\beta'_i} \cdot \tilde{e}_{\beta'_i}(i,i+1,\ldots,n)$.  On the other hand, the terms with $\alpha_i=t$ are exactly the terms $x_i^t \cdot \tilde{e}_{d-t}(i+1,i+2,\ldots,n)$, for each $t$ with $0 \leq t \leq d-\beta'_i-1$.  This proves Equation~\eqref{eqn:J-containment}.

The sum in Equation~\eqref{eqn:J-containment} contains those $\tilde{e}_{d-t}(i+1,i+2,\ldots,n)$ for which
\[\beta'_i+1 \leq d-t \leq d.\]
Definition~\ref{def:deg_tuples}.(b) of degree tuples guarantees that $\beta'_{i+1} \leq \beta'_i+1$, and hence the induction hypothesis ensures that each $\tilde{e}_{d-t}(i+1,i+2,\ldots,n)$ lies in $J_{h'}$.  We conclude that if $d \geq \beta'_{i}$ then $\tilde{e}_d(i,i+1,\ldots,n) \in J_{h'}$.  

By induction, for each $i$ we have $\tilde{e}_d(i,i+1,\ldots,n) \in J_{h'}$ whenever $d \geq \beta'_{i}$.  This proves Part~(1).  In particular if $\beta > \beta'$ then $\tilde{e}_{\beta_i}(i,i+1,\ldots,n) \in J_{h'}$ for each $i$.  We conclude that if $\beta > \beta'$ then $J_{h} \subset J_{h'}$ as desired.
\end{proof}

\subsection{Generators form a Gr\"obner basis}\label{subsec:J_is_Grobner}
We now prove that the set $\mathcal{J}_h$ forms a Gr\"{o}bner basis for $J_h$.  More detail on Gr\"obner bases can be found in classical texts such as~\cite{Cox}.

Let $R$ be the polynomial ring $\mathbb{Z}[x_1,\ldots,x_n]$.  Let $\X$ and $\Y$ be the monomials $x_1^{\alpha_1}x_2^{\alpha_2}\cdots x_n^{\alpha_n}$ and $x_1^{\beta_1}x_2^{\beta_2}\cdots x_n^{\beta_n}$ with exponents $\alpha=(\alpha_1,\ldots,\alpha_n)$ and $\beta=(\beta_1,\ldots,\beta_n)$ respectively.  Let $lex$ denote the \textit{lexicographic monomial ordering} in $R$.  In other words, if the leftmost nonzero entry in the vector $\alpha - \beta \in \mathbb{Z}^n$ is positive, then $\X >_{lex} \Y$.  For example $x_1 x_2^2 >_{lex} x_1 x_2$ since $\alpha - \beta = (0,1)$.

\begin{remark}\label{rem:term_ordering_does_not_matter}
\textit{Graded lexicographic order}, or $grlex$, is another common monomial order.  We deal exclusively with homogeneous functions, where $lex$ and $grlex$ coincide.
\end{remark}

The \textit{leading monomial} of a polynomial $f$ in $R$, denoted $LM(f)$, is the term whose monomial is greatest with respect to the ordering.  For example, the leading monomial of $x_3^4 + x_1x_2^3 + x_1^2x_2x_4 + x_1x_2x_3x_4$ is the monomial $x_1^2x_2x_3$.  The \textit{leading term} of a polynomial $f$, denoted $LT(f)$, is $LM(f)$ scaled by its coefficient (if any).   In our applications, $LM(f)$ and $LT(f)$ coincide, since each monomial in our symmetric functions has coefficient one.

If $I$ is an ideal in $R$ then $\LT{I}$ denotes the ideal generated by the leading terms of each element in $I$.  If $I$ is finitely generated, say by $f_1, \ldots, f_s$, then the ideal $\left\langle LT(f_1), \ldots, LT(f_s) \right\rangle$ is contained in $\LT{I}$.  The $f_i$ must form a {\em Gr\"{o}bner basis} for the converse to be true.

\begin{definition}[Gr\"obner basis]\label{def:Grobner_Basis}
The set $G=\{g_1,\ldots,g_t\}$ is a Gr\"obner basis for an ideal $I$ in $R$ if and only if $\LT{I} = \left\langle LT(g_1),\ldots,LT(g_t) \right\rangle$.
\end{definition}

Denote the least common multiple by LCM.  Polynomials $f_1 \neq f_2$ are relatively prime if
$$LCM(LM(f_1),LM(f_2)) = LM(f_1)\cdot LM(f_2).$$
Proposition~\ref{prop:Sufficient_Criterion_for_Grobner} gives one of many sufficient criteria to determine whether a set of polynomials $G=\{g_1,\ldots,g_t\}$ forms a Gr\"obner basis for the ideal they generate.

\begin{proposition}[Cox-Little-O'Shea~\cite{Cox}]\label{prop:Sufficient_Criterion_for_Grobner}
Let $G=\{g_1,\ldots,g_t\}$ be a set of polynomials in $R$.  If the leading monomials of the polynomials in $G$ are pairwise relatively prime, then $G$ is a Gr\"obner basis for the ideal they generate.
\end{proposition}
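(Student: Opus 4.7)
The plan is to deduce this from Buchberger's S-pair criterion, which says that $G=\{g_1,\ldots,g_t\}$ is a Gr\"obner basis for $\langle G \rangle$ if and only if every S-polynomial
\[ S(g_i,g_j) = \frac{\mathrm{LCM}(LM(g_i),LM(g_j))}{LT(g_i)}\, g_i - \frac{\mathrm{LCM}(LM(g_i),LM(g_j))}{LT(g_j)}\, g_j \]
has a ``standard representation'' with respect to $G$, meaning it can be written as $\sum_k a_k g_k$ with every $LM(a_k g_k) \le \mathrm{LCM}(LM(g_i),LM(g_j))$. (This latter formulation of Buchberger is typically stated as a lemma equivalent to the S-polynomial reducing to zero under division.)

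First I would fix an arbitrary pair $g_i \ne g_j$ in $G$ and write $g_i = c_i\,LM(g_i) + r_i$ and $g_j = c_j\,LM(g_j) + r_j$, where $c_i,c_j$ are the leading coefficients and every monomial of $r_i$ (resp.\ $r_j$) is strictly less than $LM(g_i)$ (resp.\ $LM(g_j)$) in the term order. Under the hypothesis that $LM(g_i)$ and $LM(g_j)$ are relatively prime, the LCM factors as $LM(g_i)\,LM(g_j)$, so the definition of the S-polynomial simplifies to
\[ S(g_i,g_j) = \tfrac{1}{c_i}\,LM(g_j)\,g_i - \tfrac{1}{c_j}\,LM(g_i)\,g_j. \]

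Next I would carry out the cancellation explicitly by substituting the decomposition $g_i = c_i LM(g_i) + r_i$ and $g_j = c_j LM(g_j) + r_j$ into this expression; the two $LM(g_i)\,LM(g_j)$ terms cancel, leaving
\[ S(g_i,g_j) = \tfrac{1}{c_i c_j}\bigl( r_i\,g_j - r_j\,g_i \bigr), \]
after re-expressing $LM(g_j)\,r_i$ as $\tfrac{1}{c_j}r_i(g_j - r_j) = \tfrac{1}{c_j}(r_i g_j - r_i r_j)$ and symmetrically for the other term. This is the key identity of the proof.

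Finally I would verify the degree bound required by Buchberger's criterion: since every monomial of $r_i$ is strictly below $LM(g_i)$, we have $LM(r_i\, g_j) < LM(g_i)\,LM(g_j) = \mathrm{LCM}(LM(g_i),LM(g_j))$, and symmetrically for $r_j g_i$. Thus the identity above is a standard representation of $S(g_i,g_j)$ in terms of $g_i$ and $g_j$ alone, so $S(g_i,g_j)$ reduces to $0$ modulo $G$. Since this holds for every pair, Buchberger's criterion concludes that $G$ is a Gr\"obner basis. The only genuinely delicate point in the argument is the bookkeeping in the cancellation step and in verifying the strict inequality on leading monomials; everything else is formal. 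Since the statement is quoted directly from Cox--Little--O'Shea, in the paper itself one would simply cite \cite{Cox} rather than reproduce this argument.
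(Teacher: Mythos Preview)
Your argument is correct and is essentially the standard proof given in Cox--Little--O'Shea (the ``relatively prime leading monomials'' criterion, proved there via exactly this $S$-pair cancellation identity). The paper itself does not supply a proof of this proposition; it is stated with attribution to~\cite{Cox} and used as a black box, which you correctly anticipate in your final sentence.

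One small caveat worth being aware of: your computation divides by the leading coefficients $c_i,c_j$, so as written it assumes the base ring is a field (Cox--Little--O'Shea work over a field $k$, while the paper declares $R=\mathbb{Z}[x_1,\ldots,x_n]$). This is harmless here because the paper explicitly notes that in its applications every leading coefficient is $1$, so $LT=LM$ and no inversion is needed; but if you wanted the argument to be literally valid over $\mathbb{Z}$ you would either restrict to monic $g_i$ or invoke the version of Buchberger's criterion for polynomial rings over a PID.
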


We use this to prove that $\mathcal{J}_h$ forms a Gr\"{o}bner basis for $J_h$, extending a classical result of Cauchy-Valibouze-Mora-Sala for $h=(n,n,\ldots,n)$, as described in Theorem~\ref{thm:Maximal_case_equality_of_I_and_J}.  After this manuscript went to press, we learned that Gasharov-Reiner proved a version of this theorem up to the change of variables $x_i \leftrightarrow x_{n-i+1}$ \cite[Remark 3.3]{GasRei}.

\begin{theorem}\label{thm:J_ideal_Grob}
The generating set $\mathcal{J}_h$ is a Gr\"obner basis for the ideal $J_h$ with respect to $lex$ or $grlex$ orderings.
\end{theorem}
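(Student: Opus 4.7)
The plan is to apply Proposition~\ref{prop:Sufficient_Criterion_for_Grobner} directly: I will identify the leading monomial of each generator in $\mathcal{J}_h$ under the lexicographic order and observe that these leading monomials live in pairwise disjoint variable sets, so they are automatically pairwise relatively prime.

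First I would fix the convention $x_1 >_{lex} x_2 >_{lex} \cdots >_{lex} x_n$ and compute $LM(\tilde{e}_{\beta_i}(i,i+1,\ldots,n))$ for each $i \in \{1,2,\ldots,n\}$. Since $\tilde{e}_{\beta_i}(i,i+1,\ldots,n)$ is the sum of all degree-$\beta_i$ monomials in the variables $x_i, x_{i+1},\ldots,x_n$, and the monomial lexicographically largest among these is the one that concentrates all degree on the largest variable $x_i$, we get
\[
LM\bigl(\tilde{e}_{\beta_i}(i,i+1,\ldots,n)\bigr) = x_i^{\beta_i}.
\]
(Since each monomial has coefficient $1$, $LT = LM$ here.) Rule (a') of Definition~\ref{def:deg_tuples} guarantees $\beta_i \geq 1$, so this is a genuine nonzero pure power of $x_i$.

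Next I would observe that for $i \neq j$ the leading monomials $x_i^{\beta_i}$ and $x_j^{\beta_j}$ involve distinct single variables, so
\[
\mathrm{LCM}\bigl(x_i^{\beta_i},\, x_j^{\beta_j}\bigr) = x_i^{\beta_i} \cdot x_j^{\beta_j},
\]
i.e., they are pairwise relatively prime. Proposition~\ref{prop:Sufficient_Criterion_for_Grobner} then immediately yields that $\mathcal{J}_h$ is a Gr\"obner basis for $J_h = \langle \mathcal{J}_h \rangle$ with respect to $lex$.

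Finally, to handle $grlex$, I would invoke Remark~\ref{rem:term_ordering_does_not_matter}: each $\tilde{e}_{\beta_i}(i,i+1,\ldots,n)$ is homogeneous of degree $\beta_i$, so all its monomials have the same total degree and $grlex$ breaks ties exactly as $lex$ does. Therefore $LM_{grlex} = LM_{lex} = x_i^{\beta_i}$ for each generator, and the same pairwise coprimality argument shows $\mathcal{J}_h$ is also a Gr\"obner basis under $grlex$. There is no real obstacle here once one notices that the variables sets $\{x_i,\ldots,x_n\}$ are nested and the largest variable of each set is different, making the leading monomials live in disjoint one-variable subrings; the only item to verify carefully is that $x_i^{\beta_i}$ really is the lex-leading term of a complete symmetric function in the tail variables, which is standard.
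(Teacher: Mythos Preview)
Your proof is correct and follows essentially the same approach as the paper's: identify the leading monomial of each generator as $x_i^{\beta_i}$, observe these are pairwise relatively prime, and invoke Proposition~\ref{prop:Sufficient_Criterion_for_Grobner}. Your write-up adds a little more justification (why $x_i^{\beta_i}$ is the lex-leading monomial, and the appeal to $\beta_i \geq 1$), but the argument is the same.
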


\begin{proof}
Denote the generators by $f_i = \tilde{e}_{\beta_i}(i,\ldots,n)$ for $1 \leq i \leq n$.  To show that the generating set $\mathcal{J}_h=\{f_1,f_2,\ldots,f_n\}$ is a Gr\"obner basis, it suffices to show that the leading monomials of $f_i$ and $f_j$ are relatively prime for all $i \neq j$, by Proposition~\ref{prop:Sufficient_Criterion_for_Grobner}.  The leading monomial of $f_i$ is $x_i^{\beta_i}$ for each $i$ for both lex and grlex.  If $i\neq j$ then 
$$LCM(LM(f_i),LM(f_j)) = LCM(x_i^{\beta_i}, x_j^{\beta_j}) = x_i^{\beta_i}x_j^{\beta_j} = LM(f_i)\cdot LM(f_j).$$
We conclude that $\mathcal{J}_h$ is a Gr\"obner basis for the ideal $J_h$, as desired.
\end{proof}

\begin{remark}\label{rmk:frank's remark}
The analogous claim for $h=(n,n,\ldots,n)$ is a classical result cited in Theorem~\ref{thm:Maximal_case_equality_of_I_and_J}.  In that result, the functions $\mathcal{J}_h$ form a Gr\"{o}bner basis for {\em all} monomial term orders of the form $x_{\pi(1)}<\cdots<x_{\pi(n)}$ for $\pi \in S_n$.  This is false in our generality because the functions $\tilde{e}_{\beta_i}(i,\ldots,n)$ are rarely symmetric.  For instance, if $n=4$ and the term order is $x_4 > x_3 > x_2 > x_1$, then each generator in $\mathcal{J}_h$ has leading term $x_4^d$ for some $d$.  However, the ideal in Example~\ref{example: J_h} contains $x_1+x_2+x_3$, whose leading term is $x_3 \not \in  \langle x_4\rangle$.
\end{remark}

\subsection{The quotient ring \texorpdfstring{$R/J_h$}{R/J} and regular nilpotent Hessenberg varieties}\label{subsec:cohom_applications}
Gr\"{o}bner bases for the ideal $I$ can be used to construct a simple, elegant basis for the quotient $R/I$.  The quotient $R/J_h$ gives the Betti numbers for a family of varieties called regular nilpotent Hessenberg varieties, as proven in earlier work by the first author~\cite[Theorem 3.3.3]{Mb1}.  In this subsection we construct the basis for $R/J_h$ used in his work.

Cox-Little-O'Shea sketch a proof of the following~\cite{Cox}; details are in~\cite[Appendix A.2]{Mb-thesis}.

\begin{proposition}[Cox-Little-O'Shea~\cite{Cox}]\label{thm:R/I_Basis}
Let $I$ be an ideal in $R$.   The $\mathbb{Q}$-span of the quotient $R/I$ is isomorphic to the $\mathbb{Q}$-span of the set $\{\X \;|\; \X \notin \LT{I} \}$ as $\mathbb{Q}$-vector spaces.
\end{proposition}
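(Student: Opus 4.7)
The plan is to construct an explicit $\mathbb{Q}$-linear isomorphism using the multivariate division algorithm relative to a Gr\"obner basis. First I would fix a Gr\"obner basis $G = \{g_1, \ldots, g_t\}$ for $I$, which exists because $R$ is Noetherian. By Definition~\ref{def:Grobner_Basis}, $\LT{I} = \langle LT(g_1), \ldots, LT(g_t) \rangle$. Let $B = \{\X \;:\; \X \notin \LT{I}\}$ denote the set of \emph{standard monomials} (those not lying in the initial ideal).

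Next, I would invoke the multivariate division algorithm to write each $f \in R$ as $f = q_1 g_1 + \cdots + q_t g_t + r$, where no monomial of $r$ is divisible by any $LT(g_i)$; equivalently, $r$ lies in the $\mathbb{Q}$-span of $B$. The central claim is that $r$ depends only on the coset $f + I$ and not on the choices made during division. Granting this, the rule $\phi(f+I) = r$ defines a $\mathbb{Q}$-linear map $\phi : R/I \to \mathrm{span}_{\mathbb{Q}}(B)$. Surjectivity is immediate, since each $\X \in B$ is already in normal form and hence is its own remainder. Injectivity follows from the same uniqueness statement: if $\phi(f+I) = 0$ then $r=0$, so $f = \sum q_i g_i \in I$ and $f+I$ is the zero coset.

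The main obstacle, and the one place where the Gr\"obner basis hypothesis is genuinely used, is the uniqueness of the remainder. Suppose $f = \sum q_i g_i + r = \sum q_i' g_i + r'$ are two valid division-algorithm outputs. Then $r - r' = \sum (q_i - q_i') g_i \in I$. If $r - r' \neq 0$, then $LT(r - r') \in \LT{I} = \langle LT(g_1), \ldots, LT(g_t) \rangle$, so some $LT(g_i)$ divides $LT(r-r')$. But every monomial of $r-r'$ appears (up to sign) in $r$ or in $r'$, each of which by construction contains no monomial divisible by any $LT(g_i)$. This contradiction forces $r = r'$, and the proposition follows.
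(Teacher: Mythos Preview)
Your argument is correct and is precisely the standard Gr\"obner basis proof. Note that the paper itself does not supply a proof of this proposition: it simply cites Cox--Little--O'Shea and refers to an appendix of the first author's thesis for details. So you are filling in exactly the argument the paper defers to its references, and there is nothing to compare.

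One small remark: your uniqueness paragraph shows that two divisions of the \emph{same} $f$ yield the same remainder, but well-definedness of $\phi$ on cosets also requires that if $f' - f \in I$ then their remainders agree. This follows by the identical argument (since then $r - r' \in I$ and every monomial of $r - r'$ is standard), so the gap is cosmetic rather than substantive.
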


To find a basis for $R/J_h$ we must understand more precisely the ideal $\LT{J_h}$ generated by the leading terms of elements in the ideal $J_h$.

\begin{lemma}
Fix  a Hessenberg function $h=(h_1,\ldots,h_n)$.  The ideal $\LT{J_h}$ is the monomial ideal $\langle x_1^{\beta_1},x_2^{\beta_2},\ldots,x_n^{\beta_n} \rangle$.
\end{lemma}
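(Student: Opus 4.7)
The plan is to derive this lemma as an essentially immediate consequence of Theorem~\ref{thm:J_ideal_Grob} together with the definition of a Gröbner basis. Since Theorem~\ref{thm:J_ideal_Grob} has already established that $\mathcal{J}_h = \{\tilde{e}_{\beta_i}(i,\ldots,n)\}_{i=1}^n$ is a Gröbner basis for $J_h$ under the $lex$ (equivalently $grlex$, by Remark~\ref{rem:term_ordering_does_not_matter}) ordering, Definition~\ref{def:Grobner_Basis} immediately yields
\[
\langle LT(J_h)\rangle \;=\; \bigl\langle LT(\tilde{e}_{\beta_1}(1,\ldots,n)),\, LT(\tilde{e}_{\beta_2}(2,\ldots,n)),\,\ldots,\, LT(\tilde{e}_{\beta_n}(n))\bigr\rangle.
\]
So the only remaining task is to identify each $LT(\tilde{e}_{\beta_i}(i,\ldots,n))$.

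First I would fix the convention $x_1 > x_2 > \cdots > x_n$ that makes $\mathcal{J}_h$ a Gröbner basis in the preceding theorem. For each $i$, the polynomial $\tilde{e}_{\beta_i}(i,i+1,\ldots,n)$ is by Definition~\ref{def:truncated_sym_fcn} a sum of degree-$\beta_i$ monomials in the variables $x_i, x_{i+1}, \ldots, x_n$, each with coefficient $1$. Among these, the unique lex-largest is the one concentrating all of the degree on the smallest-indexed (hence largest) variable, namely $x_i^{\beta_i}$. Thus $LT(\tilde{e}_{\beta_i}(i,\ldots,n)) = x_i^{\beta_i}$; this observation was already recorded inside the proof of Theorem~\ref{thm:J_ideal_Grob}, so I would simply cite it.

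Substituting into the Gröbner-basis identity above gives
\[
\langle LT(J_h)\rangle \;=\; \langle x_1^{\beta_1},\, x_2^{\beta_2},\, \ldots,\, x_n^{\beta_n}\rangle,
\]
as claimed. There is no genuine obstacle here: both inputs (that $\mathcal{J}_h$ is a Gröbner basis and that the leading term of $\tilde{e}_{\beta_i}(i,\ldots,n)$ is $x_i^{\beta_i}$) are already in hand, and the lemma is a one-line combination of them. The proof is therefore short and essentially bookkeeping.
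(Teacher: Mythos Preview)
Your proposal is correct and follows essentially the same approach as the paper: both invoke Theorem~\ref{thm:J_ideal_Grob} to conclude that $\mathcal{J}_h$ is a Gr\"obner basis, apply Definition~\ref{def:Grobner_Basis} to obtain $\LT{J_h} = \langle LT(f_1),\ldots,LT(f_n)\rangle$, and then observe that $LT(\tilde{e}_{\beta_i}(i,\ldots,n)) = x_i^{\beta_i}$.
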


\begin{proof}
Denote $f_i = \tilde{e}_{\beta_i}(i,\ldots,n)$ for each $1 \leq i \leq n$.  The set $\{f_1, f_2, \ldots, f_n\}$ is a Gr\"{o}bner basis for the ideal $J_h$ by Theorem~\ref{thm:J_ideal_Grob}.  By definition this means 
\[\LT{J_h} = \left\langle LT(f_1),\ldots,LT(f_n) \right\rangle.\]  
Since each $LT(f_i)=x_i^{\beta_i}$, the ideal $\left\langle LT(f_1),\ldots,LT(f_n) \right\rangle = \langle x_1^{\beta_1},x_2^{\beta_2},\ldots,x_n^{\beta_n} \rangle$ as desired.
\end{proof}

The next two claims follow quickly from the results we have assembled.

\begin{theorem}[Basis for $R/J_h$]\label{thm:Basis_for_R/J}  Let $J_h$ be the ideal corresponding to the Hessenberg function $h=(h_1,\ldots,h_n)$.  Then $R/J_h$ has basis
$$\left\lbrace x_1^{\alpha_1} x_2^{\alpha_2} \cdots x_n^{\alpha_n} \;\; \vline \;\; 0\leq\alpha_i\leq\beta_i-1,\;\; i=1,\ldots,n \right\rbrace.$$
\end{theorem}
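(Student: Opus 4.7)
The plan is to combine the structural result from Proposition~\ref{thm:R/I_Basis} with the explicit description of $\LT{J_h}$ given in the lemma immediately preceding the theorem. Roughly, once one knows that the leading term ideal is a very simple monomial ideal, reading off a standard monomial basis is essentially bookkeeping.

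First I would invoke Proposition~\ref{thm:R/I_Basis}, which says that $R/J_h$ is isomorphic as a $\mathbb{Q}$-vector space to the $\mathbb{Q}$-span of the monomials $\mathbf{x}^\alpha \notin \LT{J_h}$. Next I would apply the lemma just proved, which uses the Gr\"obner basis property from Theorem~\ref{thm:J_ideal_Grob} to identify
\[\LT{J_h} = \langle x_1^{\beta_1},x_2^{\beta_2},\ldots,x_n^{\beta_n}\rangle.\]
So the basis problem reduces to determining exactly which monomials fail to lie in this monomial ideal.

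The key observation is that a monomial ideal generated by pure powers $x_i^{\beta_i}$ is especially transparent: a monomial $x_1^{\alpha_1}\cdots x_n^{\alpha_n}$ belongs to $\langle x_1^{\beta_1},\ldots,x_n^{\beta_n}\rangle$ if and only if some generator $x_i^{\beta_i}$ divides it, i.e., if and only if $\alpha_i \geq \beta_i$ for at least one index $i$. Taking the complement, the monomials avoided by $\LT{J_h}$ are exactly those $\mathbf{x}^\alpha$ with $0 \leq \alpha_i \leq \beta_i - 1$ for every $i \in \{1,\ldots,n\}$. Combining with the isomorphism above yields the stated basis for $R/J_h$.

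There is essentially no obstacle here: all the substantive work has been done in Theorem~\ref{thm:J_ideal_Grob} (establishing the Gr\"obner basis) and in the lemma identifying $\LT{J_h}$. The only thing one must be careful about is the standard fact about containment in monomial ideals generated by pure powers, which I would state in a single sentence rather than prove from scratch. Thus the proof will be short, amounting to chaining together Proposition~\ref{thm:R/I_Basis}, the preceding lemma, and the divisibility observation.
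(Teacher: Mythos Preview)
Your proposal is correct and follows essentially the same route as the paper's own proof: invoke Proposition~\ref{thm:R/I_Basis}, use the preceding lemma identifying $\LT{J_h}=\langle x_1^{\beta_1},\ldots,x_n^{\beta_n}\rangle$, and read off the standard monomials via the divisibility criterion for monomial ideals generated by pure powers. The paper's version is slightly terser but the logical content is identical.
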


\begin{proof}
By Theorem~\ref{thm:R/I_Basis}, the quotient $R/J_h$ has basis $\{\X \;|\; \X \notin \LT{J_h} \}$.  By definition $\X \notin \LT{J_h}$ implies none of the $x_i^{\beta_i}$ divides $\X$.  Thus the exponent of $x_i$ in the monomial $\X$ cannot exceed $\beta_i-1$.  So $\X=x_1^{\alpha_1} x_2^{\alpha_2} \cdots x_n^{\alpha_n}$ must satisfy $\alpha_i \in \{0,1,\ldots,\beta_i-1\}$ for each $1 \leq i \leq n$, as desired.
\end{proof}

\begin{corollary}\label{thm:dim(R/J)}
For every Hessenberg function $h$,  the rank of $R/J_h$ equals $\displaystyle \prod_{i=1}^n \beta_i$.  In particular, the quotient $R/J_h$ has finite rank.
\end{corollary}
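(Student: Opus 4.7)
The plan is to derive the corollary directly from Theorem~\ref{thm:Basis_for_R/J}, which already exhibits an explicit monomial $\mathbb{Q}$-basis of $R/J_h$. So the corollary reduces to a counting problem on the index set of that basis.

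First I would recall that Theorem~\ref{thm:Basis_for_R/J} provides the basis
\[
\mathcal{B}_h = \left\{ x_1^{\alpha_1} x_2^{\alpha_2} \cdots x_n^{\alpha_n} \;\middle|\; 0 \leq \alpha_i \leq \beta_i - 1 \text{ for each } i = 1, \ldots, n \right\},
\]
where $\beta = (\beta_n, \beta_{n-1}, \ldots, \beta_1)$ is the degree tuple corresponding to $h$. The rank of $R/J_h$ is therefore $|\mathcal{B}_h|$.

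Next I would count $\mathcal{B}_h$. Since each exponent $\alpha_i$ can be chosen independently from the set $\{0, 1, \ldots, \beta_i - 1\}$, which has cardinality $\beta_i$, the multiplication principle gives
\[
|\mathcal{B}_h| = \prod_{i=1}^n \beta_i.
\]
Finally, finiteness follows because rule $(a')$ of Definition~\ref{def:deg_tuples} forces $1 \leq \beta_i \leq i$ for each $i$, so each factor is a positive integer and the product is a well-defined finite integer (in fact bounded above by $n!$).

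There is no real obstacle here: the corollary is essentially a restatement of Theorem~\ref{thm:Basis_for_R/J} combined with an elementary product count, and all the substantive work has already been done in establishing the Gröbner basis (Theorem~\ref{thm:J_ideal_Grob}) and the basis for the quotient (Theorem~\ref{thm:Basis_for_R/J}). The only thing worth double-checking is that the degree tuple satisfies $\beta_i \geq 1$ so that each factor in the product is nonzero; this is guaranteed by rule $(a')$ in Definition~\ref{def:deg_tuples}.
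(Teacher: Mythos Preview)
Your proposal is correct and matches the paper's approach: the paper states this corollary immediately after Theorem~\ref{thm:Basis_for_R/J} without further proof, since it is exactly the product count you describe. The only minor quibble is that the paper works over $R=\mathbb{Z}[x_1,\ldots,x_n]$ and speaks of rank rather than $\mathbb{Q}$-dimension, but the counting argument is identical.
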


%Theorem~\ref{thm:algebraic_view_of_R/J_h} by the first author connects the geometric view of $H^*(\mathfrak{H}(X,h)$ given by the second author~\cite[Theorem 1.1]{Tym06} to an algebraic view as a quotient ring.

%\begin{theorem}[Mbirika \cite{Mb1}]\label{thm:algebraic_view_of_R/J_h}
%Let $h=(h_1,\ldots,h_n)$ be a Hessenberg function with corresponding ideal $J_h$.  The generators of the quotient $R/J_h$ give the Betti numbers of the regular nilpotent Hessenberg varieties.
%\end{theorem}

%%%%%%%%%%%%%%%%%%%%%%%%%%%%%%%%%%%%%%%%%%%%
%%%%%%%%%%%%%%%%%%%%%%%%%%%%%%%%%%%%%%%%%%%%
%%  Ch.6 on ideal equality
%%%%%%%%%%%%%%%%%%%%%%%%%%%%%%%%%%%%%%%%%%%%
%%%%%%%%%%%%%%%%%%%%%%%%%%%%%%%%%%%%%%%%%%%%

\section{Equality of the two families of ideals \texorpdfstring{$I_h=J_h$}{I=J}} \label{sec:Ideal_equality_section}

This section contains the main results of this paper.  Having defined $I_h$ and $J_h$ earlier, we will now prove that $I_h = J_h$.  Proposition~\ref{prop:crucial_identity} establishes a remarkable relationship between truncated elementary symmetric functions of degree $d$ in variables $x_1,\ldots,x_r$ and truncated complete symmetric functions of the same degree in variables $x_{r+1},\ldots,x_n$.  Corollary~\ref{cor:elem_and_complete_symfcn_identity} interprets Proposition~\ref{prop:crucial_identity} in terms of the ideals $I_h$ and $J_h$.  We break the proof of the main result, Theorem~\ref{thm:I_h=J_h}, into two pieces: first, that the antidiagonal ideal 
$\IAD \subseteq J_h$; and second, that $J_h \subseteq I_h$.  We then use Corollary~\ref{cor:elem_and_complete_symfcn_identity} to prove both pieces by induction.  Since $\IAD = I_h$ by Theorem~\ref{thm:min_gen_set_for_I}, this completes the proof of Theorem~\ref{thm:I_h=J_h}.

We begin with a classical identity that expresses elementary symmetric functions in terms of truncated complete symmetric functions.

\subsection{Gr\"obner basis for the set of elementary symmetric functions}\label{subsec:Grob_basis_for_elem_sym}

In this subsection we confirm that the ideals $I_h$ and $J_h$ coincide when $h$ is maximal, namely when $h=(n,\ldots,n)$, when $I_h$ is generated by the elementary symmetric functions, and when $J_h = \left\langle \tilde{e}_i(i,\ldots,n) \right\rangle_{i=1}^n$.  

In this case the equality $I_h = J_h$ together with the Gr\"{o}bner basis for $J_h$ gives a Gr\"obner basis for the set of elementary symmetric functions.  This is a much-studied problem.  Theorem~\ref{thm:Maximal_case_equality_of_I_and_J} is an identity proven in 2003 by Mora and Sala~\cite{MS} (up to change of variables).  Mora told us that their result reproves a 1994 result in Valibouze's thesis~\cite{AV}.  Valibouze in turn told us that her work reproves an 1840 result by Cauchy~\cite{AC} in the case $n=4$.

\begin{theorem}[Cauchy-Valibouze-Mora-Sala] \label{thm:Maximal_case_equality_of_I_and_J}
For $1\leq d\leq n$, the elementary symmetric function $e_d$ has the following presentation:
$$e_d = \sum_{t=1}^d (-1)^{t+1} \cdot e_{d-t} (t+1,\ldots,n) \cdot \tilde{e}_t(t,\ldots,n).$$
We conclude that if $h=(n,\ldots,n)$, then $I_h \subseteq J_h$.
\end{theorem}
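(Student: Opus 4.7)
The plan is to prove the identity by downward induction on $m$, after slightly strengthening the claim: for every $1 \leq m \leq n$ and every $1 \leq d \leq n-m+1$,
\[
e_d(m, \ldots, n) \;=\; \sum_{t=1}^d (-1)^{t+1}\, e_{d-t}(m+t, \ldots, n)\, \tilde{e}_t(m+t-1, \ldots, n).
\]
The theorem is recovered at $m = 1$, and the base case $m = n$ collapses to the trivial identity $x_n = \tilde{e}_1(n)$.

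For the inductive step I rewrite the identity as $\sum_{t=0}^d (-1)^t e_{d-t}(m+t, \ldots, n)\, \tilde{e}_t(m+t-1, \ldots, n) = 0$, where the $t=0$ summand is interpreted as $e_d(m, \ldots, n)$. Then I apply Lemma~\ref{lem:decomposing_truncated_complete_symfcn} to each $\tilde{e}_t(m+t-1, \ldots, n)$ for $t \geq 1$, and apply the obvious analogue of Lemma~\ref{lem:decomposing_truncated_elem_symfcn} that splits the \emph{smallest} variable $x_{m+t}$ off each resulting $e_{d-t}(m+t, \ldots, n)$. This produces three indexed sums. After a single index shift, two of them telescope and cancel, leaving only a lone boundary term $-x_m \cdot e_{d-1}(m+1, \ldots, n)$; one more application of Lemma~\ref{lem:decomposing_truncated_elem_symfcn} combines this with the $t=0$ term to yield $e_d(m+1, \ldots, n)$. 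What remains is precisely the full alternating sum of the identity for the variable set $\{m+1, \ldots, n\}$, which vanishes by the inductive hypothesis. The extreme case $d = n-m+1$ lies outside the inductive range at $m+1$, but in that case every $e_{d-t}(m+t+1, \ldots, n)$ vanishes for dimensional reasons and the identity holds automatically.

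The conclusion $I_h \subseteq J_h$ for $h = (n, \ldots, n)$ is then immediate. By Lemma~\ref{lemma:simple_formula_to_compute_beta} the degree tuple is $\beta = (n, n-1, \ldots, 1)$, so $\mathcal{J}_h = \{\tilde{e}_t(t, \ldots, n)\}_{t=1}^n$. The identity at $m=1$ exhibits each $e_d$ as a $\mathbb{Z}[x_1, \ldots, x_n]$-linear combination of these generators, and since $\mathfrak{C}_h = \{e_1, \ldots, e_n\}$ generates $I_h$ in this case by Remark~\ref{observe:extremal_I_cases}, we conclude $I_h \subseteq J_h$.

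The main technical obstacle will be careful bookkeeping in the telescoping step, especially the boundary contributions at $t=0$ and $t=d$, where vanishing terms such as $e_{-1} = 0$ must be tracked precisely to isolate the residual $-x_m \, e_{d-1}(m+1, \ldots, n)$. Once that accounting is pinned down, the induction runs mechanically.
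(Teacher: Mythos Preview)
Your proof is correct and takes a genuinely different route from the paper's. The paper does not prove the identity from scratch; it invokes the Mora--Sala result \cite[Proposition~2.1]{MS} (stated there for the variable ordering $x_1 > \cdots > x_n$) and then applies the permutation $x_i \leftrightarrow x_{n-i+1}$ to obtain the form in the statement. Your argument, by contrast, is self-contained: you strengthen the claim to all truncated variable sets $\{x_m,\ldots,x_n\}$, set up the alternating sum $S_m(d)$, and show $S_m(d)=S_{m+1}(d)$ via the two one-step decompositions (Lemma~\ref{lem:decomposing_truncated_complete_symfcn} on $\tilde{e}_t$ and the smallest-variable analogue of Lemma~\ref{lem:decomposing_truncated_elem_symfcn} on $e_{d-t}$), after which the cross terms telescope and the residual $-x_m\,e_{d-1}(m{+}1,\ldots,n)$ combines with the $t=0$ term exactly as you say. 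The boundary case $d=n-m+1$ is handled correctly since then $e_{d-t}(m{+}t{+}1,\ldots,n)=0$ for every $t$. What the paper's approach buys is brevity and a direct connection to the Gr\"obner-basis literature; what yours buys is independence from that reference, a proof that uses only tools already developed in Section~\ref{sec:alg_section}, and as a byproduct the more general identity for $e_d(m,\ldots,n)$, which is in the same spirit as the paper's later Proposition~\ref{prop:crucial_identity}.
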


\begin{proof}
Mora-Sala prove the following~\cite[Proposition 2.1]{MS}:
$$e_d + \sum_{t=1}^{d-1} (-1)^t \tilde{e}_t(1, \ldots,n-t+1) e_{d-t}(1,\ldots,n-t) + (-1)^d \tilde{e}_d(1, \ldots,n-d+1) = 0.$$
Also, they prove the identity holds for any term order of the form $x_{\pi(1)}<\cdots<x_{\pi(n)}$ for $\pi \in S_n$~\cite[Proposition 2.2]{MS}.  When the term order is defined by $\pi(i) = n-i+1$ for each $i=1,2,\ldots,n$, we obtain
$$e_d = \sum_{t=1}^d (-1)^{t+1} \cdot e_{d-t} (t+1,\ldots,n) \cdot \tilde{e}_t(t,\ldots,n)$$
for $1\leq d\leq n$.  When $h=(n,n,\ldots,n)$ we have the ideal $I_h=\left\langle e_1,\ldots,e_n\right\rangle$ and the ideal $J_h=\left\langle \tilde{e}_i(i,\ldots,n) \right\rangle_{i=1}^n$.  We conclude $I_h \subseteq J_h$ as desired.
\end{proof}

The next corollary uses the explicit identities in the previous theorem to conclude that in fact $I_h=J_h$ when $h=(n,\ldots,n)$.

\begin{corollary}\label{cor:I_equals_J}
%For $1 \leq d \leq n$, the truncated complete symmetric function $\tilde{e}_d(d,\ldots,n)$ has the following presentation:
%\[\tilde{e}_d(d,\ldots,n) = \sum_{j=1}^d (-1)^{j+1} \tilde{e}_{d-j} (d-j,\ldots,n) \cdot e_j.\]
If $h=(n,\ldots,n)$ then $J_h\subseteq I_h$ and so $I_h = J_h$.
\end{corollary}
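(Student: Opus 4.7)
The plan is to invert the identity from Theorem~\ref{thm:Maximal_case_equality_of_I_and_J} and use induction on degree. First I would compute the degree tuple associated with $h=(n,n,\ldots,n)$ via Lemma~\ref{lemma:simple_formula_to_compute_beta}: since every $h_k=n$, the set $\{h_k\mid h_k<i\}$ is empty for each $i\in\{1,\ldots,n\}$, so $\beta_i=i$. Consequently,
\[
J_h=\left\langle \tilde{e}_1(1,\ldots,n),\;\tilde{e}_2(2,\ldots,n),\;\ldots,\;\tilde{e}_n(n)\right\rangle,
\]
and it suffices to prove each generator $\tilde{e}_d(d,d+1,\ldots,n)$ lies in $I_h=\langle e_1,\ldots,e_n\rangle$.

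Next I would isolate the $t=d$ summand of the Cauchy--Valibouze--Mora--Sala identity from Theorem~\ref{thm:Maximal_case_equality_of_I_and_J}. Since $e_0(d+1,\ldots,n)=1$, that summand is exactly $(-1)^{d+1}\tilde{e}_d(d,\ldots,n)$, and rearranging yields
\[
\tilde{e}_d(d,\ldots,n)=(-1)^{d+1}e_d-\sum_{t=1}^{d-1}(-1)^{d+t}\,e_{d-t}(t+1,\ldots,n)\cdot\tilde{e}_t(t,\ldots,n).
\]
This expresses $\tilde{e}_d(d,\ldots,n)$ as an $R$-linear combination of $e_d$ together with products whose right-hand factors are truncated complete symmetric functions of the same shape but strictly smaller degree.

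I would then induct on $d$. The base case $d=1$ is immediate: $\tilde{e}_1(1,\ldots,n)=e_1\in I_h$. For the inductive step, assume $\tilde{e}_t(t,\ldots,n)\in I_h$ for each $t<d$. Then every term on the right-hand side of the displayed identity lies in $I_h$: the leading term is a scalar multiple of the generator $e_d$, and each summand in the sum is an element of $R$ times an inductive generator of $I_h$. Hence $\tilde{e}_d(d,\ldots,n)\in I_h$, completing the induction and showing $J_h\subseteq I_h$. Combined with the reverse containment from Theorem~\ref{thm:Maximal_case_equality_of_I_and_J}, this yields $I_h=J_h$.

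The only delicate step is bookkeeping the signs when solving for $\tilde{e}_d(d,\ldots,n)$ and confirming that the $t=d$ term of the Mora--Sala identity really does contribute a nonzero scalar multiple of the desired function (so that the inversion is legitimate); once this is checked the induction is routine, and there is no substantial obstacle to the argument.
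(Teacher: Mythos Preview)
Your proof is correct and is essentially the same argument as the paper's: the paper packages the inversion of the Mora--Sala identity as inverting a lower-triangular matrix with units on the diagonal, while you carry out that inversion explicitly by back-substitution (induction on $d$). The two presentations are equivalent, and your sign bookkeeping and base case are fine.
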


\begin{proof}
Consider the matrix $B=(b_{ij})$ whose entries consist of the coefficients in Theorem~\ref{thm:Maximal_case_equality_of_I_and_J}, namely
\begin{equation*}
b_{ij} =
\begin{cases}
(-1)^{j+1} \cdot e_{i-j} (j+1,\ldots,n) & \text{if $j \leq i$}, \\
0 & \text{if $j>i$}.
\end{cases}
\end{equation*}
Theorem~\ref{thm:Maximal_case_equality_of_I_and_J} states that 
\[B \left(\tilde{e}_1(1,\ldots,n), \tilde{e}_2(2,\ldots,n), \ldots, \tilde{e}_n(n)\right)^T = \left(e_1, e_2, \ldots, e_n \right)^T.\] 
By construction $B$ is a lower-triangular matrix with ones along the diagonal.  This means $B$ is invertible~\cite[Chapter I.6]{Mac95}.  The $i^{th}$ row of the matrix equation
\[B^{-1} \left(e_1, e_2, \ldots, e_n \right)^T = \left(\tilde{e}_1(1,\ldots,n), \tilde{e}_2(2,\ldots,n), \ldots, \tilde{e}_n(n)\right)^T\] 
expresses $\tilde{e}_i(i,\ldots,n)$ in terms of the elementary symmetric functions.  For the Hessenberg function $h=(n,n,\ldots,n)$ we have $I_h=\left\langle e_1,\ldots,e_n\right\rangle$ and $J_h=\left\langle \tilde{e}_i(i,\ldots,n) \right\rangle_{i=1}^n$. Hence $J_h \subseteq I_h$ as desired.  With Theorem~\ref{thm:Maximal_case_equality_of_I_and_J}, we conclude $J_h=I_h$.
\end{proof}

\begin{example}\label{exam:B_matrix_and_inverse}
When $n=4$, the matrices $B$ and $B^{-1}$ are:
\begin{figure}[h]
\begin{center}
\subfigure[The matrix $B$]{
$\left(\begin{array}{cccc}
1 & 0 & 0 & 0 \\
e_1(x_{234}) & -1 & 0 & 0\\
e_2(x_{234}) & -e_1(x_{34}) & 1 & 0\\
e_3(x_{234}) & -e_2(x_{34}) & e_1(x_{4}) & -1
\end{array}\right)$
}
\hspace{.5in}
\subfigure[The matrix $B^{-1}$]{
$\left(\begin{array}{cccc}
1 & 0 & 0 & 0 \\
\tilde{e}_1(x_{234}) & -1 & 0 & 0\\
\tilde{e}_2(x_{34}) & -\tilde{e}_1(x_{34}) & 1 & 0\\
\tilde{e}_3(x_{4}) & -\tilde{e}_2(x_{4}) & \tilde{e}_1(x_{4}) & -1
\end{array}\right)$.}
\end{center}
\end{figure}
%\noindent Hence, the elementary symmetric functions in terms of the $\tilde{e}_i(i,\ldots,n)$ functions are:
%%Reading off the rows of the matrix $B$ as coefficients of the $\tilde{e}_i(i,\ldots,n)$-terms, we get
%\begin{align*}
%e_1 &= 1\cdot\tilde{e}_1(1234)\\
%e_2 &= e_1(234)\cdot\tilde{e}_1(1234) - 1\cdot\tilde{e}_2(x_{234})\\
%e_3 &= e_2(234)\cdot\tilde{e}_1(1234) - e_1(34)\cdot\tilde{e}_2(234) + 1\cdot\tilde{e}_3(34)\\
%e_4 &= e_3(234)\cdot\tilde{e}_1(1234) - e_2(34)\cdot\tilde{e}_2(234) + e_1(x_4)\cdot\tilde{e}_3(34) - 1\cdot\tilde{e}_4(4).
%\end{align*}

%\noindent Furthermore, the $\tilde{e}_i(i,\ldots,n)$ functions in terms of the elementary symmetric functions are:
%%Reading off the rows of the inverse matrix $B^{-1}$ as coefficients of the $e_i$-terms, we get
%\begin{align*}
%\tilde{e}_1(1234) &= 1\cdot e_1\\
%\tilde{e}_2(234) &= \tilde{e}_1(234)\cdot e_1 - 1\cdot e_2\\
%\tilde{e}_3(34) &= \tilde{e}_2(34)\cdot e_1 - \tilde{e}_1(34)\cdot e_2 + 1\cdot e_3\\
%\tilde{e}_4(4) &= \tilde{e}_3(4)\cdot e_1 - \tilde{e}_2(4)\cdot e_2 + \tilde{e}_1(4)\cdot e_3 - 1\cdot e_4.
%\end{align*}
\end{example}

\subsection{Relating elementary and complete truncated symmetric functions}\label{subsec:I_equals_J}

The following two results are the core of our proof that $I_h=J_h$.  Together, they prove a strong relationship between truncated elementary symmetric functions and truncated complete symmetric functions.  The first lemma is the base case of the induction in Proposition~\ref{prop:crucial_identity}.

\begin{lemma}\label{lem:base_case_induction}
For any $0<r\leq n$, the following identity holds:
$$e_r(1,\ldots,r) = (-1)^r \cdot \tilde{e}_r (r+1,\ldots,n) + \sum_{t=1}^r (-1)^{t+1} \cdot e_{r-t} (t+1,\ldots,r) \cdot \tilde{e}_t(t,\ldots,n).$$
\end{lemma}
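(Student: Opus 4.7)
The plan is to prove the identity by induction on $r$, using the decomposition lemmas from Section~\ref{sec:alg_section} as the engine.

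For the base case $r = 1$, the left-hand side is $e_1(1) = x_1$ and the right-hand side is $-\tilde{e}_1(2,\ldots,n) + e_0(2,\ldots,1)\cdot\tilde{e}_1(1,\ldots,n)$, where $e_0(2,\ldots,1)$ equals $1$ (empty variable set, degree $0$). This simplifies to $-(x_2+\cdots+x_n)+(x_1+\cdots+x_n)=x_1$, matching.

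For the inductive step, assume the identity holds for $r-1$. Since $e_r(1,\ldots,r)$ has degree equal to its number of variables, Lemma~\ref{lem:decomposing_truncated_elem_symfcn} gives $e_r(1,\ldots,r) = x_r \cdot e_{r-1}(1,\ldots,r-1)$. Applying the induction hypothesis and distributing $x_r$ transforms the left-hand side into
\[
(-1)^{r-1} x_r \cdot \tilde{e}_{r-1}(r,\ldots,n) + \sum_{t=1}^{r-1} (-1)^{t+1} \cdot x_r\,e_{r-1-t}(t+1,\ldots,r-1) \cdot \tilde{e}_t(t,\ldots,n).
\]
Now I would attack the right-hand side by isolating its $t=r$ summand, which is $(-1)^{r+1}\tilde{e}_r(r,\ldots,n)$ (again because $e_0$ over an empty set is $1$), and then expanding it with Lemma~\ref{lem:decomposing_truncated_complete_symfcn}:
\[
(-1)^{r+1}\tilde{e}_r(r,\ldots,n) = (-1)^{r-1} x_r\,\tilde{e}_{r-1}(r,\ldots,n) - (-1)^{r}\tilde{e}_r(r+1,\ldots,n).
\]
Substituting this back cancels the leading term $(-1)^r\tilde{e}_r(r+1,\ldots,n)$ on the right-hand side against the new contribution, and simultaneously produces the term $(-1)^{r-1} x_r\,\tilde{e}_{r-1}(r,\ldots,n)$ needed to match the left-hand side.

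What remains is to identify the two summations for $t = 1,\ldots,r-1$. This reduces to the pointwise identity $x_r\cdot e_{r-1-t}(t+1,\ldots,r-1) = e_{r-t}(t+1,\ldots,r)$, which is immediate from Lemma~\ref{lem:decomposing_truncated_elem_symfcn} in the "full degree" case, since $e_{r-t}(t+1,\ldots,r)$ is simply the product $x_{t+1}\cdots x_r$. I expect the main bookkeeping obstacle to be the sign tracking in pulling out the $t=r$ summand and ensuring that the application of Lemma~\ref{lem:decomposing_truncated_complete_symfcn} produces exactly the cancellation and the matching term needed; once that alignment is pinned down, the remaining reindexing is routine.
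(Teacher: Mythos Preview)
Your proof is correct, but it takes a genuinely different route from the paper. The paper proves the identity by \emph{downward} induction on $r$: it takes $r=n$ as the base case, where the statement reduces to Theorem~\ref{thm:Maximal_case_equality_of_I_and_J} (the Cauchy--Valibouze--Mora--Sala identity), and then passes from $r$ to $r-1$ by dividing through by $x_r$ and using the same two decomposition lemmas you invoke. You instead induct \emph{upward} from the trivially verified case $r=1$, multiplying by $x_r$ to pass from $r-1$ to $r$. The algebraic manipulations are essentially mirror images of each other, but your version has the advantage of being self-contained: it needs only Lemmas~\ref{lem:decomposing_truncated_elem_symfcn} and~\ref{lem:decomposing_truncated_complete_symfcn}, whereas the paper's version imports Theorem~\ref{thm:Maximal_case_equality_of_I_and_J} as a black box for the base case. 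The paper's choice is presumably motivated by the structure of the subsequent Proposition~\ref{prop:crucial_identity}, whose double induction also uses $r=n$ as one of its base cases.
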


\begin{proof}
We first prove that the claim holds for the case $r=n$ and then induct (downward) on the value $r$.  Theorem~\ref{thm:Maximal_case_equality_of_I_and_J} proved that
$$e_n(1,\ldots,n) = \sum_{t=1}^n (-1)^{t+1} \cdot e_{n-t} (t+1,t+2,\ldots,n) \cdot \tilde{e}_t(t,t+1,\ldots,n).$$
The claim holds when $r=n$ because $(-1)^n \cdot \tilde{e}_n (n+1,n+2,\ldots,n)=0$ by our conventions. 

Assume that the claim holds for some $r\leq n$.  We now prove the claim for the function $e_{r-1}(1,\ldots,r-1)$.  First note that if $S \subseteq \{1,2,\ldots,n\}$, the cardinality $|S|=d$, and $i_0 \notin S$, then 
\begin{align}
e_{d}(S) &=  \frac{x_{i_0}}{x_{i_0}} \cdot \prod_{i \in S} x_i= \frac{e_{d+1}(S \cup \{i_0\})}{x_{i_0}}. \label{equ:elementary_quotient}
\end{align}
When $S=\{1,2,\ldots,r-1\}$ we obtain $e_{r-1}(1,\ldots,r-1) = \displaystyle \frac{e_r(1,\ldots,r)}{x_r}$ which equals
\begin{align*}
\displaystyle \frac{(-1)^r \cdot \tilde{e}_r (r+1,\ldots,n) + \sum_{t=1}^r (-1)^{t+1} \cdot e_{r-t} (t+1,\ldots,r) \cdot \tilde{e}_t(t,\ldots,n)}{x_r},
\end{align*}
the latter equality by the inductive hypothesis.  Rearranging the sum shows that the function $e_{r-1}(1,\ldots,r-1)$ equals
\begin{align*}
\displaystyle \frac{(-1)^{r+1} \cdot {e}_0 (r+1,\ldots,r) \cdot \tilde{e}_r(r,\ldots,n) + (-1)^r \cdot \tilde{e}_r (r+1,\ldots,n)}{x_r}\\
\hspace{1in}\displaystyle + \; \sum_{t=1}^{r-1} (-1)^{t+1} \cdot \frac{e_{r-t} (t+1,\ldots,r)}{x_r} \cdot \tilde{e}_t(t,\ldots,n).
\end{align*}
We use Equation~\eqref{equ:elementary_quotient} and the convention ${e}_0 (r+1,\ldots,r)=1$ in Definition~\ref{def:truncated_sym_fcn}:
\begin{align*}
e_{r-1}(1,\ldots,r-1) = (-1)^{r-1} \cdot \frac{\tilde{e}_r(r,\ldots,n) - \tilde{e}_r (r+1,\ldots,n)}{x_r}\hspace{.5in}\\
+ \; \sum_{t=1}^{r-1} (-1)^{t+1} \cdot e_{(r-1)-t} (t+1,\ldots,r-1) \cdot \tilde{e}_t(t,\ldots,n).
\end{align*}
Finally we apply Lemma~\ref{lem:decomposing_truncated_complete_symfcn} to the first summand of the previous equation:
\begin{align*}
e_{r-1}(1,\ldots,r-1) &= (-1)^{r-1} \cdot \tilde{e}_{r-1} (r,\ldots,n) \\
&\hspace{.25in} + \sum_{t=1}^{r-1} (-1)^{t+1} \cdot e_{(r-1)-t} (t+1,\ldots,r-1) \cdot \tilde{e}_t(t,\ldots,n).
\end{align*}
By induction, the claim is proven.
\end{proof}

The next proposition proves a similar relation, except that an arbitrary function $e_d(1,\ldots,r)$ takes the place of the function $e_r(1,\ldots,r)$.

\begin{proposition}\label{prop:crucial_identity}
For any $0 < d \leq r \leq n$, the following identity holds:
$$e_d(1,\ldots,r) = (-1)^d \cdot \tilde{e}_d (r+1,\ldots,n) + \sum_{t=1}^d (-1)^{t+1} \cdot e_{d-t} (t+1,\ldots,r) \cdot \tilde{e}_t(t,\ldots,n).$$
\end{proposition}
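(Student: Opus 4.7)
The plan is to show that the right-hand side, call it $R(d,r)$, satisfies the same recurrence in $r$ as $e_d(1,\ldots,r)$ and agrees with it on the ``diagonal'' $d = r$ (where Lemma~\ref{lem:base_case_induction} gives precisely the claimed identity). An induction on $d$ then closes the argument.

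First I would compute $R(d,r+1) - R(d,r)$. Applying Lemma~\ref{lem:decomposing_truncated_complete_symfcn} rewrites $\tilde{e}_d(r+1,\ldots,n) - \tilde{e}_d(r+2,\ldots,n)$ as $x_{r+1}\,\tilde{e}_{d-1}(r+1,\ldots,n)$, and Lemma~\ref{lem:decomposing_truncated_elem_symfcn} rewrites each increment $e_{d-t}(t+1,\ldots,r+1) - e_{d-t}(t+1,\ldots,r)$ as $x_{r+1}\,e_{d-t-1}(t+1,\ldots,r)$. After pulling out the common factor $x_{r+1}$, observing that the $t = d$ summand drops (since $e_{-1}(\cdots) = 0$), and absorbing the sign change $(-1)^d \cdot (-1) = (-1)^{d-1}$ from the complete-symmetric decomposition, the leftover expression is exactly $x_{r+1}\,R(d-1,r)$. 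This establishes
\[ R(d,r+1) \;=\; R(d,r) + x_{r+1}\,R(d-1,r), \]
which is precisely the recurrence that Lemma~\ref{lem:decomposing_truncated_elem_symfcn} gives for $e_d(1,\ldots,r+1)$ in terms of $e_d(1,\ldots,r)$ and $e_{d-1}(1,\ldots,r)$.

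With the recurrence in hand, I induct on $d$. The base case $d = 0$ is immediate: $R(0,r) = \tilde{e}_0(r+1,\ldots,n) = 1 = e_0(1,\ldots,r)$. For the inductive step I assume $R(d-1,r) = e_{d-1}(1,\ldots,r)$ for all $r$ with $d-1 \leq r \leq n$, and run a secondary induction on $r$. The inner base case $r = d$ is exactly Lemma~\ref{lem:base_case_induction}. The inner inductive step uses the outer hypothesis to replace $R(d-1,r)$ by $e_{d-1}(1,\ldots,r)$ and the inner hypothesis to replace $R(d,r)$ by $e_d(1,\ldots,r)$; the common recurrence then identifies $R(d,r+1)$ with $e_d(1,\ldots,r+1)$. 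This gives the identity for all $d \leq r \leq n$, which specializes to the claimed range $0 < d \leq r \leq n$.

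The main obstacle is the sign and indexing bookkeeping in deriving the recurrence for $R$: one must verify both that the outer $(-1)^d$ correctly becomes the outer $(-1)^{d-1}$ of $R(d-1,r)$ after extracting the sign from Lemma~\ref{lem:decomposing_truncated_complete_symfcn}, and that the vanishing of the $t = d$ term truncates the sum to exactly the $t = 1, \ldots, d-1$ range appearing in $R(d-1,r)$. A minor sub-obstacle is checking the edge case $r + 1 = n$, where Lemma~\ref{lem:decomposing_truncated_complete_symfcn} takes its degenerate form $\tilde{e}_d(n) = x_n\,\tilde{e}_{d-1}(n)$ and $\tilde{e}_d(r+2,\ldots,n)$ must be read as zero via the conventions of Definition~\ref{def:truncated_sym_fcn}. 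Once those identifications are in place, the two-layer induction is essentially bookkeeping.
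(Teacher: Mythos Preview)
Your proposal is correct and rests on the same computational core as the paper's proof: both arguments show that the right-hand side obeys the recurrence of Lemma~\ref{lem:decomposing_truncated_elem_symfcn} by applying Lemmas~\ref{lem:decomposing_truncated_elem_symfcn} and~\ref{lem:decomposing_truncated_complete_symfcn} termwise, and both anchor the induction on the diagonal via Lemma~\ref{lem:base_case_induction}. The difference is purely organizational. The paper runs its double induction on the difference $r-d$ (outer) and then downward on $r$ from $r=n$ (inner), invoking Theorem~\ref{thm:Maximal_case_equality_of_I_and_J} as the inner base case; the inductive step passes from the pair $e_d(1,\ldots,r),\,e_d(1,\ldots,r-1)$ to $e_{d-1}(1,\ldots,r-1)$ by dividing through by $x_r$. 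You instead name the right-hand side $R(d,r)$, isolate the recurrence $R(d,r+1)=R(d,r)+x_{r+1}R(d-1,r)$ once, and then induct on $d$ (outer) and upward on $r$ from $r=d$ (inner). Your packaging is a bit cleaner and avoids a separate appeal to Theorem~\ref{thm:Maximal_case_equality_of_I_and_J} for the $r=n$ base case, at the cost of needing the convention $\tilde{e}_d(\emptyset)=0$ for $d>0$ when $r=n$; otherwise the two proofs are equivalent.
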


\begin{proof}
We will prove by a double induction, inducting both on the difference $r-d$ and on the value $r$.  Lemma~\ref{lem:base_case_induction} proves the proposition when $d=r$, namely when $r-d=0$.  Assume that for some $N$ with $n \geq N \geq 0$, the proposition is true for all $d,r$ with difference $r-d \leq N$.  

Now fix $N+1$ and consider $r$ with $r-d = N+1$.  Theorem~\ref{thm:Maximal_case_equality_of_I_and_J} proves the claim when $r=n$, for arbitrary $d$.  We induct (downward) on $r$, with $r=n$ as our base case.  Assume the claim holds for fixed $r$ with $r-d = N+1$.  It suffices to show the claim holds for the function $e_{d-1}(1,\ldots,r-1)$.
%Observe that it is true for $r-1$ and $d$ since $(r-1)-d = N$.  

By Lemma~\ref{lem:decomposing_truncated_elem_symfcn}, we can rewrite the function $e_{d-1}(1,\ldots,r-1)$ as the quotient
$$\frac{e_d(1,\ldots,r) - e_d(1,\ldots,r-1)}{x_r},$$
which by the inductive hypothesis equals
\begin{align*}
\displaystyle \frac{(-1)^d \cdot \tilde{e}_d (r+1,\ldots,n) + \sum_{t=1}^d (-1)^{t+1} \cdot e_{d-t} (t+1,\ldots,r) \cdot \tilde{e}_t(t,\ldots,n)}{x_r} \hspace{.1in}\\ 
- \displaystyle \; \frac{(-1)^d \cdot \tilde{e}_d (r,\ldots,n) + \sum_{t=1}^d (-1)^{t+1} \cdot e_{d-t} (t+1,\ldots,r-1) \cdot \tilde{e}_t(t,\ldots,n)}{x_r}.
\end{align*}
Rearranging the numerators, we get $e_{d-1}(1,\ldots,r-1)$ equals
\begin{align*}
(-1)^d \cdot \frac{ \tilde{e}_d (r+1,\ldots,n) -  \tilde{e}_d (r,\ldots,n)}{x_r} \hspace{2.2in}\\
+ \; \sum_{t=1}^d (-1)^{t+1} \cdot \frac{e_{d-t} (t+1,\ldots,r) - e_{d-t} (t+1,\ldots,r-1)}{x_r} \cdot \tilde{e}_t(t,\ldots,n).
\end{align*}
Lemma~\ref{lem:decomposing_truncated_elem_symfcn} extends naturally to the case
\[ e_{(d-t)-1} (t+1,\ldots,r-1) = \frac{e_{d-t} (t+1,\ldots,r) - e_{d-t} (t+1,\ldots,r-1)}{x_r},\]
and hence we obtain
\begin{align*}
e_{d-1}(1,\ldots,r-1)  = (-1)^{d-1} \cdot \frac{\tilde{e}_d (r,\ldots,n) - \tilde{e}_d (r+1,\ldots,n)}{x_r} \hspace{.75in}\\
+ \; \sum_{t=1}^d (-1)^{t+1} \cdot e_{(d-t)-1} (t+1,\ldots,r-1) \cdot \tilde{e}_t(t,\ldots,n).
\end{align*}
Again applying Lemma~\ref{lem:decomposing_truncated_elem_symfcn}, we see
\begin{align*}
e_{d-1}(1,\ldots,r-1) = (-1)^{d-1} \cdot \tilde{e}_{d-1} (r,\ldots,n)  \hspace{1.8in} \\
+ \sum_{t=1}^d (-1)^{t+1} \cdot e_{(d-1)-t} (t+1,\ldots,r-1) \cdot \tilde{e}_t(t,\ldots,n).
\end{align*}
By induction, the claim holds for all $r$ with $r-d=N+1$, and hence for all $d,r$ with $0 < d \leq r \leq n$.
\end{proof}

We use the previous proposition for the following crucial observation.

\begin{corollary}\label{cor:elem_and_complete_symfcn_identity}
Given a Hessenberg function $h=(h_1,\ldots,h_n)$
\begin{align*}
(i) 	&\;\; e_d(1,\ldots,r)\in I_h \;\; \mbox{if and only if}\;\; \tilde{e}_d (r+1,r+2,\ldots,n)\in I_h, \mbox{and}\\
(ii) 	&\;\; \tilde{e}_d (r+1,r+2,\ldots,n)\in J_h \;\; \mbox{if and only if}\;\; e_d(1,\ldots,r)\in J_h.
\end{align*}
\end{corollary}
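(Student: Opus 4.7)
The plan is to exploit Proposition~\ref{prop:crucial_identity} as a near-tautology once one observes the auxiliary fact that $\tilde{e}_t(t,t+1,\ldots,n)$ lies in both $I_h$ and $J_h$ for every $t \in \{1,\ldots,n\}$ and every Hessenberg function $h$. This auxiliary fact is where the actual work (such as it is) happens; once it is in hand, the corollary reads off directly from the identity.

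First I would establish the auxiliary fact for $J_h$. Since Definition~\ref{def:deg_tuples}(a') forces $\beta_t \leq t$, Theorem~\ref{thm:J_ideal_containment}(1) applied with $d=t$ and $i=t$ places $\tilde{e}_t(t,t+1,\ldots,n)$ in $J_h$. For $I_h$, I would instead use the poset: the Hessenberg function $h_{\max}=(n,n,\ldots,n)$ is the maximum of the Hasse diagram, so Theorem~\ref{thm:I_ideal_containment} gives $I_{h_{\max}} \subseteq I_h$. Corollary~\ref{cor:I_equals_J} identifies $I_{h_{\max}}$ with $J_{h_{\max}} = \langle \tilde{e}_i(i,\ldots,n)\rangle_{i=1}^n$, whose generators are precisely the functions $\tilde{e}_t(t,\ldots,n)$ for $1 \leq t \leq n$. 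Therefore each of these truncated complete symmetric functions sits inside $I_h$ as well.

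Second, I would rearrange Proposition~\ref{prop:crucial_identity} as
\[
e_d(1,\ldots,r) - (-1)^d \tilde{e}_d(r+1,\ldots,n) \;=\; \sum_{t=1}^d (-1)^{t+1}\, e_{d-t}(t+1,\ldots,r)\cdot \tilde{e}_t(t,\ldots,n).
\]
The right-hand side is a $\mathbb{Z}[x_1,\ldots,x_n]$-linear combination of the functions $\tilde{e}_t(t,\ldots,n)$, and so by the auxiliary fact it lies in $I_h$ (when proving (i)) and in $J_h$ (when proving (ii)). Consequently
\[
e_d(1,\ldots,r) \;\equiv\; (-1)^d\, \tilde{e}_d(r+1,\ldots,n) \pmod{I_h},
\]
and the analogous congruence holds mod $J_h$. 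Since $(-1)^d$ is a unit in $\mathbb{Z}$, membership of one of these two polynomials in $I_h$ (respectively $J_h$) is equivalent to membership of the other, proving both biconditionals.

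The only step with any substance is the auxiliary fact that every $\tilde{e}_t(t,\ldots,n)$ lies in every $I_h$ and $J_h$; the $J_h$ side is essentially a pigeonhole observation about $\beta_t \leq t$, while the $I_h$ side relies crucially on the Cauchy--Valibouze--Mora--Sala identity packaged in Corollary~\ref{cor:I_equals_J}. Once that identity supplies the translation between $\{e_1,\ldots,e_n\}$ and $\{\tilde e_i(i,\ldots,n)\}$ at the top of the poset, the monotonicity from Theorem~\ref{thm:I_ideal_containment} propagates the needed containment to all $I_h$, and Proposition~\ref{prop:crucial_identity} handles everything else.
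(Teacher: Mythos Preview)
Your proof is correct and follows essentially the same route as the paper's: both arguments first show that each $\tilde{e}_t(t,\ldots,n)$ lies in $I_h$ (via $J_{(n,\ldots,n)}=I_{(n,\ldots,n)}\subseteq I_h$ using Corollary~\ref{cor:I_equals_J} and Theorem~\ref{thm:I_ideal_containment}) and in $J_h$ (via Theorem~\ref{thm:J_ideal_containment}), and then read off the biconditionals from Proposition~\ref{prop:crucial_identity} by noting that the right-hand side lies in $I_h\cap J_h$. One tiny quibble: Theorem~\ref{thm:J_ideal_containment}(1) as stated requires the strict inequality $d>\beta'_i$, so when $\beta_t=t$ you should instead note that $\tilde{e}_t(t,\ldots,n)=\tilde{e}_{\beta_t}(t,\ldots,n)$ is already a generator of $J_h$; this is a trivial patch.
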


\begin{proof}
Each function $\tilde{e}_t(t,t+1,\ldots,n)$ from Proposition~\ref{prop:crucial_identity} lies in $J_{(n,n,\ldots,n)}$ since the maximal degree tuple $(n,n-1,n-2,\ldots,1)$ corresponds to the Hessenberg function $h=(n,n,\ldots,n)$.  Hence $\tilde{e}_t(t,t+1,\ldots,n)$ is in $I_{(n,n,\ldots,n)}$ by Corollary~\ref{cor:I_equals_J}.  Theorem~\ref{thm:I_ideal_containment} says that $I_h \supseteq I_{(n,n,\ldots,n)}$ for all $h$, so $\tilde{e}_t(t,t+1,\ldots,n) \in I_h$ for all $h$.  Moreover, Theorem~\ref{thm:J_ideal_containment} proves that the function $\tilde{e}_t(t,t+1,\ldots,n)$ lies in $J_h$ for all $h$. Thus Proposition~\ref{prop:crucial_identity} says 
\[e_d(1,\ldots,r) + (-1)^{d+1} \cdot \tilde{e}_d (r+1,r+2,\ldots,n) = f\]
where $f \in I_h \cap J_h$. The claim follows.
\end{proof}

\subsection{Proving \texorpdfstring{$I_h=J_h$}{I=J}}\label{I=J}
We now complete the proof that $I_h=J_h$ for each Hessenberg function $h$.  Our proof proceeds in two steps.  First, we show that the antidiagonal generators of $I_h$ are contained in $J_h$, from which we conclude that $I_h \subseteq J_h$.  Second, we show that the generators of $J_h$ are contained in $I_h$.  The proofs are very similar, but reverse the roles of elementary and complete truncated symmetric functions.  

The next claim relies on the identity in Lemma~\ref{lem:complete_symm_func_lemma}.

\begin{corollary}[$I_h \subseteq J_h$]\label{cor:I_in_J}
Let $h=(h_1,\ldots,h_n)$ be a Hessenberg function.  Fix $1 \leq j \leq n$, and let $\alpha_j = h_j-j+1$.  For each $j$, the generating function $e_{\alpha_j} (1,2,\ldots,h_j) \in \IAD$ lies in $J_h$.  We conclude that $I_h \subseteq J_h$.
\end{corollary}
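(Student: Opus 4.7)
The plan is to reduce the question to showing each antidiagonal generator of $I_h$ lies in $J_h$, then use Corollary~\ref{cor:elem_and_complete_symfcn_identity}(ii) to flip the question from truncated elementary to truncated complete symmetric functions, and finally apply Theorem~\ref{thm:J_ideal_containment}(1) to recognize the result as a member of $J_h$.

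First, by Theorem~\ref{thm:min_gen_set_for_I} we have $I_h = I_h^{AD}$, so it is enough to check the generators $e_{\alpha_j}(1,2,\ldots,h_j)$ for $j=1,\ldots,n$. By Corollary~\ref{cor:elem_and_complete_symfcn_identity}(ii), the membership $e_{\alpha_j}(1,\ldots,h_j) \in J_h$ is equivalent to $\tilde{e}_{\alpha_j}(h_j+1,h_j+2,\ldots,n) \in J_h$, so the task becomes verifying that each such truncated complete symmetric function lies in $J_h$.

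To prove this, invoke the first assertion of Theorem~\ref{thm:J_ideal_containment}: writing $\beta=(\beta_n,\ldots,\beta_1)$ for the degree tuple of $h$, the function $\tilde{e}_d(i,i+1,\ldots,n)$ belongs to $J_h$ whenever $d \geq \beta_i$. With $i = h_j + 1$ and $d = \alpha_j = h_j - j + 1$, it suffices to show $\alpha_j \geq \beta_{h_j+1}$. Applying Lemma~\ref{lemma:simple_formula_to_compute_beta}, we compute
\[\beta_{h_j+1} \;=\; (h_j+1) - \#\{k : h_k < h_j+1\} \;=\; (h_j+1) - \#\{k : h_k \leq h_j\}.\]
Because $h$ is nondecreasing, the indices $1,2,\ldots,j$ all satisfy $h_k \leq h_j$, so $\#\{k : h_k \leq h_j\} \geq j$, giving $\beta_{h_j+1} \leq h_j + 1 - j = \alpha_j$, as required.

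The only edge case is $j$ with $h_j = n$, where the set $\{h_j+1,\ldots,n\}$ is empty. Here $\alpha_j = n-j+1 \geq 1$, so by the conventions in Definition~\ref{def:truncated_sym_fcn} the function $\tilde{e}_{\alpha_j}(\emptyset) = 0$, which is automatically in $J_h$; applying Corollary~\ref{cor:elem_and_complete_symfcn_identity}(ii) again recovers $e_{\alpha_j}(1,\ldots,n) \in J_h$. There is no serious obstacle in this argument — the conceptual heavy lifting happened in Proposition~\ref{prop:crucial_identity} and its corollary. The only care needed is bookkeeping around the two indexing conventions (degree tuples are written with descending subscripts, but the position-$j$ antidiagonal generator corresponds to checking $\beta_{h_j+1}$) and the vacuous case when the variable set is empty.
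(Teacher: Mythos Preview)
Your proof is correct and follows essentially the same strategy as the paper: reduce to antidiagonal generators via Theorem~\ref{thm:min_gen_set_for_I}, establish the inequality $\alpha_j \geq \beta_{h_j+1}$, invoke Theorem~\ref{thm:J_ideal_containment} to get $\tilde{e}_{\alpha_j}(h_j+1,\ldots,n)\in J_h$, and then apply Corollary~\ref{cor:elem_and_complete_symfcn_identity}(ii). The only cosmetic difference is that the paper reads off the inequality $\beta_{h_j+1}\le \alpha_j$ from the Hessenberg diagram, whereas you obtain it directly from the formula of Lemma~\ref{lemma:simple_formula_to_compute_beta}; you also treat the boundary case $h_j=n$ explicitly, which the paper leaves implicit.
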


\begin{proof}
Let $\beta=(\beta_n,\beta_{n-1},\ldots,\beta_1)$ be the degree tuple for $h$.  Choose any $j$ and let $r=h_j$.  We first prove that $\alpha_j \geq \beta_{r+1}$.  Consider the parts of the Hessenberg diagram of $h$ in Figure~\ref{fig:Hess_diag_for_JT_Corol} (and defined in Definition~\ref{def:Hess_diagram}).  Entry $(r+1,j)$ is not shaded because $r=h_j$.  This means $\beta_{r+1} < L$ where $L = r-j+2$ as indicated in Figure~\ref{fig:Hess_diag_for_JT_Corol}.  But
\[L = r-j+2 = h_j-j +2= \alpha_j +1\]
since $\alpha_j = h_j-j+1$.  It follows that $\beta_{r+1} \leq \alpha_j$.
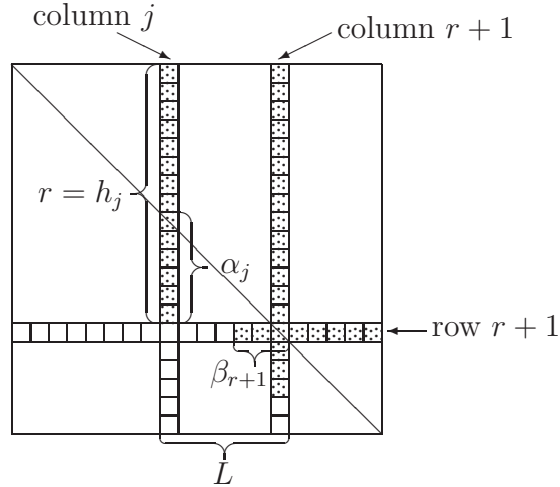
\begin{figure}[h]
\begin{center}
\begin{picture}(150,190)(0,-25)
\put(0,0){\line(1,0){140}}
\put(0,140){\line(1,0){140}}
\put(0,0){\line(0,1){140}}
\put(140,0){\line(0,1){140}}
\put(0,140){\line(1,-1){140}}

\put(0,35){\line(1,0){140}}
\put(0,42){\line(1,0){140}}
\multiput(0,35)(7,0){20}{\line(0,1){7}}
\put(157,39){\vector(-1,0){15}}
\put(159,37){row $r+1$}
\multiput(87,40)(4,0){3}{\circle*{1}}
\multiput(85,39)(4,0){4}{\circle*{1}}
\multiput(87,37)(4,0){3}{\circle*{1}}
\multiput(85,36)(4,0){4}{\circle*{1}}
\multiput(106,40)(4,0){9}{\circle*{1}}
\multiput(108,39)(4,0){9}{\circle*{1}}
\multiput(106,37)(4,0){9}{\circle*{1}}
\multiput(108,36)(4,0){9}{\circle*{1}}

\put(105,0){\line(0,1){140}}
\put(98,0){\line(0,1){140}}
\multiput(98,0)(0,7){20}{\line(1,0){7}}
\put(121,152){\vector(-2,-1){20}}
\put(123,150){column $r+1$}
\multiput(99,138)(0,-3){42}{\circle*{1}}
\multiput(101,139)(0,-3){42}{\circle*{1}}
\multiput(103,138)(0,-3){42}{\circle*{1}}

\multiput(63,0)(-7,0){2}{\line(0,1){140}}
\multiput(56,0)(0,7){20}{\line(1,0){7}}
\multiput(57,138)(0,-3){33}{\circle*{1}}
\multiput(59,139)(0,-3){33}{\circle*{1}}
\multiput(61,138)(0,-3){33}{\circle*{1}}
\put(40,152){\vector(2,-1){20}}
\put(8,155){column $j$}

\put(63,66){\oval(10,36)[tr]}
\put(63,60){\oval(10,36)[br]}
\put(74,67){\oval(12,8)[bl]}
\put(74,59){\oval(12,8)[tl]}
\put(79,61){$\alpha_j$}

\put(56,94){\oval(10,92)[tl]}
\put(56,88){\oval(10,92)[bl]}
\put(45,95){\oval(12,8)[br]}
\put(45,87){\oval(12,8)[tr]}
\put(10,88){$r=h_j$}

\put(89.5,35){\oval(11,7)[bl]}
\put(94.5,35){\oval(21,7)[br]}
\put(88,25.5){\oval(8,12)[tr]}
\put(96,25.5){\oval(8,12)[tl]}
\put(75,20){$\beta_{r+1}$}

\put(77,0){\oval(42,7)[bl]}
\put(84,0){\oval(42,7)[br]}
\put(76.5,-9.5){\oval(8,12)[tr]}
\put(84.5,-9.5){\oval(8,12)[tl]}
\put(76,-19){$L$}
\end{picture}
\end{center}
\caption{\label{fig:Hess_diag_for_JT_Corol}Hessenberg diagram schematic for Corollary~\ref{cor:I_in_J}.}
\end{figure}

We know that $\tilde{e}_{\beta_{r+1}}(r+1,r+2,\ldots,n) \in J_h$ by definition.  Theorem~\ref{thm:J_ideal_containment} showed that if $\alpha_j \geq \beta_{r+1}$ then $\tilde{e}_{\alpha_j}(r+1,r+2,\ldots,n) \in J_h$.  By definition $r=h_j$ so the function $\tilde{e}_{\alpha_j}(h_j+1,h_j+2,\ldots,n) \in J_h$.
The generator $e_{\alpha_j} (1,2,\ldots,h_j) \in \IAD$ also lies in $J_h$ by Corollary~\ref{cor:elem_and_complete_symfcn_identity}.(ii).  We conclude that $\IAD \subseteq J_h$ and hence  $I_h \subseteq J_h$ by Theorem~\ref{thm:min_gen_set_for_I}.
\end{proof}

We show that $J_h \subseteq I_h$ by a similar argument to the previous corollary.

\begin{corollary}[$J_h \subseteq I_h$]\label{cor:J_in_I}
Let $h=(h_1,\ldots,h_n)$ be a Hessenberg function.  For each $j$ with $1 \leq j \leq n$, the generator $\tilde{e}_{\beta_j}(j,j+1,\ldots,n)$ of $J_h$ lies in $I_h$.  We conclude that $J_h \subseteq I_h$.
\end{corollary}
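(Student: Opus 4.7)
The proof will mirror Corollary~\ref{cor:I_in_J}, swapping the roles of truncated elementary and truncated complete symmetric functions. Fix $j \in \{1,\ldots,n\}$ and set $k = \#\{l : h_l < j\}$. By monotonicity of $h$, this means $h_1 \leq \cdots \leq h_k < j \leq h_{k+1} \leq \cdots \leq h_n$, and Lemma~\ref{lemma:simple_formula_to_compute_beta} gives $\beta_j = j - k$.

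First handle the edge case $k = 0$: here $\beta_j = j$ is the maximal value permitted by Definition~\ref{def:deg_tuples}, so $\tilde{e}_j(j,\ldots,n)$ is a generator of $J_{(n,\ldots,n)} = I_{(n,\ldots,n)}$ (by Corollary~\ref{cor:I_equals_J}), which is contained in $I_h$ by Theorem~\ref{thm:I_ideal_containment}. For the main case $k \geq 1$, Corollary~\ref{cor:elem_and_complete_symfcn_identity}.(i) reduces the claim to showing $e_{\beta_j}(1,\ldots,j-1) \in I_h$. Since $h_k \leq j-1$, the variable set $\{1,\ldots,h_k\}$ of column $k$ of the $h$-Ferrers diagram sits inside $\{1,\ldots,j-1\}$. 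Apply Lemma~\ref{lem:cool_combinatorial_sym_func_lemma} with $r = j-1$, $d = \beta_j$, and split-off size $(j-1) - h_k$ to write
\[e_{\beta_j}(1,\ldots,j-1) = \sum_{t=0}^{(j-1)-h_k} e_t(h_k+1,\ldots,j-1) \cdot e_{\beta_j - t}(1,\ldots,h_k).\]

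The main obstacle is verifying that every nonzero summand $e_{\beta_j - t}(1,\ldots,h_k)$ on the right is actually a generator of $\mathfrak{C}_h$ from column $k$, which contains precisely the functions of degree $h_k-(k-1), h_k-(k-2), \ldots, h_k$. Nonzeroness requires $\beta_j - t \leq h_k$, hence $t \geq \beta_j - h_k$; combined with the summation bound $t \leq (j-1) - h_k = \beta_j - h_k + (k-1)$ and the identity $\beta_j = j - k$, one checks that $\beta_j - t$ lies in $[h_k - k + 1, h_k]$---exactly the available degrees. Therefore each nonzero summand lies in $\mathfrak{C}_h \subseteq I_h$, giving $e_{\beta_j}(1,\ldots,j-1) \in I_h$ and hence $\tilde{e}_{\beta_j}(j,\ldots,n) \in I_h$ via Corollary~\ref{cor:elem_and_complete_symfcn_identity}.(i). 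Since $j$ was arbitrary, every generator of $\mathcal{J}_h$ lies in $I_h$, so $J_h \subseteq I_h$.
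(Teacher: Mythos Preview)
Your proof is correct and follows essentially the same strategy as the paper's: reduce via Corollary~\ref{cor:elem_and_complete_symfcn_identity}.(i) to showing $e_{\beta_j}(1,\ldots,j-1)\in I_h$, then use Lemma~\ref{lem:cool_combinatorial_sym_func_lemma} to decompose this in terms of the column-$k$ generators of $\mathfrak{C}_h$, where $k=j-\beta_j$. The paper separates out the sub-case $h_k=j-1$ (your split-off size $0$) as a standalone ``Case~1,'' whereas you absorb it into the general decomposition; otherwise the arguments and the degree-range verification coincide.
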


\begin{proof}
Let $\beta=(\beta_n,\beta_{n-1},\ldots,\beta_1)$ be the degree tuple corresponding to $h$.  Fix $j$ and consider the generator $\tilde{e}_{\beta_j}(j,j+1,\ldots,n) \in J_h$.   By construction $\beta_j \leq j$.

If $\beta_j = j$ then $\tilde{e}_{\beta_j}(j,j+1,\ldots,n)=\tilde{e}_j(j,j+1,\ldots,n)$, which lies in $J_{(n,n,\ldots,n)}$.  Theorem~\ref{thm:Maximal_case_equality_of_I_and_J} proved that $J_{(n,\ldots,n)}=I_{(n,\ldots,n)}$. Theorem~\ref{thm:I_ideal_containment} proved $I_{(n,\ldots,n)} \subseteq I_h$ for all $h$. We conclude that if $\beta_j = j$ then $\tilde{e}_{\beta_j}(j,j+1,\ldots,n) \in I_h$.

Now assume that $\beta_j < j$.  By Corollary~\ref{cor:elem_and_complete_symfcn_identity}.(i), it suffices to show that  the function $e_{\beta_j}(1,\ldots,j-1)$ lies in $I_h$.  Recall that the set 
\[\mathfrak{C}_h = \{e_{h_i-t}(1,\ldots,h_i)\; | \; 0 \leq t \leq i-1\}_{i=1}^n\] 
generates $I_h$.  If $e_{\beta_j}(1,\ldots,j-1)$ is already in $\mathfrak{C}_h$ then the claim is trivially true.  If not, choose the largest $i$ such that $h_i \leq j-1$.    

We claim that $i=j-\beta_j$. Consider the parts of the Hessenberg diagram for $h$ in Figure~\ref{fig:Hess_diag_schematic} (and defined in Definition~\ref{def:Hess_diagram}).
\begin{figure}[h]
\begin{center}
\begin{picture}(150,190)(0,-25)
\put(0,0){\line(1,0){140}}
\put(0,140){\line(1,0){140}}
\put(0,0){\line(0,1){140}}
\put(140,0){\line(0,1){140}}
\put(0,140){\line(1,-1){140}}

\put(0,35){\line(1,0){140}}
\put(0,42){\line(1,0){140}}
\multiput(0,35)(7,0){20}{\line(0,1){7}}
\put(157,39){\vector(-1,0){15}}
\put(159,37){row $j$}
\multiput(63,40)(4,0){9}{\circle*{1}}
\multiput(65,39)(4,0){9}{\circle*{1}}
\multiput(63,37)(4,0){9}{\circle*{1}}
\multiput(65,36)(4,0){9}{\circle*{1}}
\multiput(106,40)(4,0){9}{\circle*{1}}
\multiput(108,39)(4,0){9}{\circle*{1}}
\multiput(106,37)(4,0){9}{\circle*{1}}
\multiput(108,36)(4,0){9}{\circle*{1}}

\put(105,0){\line(0,1){140}}
\put(98,0){\line(0,1){140}}
\multiput(98,0)(0,7){20}{\line(1,0){7}}
\put(121,152){\vector(-2,-1){20}}
\put(123,150){column $j$}
\multiput(99,138)(0,-3){42}{\circle*{1}}
\multiput(101,139)(0,-3){42}{\circle*{1}}
\multiput(103,138)(0,-3){42}{\circle*{1}}

\multiput(63,0)(-7,0){3}{\line(0,1){140}}
\multiput(49,0)(0,7){20}{\line(1,0){14}}
\multiput(51,139)(0,-3){28}{\circle*{1}}
\multiput(53,138)(0,-3){28}{\circle*{1}}
\multiput(55,139)(0,-3){28}{\circle*{1}}
\multiput(57,138)(0,-3){39}{\circle*{1}}
\multiput(59,139)(0,-3){40}{\circle*{1}}
\multiput(61,138)(0,-3){39}{\circle*{1}}
\put(33,152){\vector(2,-1){20}}
\put(-40,150){column $j-\beta_j$}
\put(59,157){\vector(0,-1){15}}
\put(38,160){column $j-\beta_j+1$}

\put(49,51){\oval(7,10)[tl]}
\put(49,48){\oval(7,10)[bl]}
\put(41.5,52){\oval(8,4)[br]}
\put(41.5,48){\oval(8,4)[tr]}
\put(33,48){$p$}

\put(49,116){\oval(7,48)[tl]}
\put(49,108){\oval(7,48)[bl]}
\put(39.5,116){\oval(12,8)[br]}
\put(39.5,108){\oval(12,8)[tr]}
\put(5,110){$j-\beta_j$}

\put(24,0){\oval(48,7)[bl]}
\put(32,0){\oval(48,7)[br]}
\put(24,-9.5){\oval(8,12)[tr]}
\put(32,-9.5){\oval(8,12)[tl]}
\put(12,-21){$j-\beta_j$}

\put(77,0){\oval(42,7)[bl]}
\put(84,0){\oval(42,7)[br]}
\put(76.5,-9.5){\oval(8,12)[tr]}
\put(84.5,-9.5){\oval(8,12)[tl]}
\put(76,-21){$\beta_j$}
\end{picture}
\end{center}
\caption{\label{fig:Hess_diag_schematic}Hessenberg diagram schematic for Corollary~\ref{cor:J_in_I}.}
\end{figure}
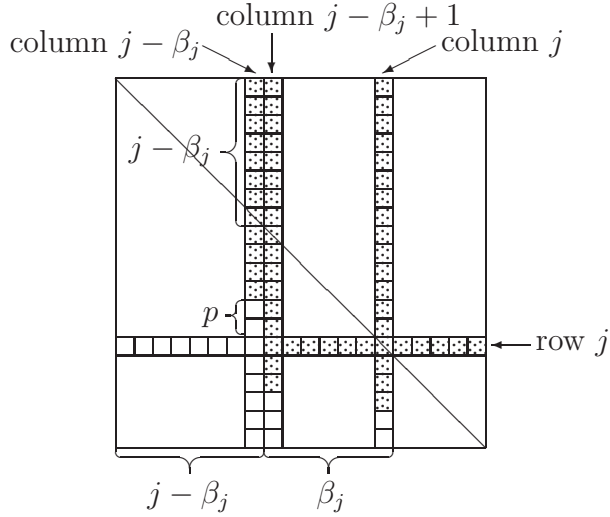
Since there are $\beta_j$ shaded boxes on and left of the diagonal in row $j$, we know both that $h_k \geq j$ for all $k \geq j-\beta_j+1$ and that column $j-\beta_j$ must have strictly less than $j$ shaded boxes.  Thus $i=j-\beta_j$.  

We now consider two cases.

\noindent\textbf{Case 1:} $h_i = j-1$

Since $\beta_j = j-i$,  the function $e_{\beta_j}(1,\ldots,j-1) = e_{j-i}(1,\ldots,j-1)$, which is one of the generators $\{e_{(j-1)-t}(1,\ldots,j-1)\}_{t=0}^{i-1} \subseteq \mathfrak{C}_h$ from the $i^{th}$ column of the $h$-Ferrers diagram.

\noindent\textbf{Case 2:} $h_i = (j-1)-p$ for some $p>0$

Since $h$ is a Hessenberg function we know $h_i \geq i$ and so $p \leq (j-1)-i$.  (This is also evident from Figure~\ref{fig:Hess_diag_schematic}.)  Thus we can apply Lemma~\ref{lem:cool_combinatorial_sym_func_lemma}:
\begin{align}e_{\beta_j}(1,\ldots,j-1) = \sum_{t=0}^p e_t(h_i+1,h_i+2,\ldots,j-1)\cdot e_{\beta_j-t}(1,\ldots,h_i).\label{equ:cool_comb_lemma_applied}
\end{align}

We wish to show that as $t$ varies, the degrees $\beta_j-t$ in Equation~\eqref{equ:cool_comb_lemma_applied} appear in the $i^{th}$ column of the $h$-Ferrers diagram for each nonvanishing term.  That is, we need to show that 
$$((j-1)-p)-(i-1) \leq \beta_j-t \leq (j-1)-p$$
for each nonvanishing term.  Since both $\beta_j=j-i$ and $-p \leq -t$ we can conclude that $j-i-p \leq \beta_j - t$.  To establish the second inequality, recall that $e_{\beta_j-t}(1,\ldots,h_i)$ vanishes if $\beta_j-t > h_i$.  By definition $h_i=j-1-p$ so it suffices to consider $t$ for which $\beta_j-t \leq j-1-p$, as desired.

Hence each $e_{\beta_j-t}(1,\ldots,h_i) \in I_h$ and so $e_{\beta_j}(1,\ldots,j-1) \in I_h$.  By Corollary~\ref{cor:elem_and_complete_symfcn_identity}, the original generator $\tilde{e}_{\beta_j}(j,j+1,\ldots,n) \in J_h$ lies in $I_h$ as desired.
\end{proof}

We summarize the previous two results in the following theorem.

\begin{theorem}\label{thm:I_h=J_h}
Let $h=(h_1,\ldots,h_n)$ be a Hessenberg function.  Then $I_h = J_h$.
\end{theorem}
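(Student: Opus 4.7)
The plan is to obtain Theorem~\ref{thm:I_h=J_h} as an immediate consequence of the two preceding corollaries, which together give both containments. Specifically, Corollary~\ref{cor:I_in_J} establishes $I_h \subseteq J_h$ and Corollary~\ref{cor:J_in_I} establishes $J_h \subseteq I_h$, so I would simply cite both and conclude equality. No further work is required at this step; the real content has already been distributed among the preparatory results.

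Tracing the dependency backwards to describe what the proof really does: the keystone is Proposition~\ref{prop:crucial_identity}, which expresses $e_d(1,\ldots,r) + (-1)^{d+1}\tilde{e}_d(r+1,\ldots,n)$ as a $\mathbb{Z}[x_1,\ldots,x_n]$-linear combination of the ``diagonal'' complete symmetric functions $\tilde{e}_t(t,t+1,\ldots,n)$. Corollary~\ref{cor:elem_and_complete_symfcn_identity} then upgrades this to the crucial biconditional that $e_d(1,\ldots,r)$ lies in $I_h$ (resp.\ $J_h$) if and only if $\tilde{e}_d(r+1,\ldots,n)$ does, because each $\tilde{e}_t(t,\ldots,n)$ belongs to both $I_h$ and $J_h$ (using $I_{(n,\ldots,n)} = J_{(n,\ldots,n)}$ from Corollary~\ref{cor:I_equals_J} combined with the containment posets in Theorems~\ref{thm:I_ideal_containment} and \ref{thm:J_ideal_containment}).

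With that biconditional in hand, the direction $I_h \subseteq J_h$ is carried out in Corollary~\ref{cor:I_in_J} by using the reduced antidiagonal generating set from Theorem~\ref{thm:min_gen_set_for_I}: for each antidiagonal generator $e_{\alpha_j}(1,\ldots,h_j)$, one reads off from the Hessenberg diagram that $\alpha_j \geq \beta_{h_j+1}$, so $\tilde{e}_{\alpha_j}(h_j+1,\ldots,n) \in J_h$ by Theorem~\ref{thm:J_ideal_containment}.(1), and then Corollary~\ref{cor:elem_and_complete_symfcn_identity}.(ii) transports this back to $I_h$. Conversely, for $J_h \subseteq I_h$, Corollary~\ref{cor:J_in_I} handles each generator $\tilde{e}_{\beta_j}(j,\ldots,n)$ by splitting on whether $\beta_j = j$ (which reduces to the known maximal case) or $\beta_j < j$ (which reduces via Corollary~\ref{cor:elem_and_complete_symfcn_identity}.(i) to showing $e_{\beta_j}(1,\ldots,j-1) \in I_h$, and then uses Lemma~\ref{lem:cool_combinatorial_sym_func_lemma} to decompose this function into products of generators indexed by the appropriate column of the $h$-Ferrers diagram).

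So the proof I would write is essentially one sentence: by Corollary~\ref{cor:I_in_J} we have $I_h \subseteq J_h$, and by Corollary~\ref{cor:J_in_I} we have $J_h \subseteq I_h$, hence $I_h = J_h$. The main obstacle is conceptual rather than technical at this point: all the difficulty has been absorbed into proving Proposition~\ref{prop:crucial_identity} (double induction on $r-d$ and on $r$, using the two decomposition lemmas and the base case from the Cauchy--Valibouze--Mora--Sala identity) and into the Hessenberg-diagram bookkeeping that identifies exactly which antidiagonal degrees $\alpha_j$ exceed the appropriate $\beta_{r+1}$. Given those, the final theorem is immediate.
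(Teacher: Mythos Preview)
Your proposal is correct and matches the paper's approach exactly: the paper presents Theorem~\ref{thm:I_h=J_h} simply as a summary of Corollaries~\ref{cor:I_in_J} and~\ref{cor:J_in_I}, with no additional argument. Your dependency trace through Proposition~\ref{prop:crucial_identity}, Corollary~\ref{cor:elem_and_complete_symfcn_identity}, and the Hessenberg-diagram bookkeeping is also an accurate account of how the paper structures the real work.
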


%%%%%%%%%%%%%%%%%%%%%%%%%%%%%%%%%%%%%%%%%%%%
%%%%%%%%%%%%%%%%%%%%%%%%%%%%%%%%%%%%%%%%%%%%
%%  Section 7: Open questions
%%%%%%%%%%%%%%%%%%%%%%%%%%%%%%%%%%%%%%%%%%%%
%%%%%%%%%%%%%%%%%%%%%%%%%%%%%%%%%%%%%%%%%%%%

\section{Open questions}\label{section:questions}

\numberwithin{theorem}{section}

We close with some open questions.

The ideals $I_h$ extend the Tanisaki ideal for the regular nilpotent Springer variety to regular nilpotent Hessenberg varieties.  Can we do this for arbitrary nilpotent Hessenberg varieties?

\begin{question}
Can we simultaneously generalize the Tanisaki ideal $\mathcal{I}_{\lambda}$ and the ideals $I_h$ to a two-parameter family $I_{\lambda,h}$, whose quotient $\mathbb{Q}[x_1,\ldots,x_n]/I_{\lambda,h}$ is the cohomology of the Hessenberg variety for $\lambda$ and $h$?
\end{question}

Together, this paper and~\cite{Mb1} show that there is a module isomorphism between the quotients $R/I_h$ and the cohomology of regular nilpotent Hessenberg varieties.  Does the quotient $R/I_h$ actually describe the ring structure of the cohomology, as it does for Springer varieties?

\begin{question}
Is there a ring isomorphism between $\mathbb{Q}[x_1,\ldots,x_n]/I_h$ and the cohomology of the regular nilpotent Hessenberg varieties (with rational coefficients)?
\end{question}

If the answer to the previous question were yes, it would mean that the cohomology of the regular nilpotent Hessenberg varieties is ring isomorphic to a family of smooth Schubert varieties, as described in the Introduction \cite{Din}, \cite{GasRei}.

The combinatorics of the truncated symmetric functions are interesting in their own right.  This raises several interesting questions, including:

\begin{question}
What are the matrices expressing the elementary truncated symmetric polynomials that generate $I_h$ in terms of the complete truncated symmetric polynomials that generate $J_h$, and vice versa (see Section~\ref{subsec:Grob_basis_for_elem_sym})?  Do they have properties similar to the change-of-basis matrices for traditional symmetric polynomials, as in Macdonald~\cite[Chapter I.6]{Mac95}?
\end{question}

\begin{question}
Biagioli, Faridi, and Rosas recently showed that their construction produces a minimal generating set for $\mathcal{I}_{\lambda}$ if the partition $\lambda$ is a hook~\cite{Biag07}.  In Section~\ref{sec:min_gen_set}, we construct a reduced generating set for $I_h$ that Galetto proved is minimal (Appendix~\ref{app:minimal_gen_proof}).  Do similar methods produce a minimal generating set for the Tanisaki ideal $\mathcal{I}_{\lambda}$ for arbitrary $\lambda$?
\end{question}

%\begin{question}
%Recent work of MacPherson and the second author constructs the monodromy representation of $S_n$ on the cohomology of nilpotent Hessenberg varieties~\cite{MacTym}.  Can we construct this action on the ring $\mathbb{C}[x_1,\ldots,x_n]/I_h$?
%\end{question}

\newpage
\begin{appendix}
\setcounter{figure}{0}
\numberwithin{figure}{section}
%%%%%%%%%%%%%%%%%%%%%%%%%%%%%%%%%%%%%%%%%%%%
%%%%%%%%%%%%%%%%%%%%%%%%%%%%%%%%%%%%%%%%%%%%
%%  Appendix
%%%%%%%%%%%%%%%%%%%%%%%%%%%%%%%%%%%%%%%%%%%%
%%%%%%%%%%%%%%%%%%%%%%%%%%%%%%%%%%%%%%%%%%%%

\section{Example of generator-containment sequences for \texorpdfstring{$n=5$}{n=5}}\label{app:Two figures}

In Figure~\ref{fig:Hess_Fcn_Hasse_n=5}, we give the Hasse diagram on Hessenberg functions for $n=5$.  As in the $n=4$ diagram from Figure~\ref{fig:h_beta_trees}.(a), the double-lined dashed edge between two Hessenberg functions denotes a generator-containment sequence, described in detail in Section~\ref{subsec:special_containment_of_I_ideals}.

%\newpage

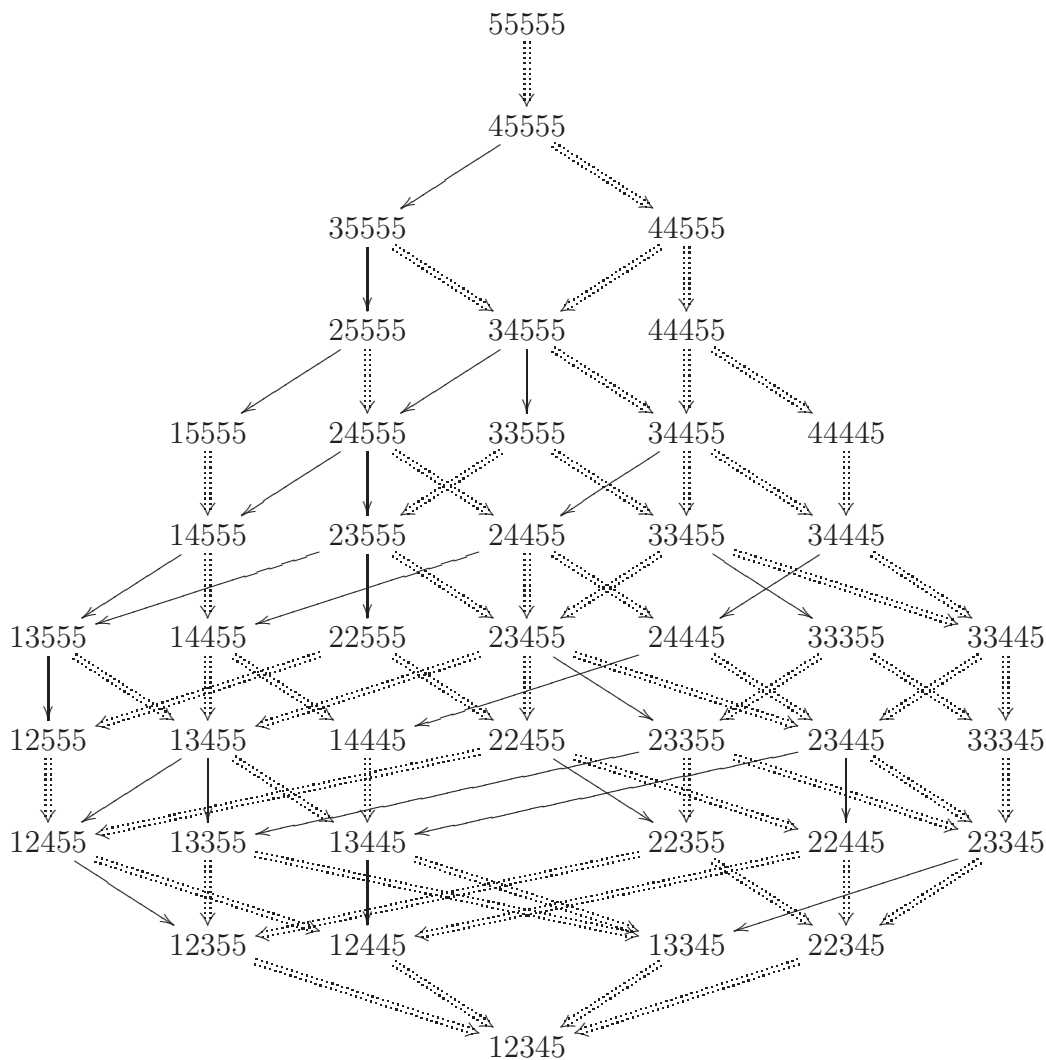
\begin{figure}[h]
$$
\xymatrix{
\\
\\
	&	&	& 55555\ar@{:>}[d]\\
	&	&	& 45555\ar[dl] \ar@{:>}[dr]\\
	&	& 35555\ar[d] \ar@{:>}[dr]	&	& 44555 \ar@{:>}[dl] \ar@{:>}[d]\\
	& 	& 25555\ar[dl] \ar@{:>}[d]	& 34555\ar[dl] \ar[d] \ar@{:>}[dr]	& 44455\ar@{:>}[d] \ar@{:>}[dr]\\
	& 15555\ar@{:>}[d] & 24555\ar[dl] \ar[d] \ar@{:>}[dr]	& 33555\ar@{:>}[dl] \ar@{:>}[dr] & 34455 \ar[dl] \ar@{:>}[d] \ar@{:>}[dr] & 44445\ar@{:>}[d]\\
	& 14555\ar[dl]\ar@{:>}[d] & 23555\ar[dll]\ar[d]\ar@{:>}[dr] & 24455\ar[dll]\ar@{:>}[d]\ar@{:>}[dr] & 33455\ar@{:>}[dl]\ar[dr]\ar@{:>}[drr] & 34445\ar[dl]\ar@{:>}[dr]\\
13555\ar[d]\ar@{:>}[dr]	& 14455\ar@{:>}[d]\ar@{:>}[dr] & 22555\ar@{:>}[dll]\ar@{:>}[dr] & 23455\ar@{:>}[dll]\ar@{:>}[d]\ar[dr]\ar@{:>}[drr] & 24445\ar[dll]\ar@{:>}[dr] & 33355\ar@{:>}[dl]\ar@{:>}[dr] & 33445\ar@{:>}[dl]\ar@{:>}[d]\\
12555\ar@{:>}[d] & 13455\ar[dl]\ar[d]\ar@{:>}[dr] & 14445\ar@{:>}[d] & 22455\ar@{:>}[dlll]\ar[dr]\ar@{:>}[drr] & 23355\ar[dlll]\ar@{:>}[d]\ar@{:>}[drr] & 23445\ar[dlll]\ar[d]\ar@{:>}[dr] & 33345\ar@{:>}[d]\\
12455\ar[dr]\ar@{:>}[drr] & 13355\ar@{:>}[d]\ar@{:>}[drrr] & 13445\ar[d]\ar@{:>}[drr] &	& 22355\ar@{:>}[dlll]\ar@{:>}[dr] & 22445\ar@{:>}[dlll]\ar@{:>}[d] & 23345\ar[dll]\ar@{:>}[dl]\\
	& 12355\ar@{:>}[drr] & 12445\ar@{:>}[dr] &	& 13345\ar@{:>}[dl] & 22345\ar@{:>}[dll]\\
	&	&	& 12345
}
$$
\caption{\label{fig:Hess_Fcn_Hasse_n=5} Hasse diagram on Hessenberg functions for $n=5$.}
\end{figure}

\newpage

\section{Galetto's proof of the minimality of the \texorpdfstring{$\IAD$}{I} generators}\label{app:minimal_gen_proof}

By considering the ideal $I^e$ generated by the generators of $\IAD$ over $\mathbb{Q} [x_1,\ldots,x_n]$ instead of over $\mathbb{Z} [x_1,\ldots,x_n]$, Galetto showed that every generating set of $I_h$ with $n$ generators is minimal.  The proof relies on some basic tools from commutative algebra, e.g. Matsumura's text~\cite{Matsu80}.

%   The \newtheorem code below is a hack job fix to make LaTeX restate the theorem from Section 4 whose proof we provide only here in the appendix.

\newtheorem*{thm:Galetto_proof}{Theorem~\ref{thm:Galetto_proof}}
\begin{thm:Galetto_proof}[Galetto~\cite{Galetto}]
Every set of $n$ elements that generates $I_h$ is a minimal generating set for $I_h$.
\end{thm:Galetto_proof}

\begin{proof}
Let $I^e$ be the ideal of $R=\mathbb{Q} [x_1,\ldots,x_n]$ generated by the generators of $\IAD$. Since the polynomial ring $R$ is Cohen-Macaulay, we have:
	\[\height (I^e) = \dim (R) - \dim (R/I^e)\]
where $\height$ denotes the height of an ideal and $\dim$ denotes the Krull dimension of a ring \cite[Theorem 31]{Matsu80}. The equalities $$\IAD = I_h = J_h$$ imply that the ideal $I^e$ is generated by the generators of $J_h$. Corollary~\ref{thm:dim(R/J)} then implies that the $\mathbb{Q}$-vector space $R/I^e$ is finite dimensional; therefore $\dim (R/I^e) = 0$~\cite[pg.~14-15]{Matsu80}. Since $\dim (R) = n$ we conclude $\height (I^e) = n$. The minimal number of generators of an ideal is  at least the height of the ideal \cite[Theorem 18]{Matsu80}. The ideal $I^e$ is generated by $n$ elements and has height $n$, so a set of $n$ generators is minimal in $\mathbb{Q} [x_1,\ldots,x_n]$.

Now suppose the generators of $\IAD \subset \mathbb{Z} [x_1,\ldots,x_n]$ are not minimal. This means we may write one generator in terms of the others. The same relation holds over $\mathbb{Q} [x_1,\ldots,x_n]$, contradicting the minimality of the generators of $I^e$.
\end{proof}

\end{appendix}

%\addcontentsline{toc}{section}{Acknowledgements} %%%% Aba added this line to have it included in the sidebar of the PDF that has the TOC
\begin{acknowledgements}
The authors thank Richard Stanley for suggesting the terminology of truncated symmetric functions, Fred Goodman for helpful conversations and comments on early versions of this paper, and Frank Sottile and Megumi Harada for useful feedback.  We also thank Federico Galetto for a thorough reading of an earlier version of this paper on the arXiv and his proof in Appendix~\ref{app:minimal_gen_proof}.
\end{acknowledgements}

%\addcontentsline{toc}{section}{References} %%%% Aba added this line to have it included in the sidebar of the PDF that has the TOC

\end{document}